\newtheorem{theorem}{Theorem}
\newtheorem{proposition}{Proposition}
\newtheorem{lemma}{Lemma}
\newtheorem{corollary}{Corollary}
\newtheorem{conj}{Conjecture}
\newcommand{\Z}{\mathbb{Z}}
\newcommand{\R}{\mathbb{R}}
\newcommand{\sym}{\mathrm{sym}}
\newcommand{\real}{\mathop{\rm Re}}
\newcommand{\sgn}{\mathop{\rm sgn}}
\newcommand{\ord}{\mathop{\rm ord}}
\title{Moments of $L'(1/2)$ in the Family of Quadratic Twists}
\author{Ian Petrow}
\date{}
\begin{document}

\maketitle

\begin{abstract}
We prove the asymptotic formulae for several moments of derivatives of $GL(2)$ $L$-functions over quadratic twists.  The family of $L$-functions we consider has root number fixed to $-1$ and odd orthogonal symmetry.  Assuming GRH we prove the asymptotic formulae for (1) the second moment with one secondary term, (2) the moment of two distinct modular forms $f$ and $g$ and (3) the first moment with controlled weight and level dependence.  We also include some immediate corollaries to elliptic curves via the modularity theorem and the work of Gross and Zagier.
\end{abstract}

\vspace{10pt}


The values of $L$-functions $L(s,f)$ at certain special half-integral points are of central importance in number theory, c.f. the Birch and Swinnerton-Dyer conjecture.  Analytic methods have been used successfully to study the behavior of these special values in some family of objects, but much remains unknown.  In this paper we study the central values of derivatives of $L$-functions of holomorphic $GL(2)$ modular forms in the family of quadratic twists.  The mean value of this family has been studied successfully in the past by several authors, notably Bump, Friedberg and Hoffstein \cite{BFH}, Murty and Murty \cite{MurtyMurty}, Iwaniec \cite{IwLvanishing} and Munshi \cite{MunshiMoments2}, \cite{MunshiMoments3}.

When $f \otimes \chi_d$ has even functional equation an asymptotic formula for the second moment of $L(1/2,f \otimes \chi_d)$ was computed assuming the generalized Riemann hypothesis (GRH) by Soundararajan and Young \cite{SoundYoung}.  Here, we apply their techniques to several moment problems of comparable difficulty when the sign of the functional equation is $-1$ and the derivative $L'(1/2,f\otimes \chi_d)$ is the correct object of study.  The family of quadratic twists with root number $+1$ as considered by Soundararajan and Young has even orthogonal symmetry in the sense of random matrix theory, while the family we consider has root number $-1$ and odd orthogonal symmetry.  Surprisingly, we find that stronger results are possible in the odd case: the analogues of theorems \ref{thm2} and \ref{thm3} of are out of reach when the root number of $f \otimes \chi_d$ is $1$ and one studies the $L$-functions themselves.  As in Soundararajan and Young, our work is conditional on GRH, but we only use this hypothesis to obtain a useful upper bound to the corresponding un-differentiated moment problem, see conjectures \ref{conjecture1} and \ref{conjecture2}.  The deduction of the necessary upper bounds from GRH is due to Soundararajan \cite{SoundMoments}.  We restrict our attention to holomorphic forms in this paper, but our results should carry over to Maass forms with only minor modifications to the proofs.  

Before stating our results, let us fix some notation and recall some standard facts which can be found in chapter 14 of \cite{IK}.  We consider the space of cuspidal holomorphic modular forms of even weight $\kappa$ on the congruence subgroup $\Gamma_0(N)$ with trivial central character.  Such forms have a Fourier expansion of the form \[f(z)=\sum_{n\geq 1}\lambda_f(n)n^{(\kappa-1)/2}\exp(2 \pi i z).\] We fix a basis of newforms which are eigenfunctions of the Hecke operators and have $\lambda_f(1)=1$.  From now on, we assume all forms $f$ which we work with are elements of this basis.  The Hecke eigenvalues of $f$ are all real (by  the adjointness formula and multiplicity one principle), and hence $f$ is self-dual.  We study the family of twists of $f$ by quadratic characters.  Let $d$ be a fundamental discriminant relatively prime to $N$, and let $\chi_d(\cdot) = \left(\frac{d}{\cdot}\right)$ denote the primitive quadratic character of conductor $|d|$.  Then $f\otimes \chi_d$ is a newform on $\Gamma_0(N|d|^2)$ and the twisted $L$-function is defined for $\real(s)>1$ by \[L(s,f\otimes \chi_d) := \sum_{n \geq 1} \frac{\lambda_f(n)}{n^s} \chi_d(n)= \prod_{p\nmid Nd} \left(1-\frac{\lambda_f(p)\chi_d(p)}{p^s}+\frac{1}{p^{2s}}\right)^{-1} \prod_{p\mid N} \left(1-\frac{\lambda_f(p)\chi_d(p)}{p^s}\right)^{-1}.\]   The completed $L$-function is defined by \[ \Lambda(s,f \otimes \chi_d) := \left(\frac{|d|\sqrt{N}}{2 \pi}\right)^s\Gamma\left(s+\frac{\kappa-1}{2}\right)L(s,f \otimes \chi_d).\] It has the functional equation \[\Lambda(s,f\otimes \chi_d) = i^\kappa \eta \chi_d(-N) \Lambda(1-s, f \otimes \chi_d),\] where $\eta$ is given by the eigenvalue of the Fricke involution, which is independent of $d$ and always $\pm 1$.  We denote the root number by $w(f \otimes \chi_d):= i^\kappa \eta \chi_d(-N)$.  Note that if $d$ is a fundamental discriminant, then $\chi_d(-1) = \pm1$ depending as whether $d$ is positive or negative.   In this paper we work with positive discriminants so that $\chi_d(-N)=\chi_d(N)$, but we could just as easily formulate our results with negative discriminants.  

We are interested here in the derivative of the $L$-function, which also has a Dirichlet series convergent in a right half-plane: \[L'(s,f\otimes \chi_d) = -\sum_{n=1}^\infty\frac{\lambda_f(n)\chi_d(n) \log n}{n^s}.\] It also has a functional equation \[\Lambda'(s, f\otimes \chi_d) = -  i^\kappa \eta \chi_d(-N) \Lambda'(1-s, f \otimes \chi_d)\] with sign opposite to that of $L(s,f\otimes \chi_d)$.  When $w(f\otimes \chi_d) = -1,$ one has that $L(1/2,f\otimes \chi_d)=0$ and $L'(1/2,f \otimes \chi_d)$ is the more appropriate object for study.  

I would like to acknowledge the support of the number theory community at Stanford, and I would especially like to thank Professor Soundararajan for many fruitful discussions.

\section{Statement of Main Results}

In the results of this section we assume the generalized Riemann hypothesis (GRH) for the zeta function, the family of quadratic twists of $f$ and $g$ and the symmetric square of $f$ and $g$.  See also the comments immediately before and after conjectures \ref{conjecture1} and \ref{conjecture2}, below.  We use the notations $(d,\square)=1$ or $ \mathcal{D}$ to denote the sets of square-free integers or fundamental discriminants, respectively.  Let $F:\mathbb{R}_{\geq 0}\rightarrow \mathbb{R}_{\geq 0}$ be a fixed smooth function with compact support closely resembling the indicator function of the interval $[0,1]$, and let $\widetilde{F}(s)=\int_0^\infty F(x)x^{s-1}\,dx$ denote its Mellin transform.  We formulate our results for the subset of $\mathcal{D}$ of integers which are $4$ times a $2 \mod 4$ squarefree integer, but could have just as well picked out the other congruence classes which together constitute $\mathcal{D}$.  The subscripts on the symbols $O$ and $\ll$ indicate that the implied constants depend only on those parameters.  
\begin{theorem}\label{thm1}  Assume GRH, and let $F(\cdot)$ be a smooth approximation to the indicator function of $[0,1]$ with compact support.  For any  normalized cuspidal Hecke newform $f$ with trivial central character, odd level $N$ and even weight $\kappa$ we have \begin{equation*}\begin{split} \sum_{\substack{(d,2 N\square)=1 \\ w(f \otimes \chi_{8d})=-1}} L'(1/2,f\otimes \chi_{8d})^2F(8d/X) = \frac{X}{\pi^2}  L(1, \sym^2 f)^3 Z^*(0,0)\widetilde{F}(1) \left(\frac{1}{3} \log^3 X  + C_2(f) \log^2 X\right) \\  + O_{\kappa,N,\varepsilon}\left(X (\log X)^{1+\varepsilon}\right).\end{split}\end{equation*} In the above \[C_2(f) = \frac{\Gamma'(\kappa/2)}{\Gamma(\kappa/2)}+ \log \frac{\sqrt{N}}{2\pi} + \gamma +3\frac{L'(1,\sym^2 f)}{L(1,\sym^2 f)}+\frac{\frac{d}{du}Z^*(u,0)\vert_{u=0}}{Z^*(0,0)} + \frac{\widetilde{F}'(1)}{\widetilde{F}(1)},\]  $\gamma$ is Euler's constant, and $Z^*(u,v)$ is a holomorphic function defined by \eqref{prod1} and \eqref{Z1} for $\real(u),\real(v) > -1/4+\varepsilon$ given by a sum of two absolutely convergent Euler products and is uniformly bounded in $u,v$ where it converges. Moreover, $Z^*(0,0) =0$ if and only if the root number $w(f) = 1$ and $N$ is square, in which case the moment vanishes identically.  \end{theorem}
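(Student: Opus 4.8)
I would follow the Soundararajan--Young method, treating the derivative directly. Since $w(f\otimes\chi_{8d})=-1$ forces $\Lambda(s,f\otimes\chi_{8d})=-\Lambda(1-s,f\otimes\chi_{8d})$, one has $L(1/2,f\otimes\chi_{8d})=0$ while $s\mapsto\Lambda'(1/2+s,f\otimes\chi_{8d})$ is even. Shifting the contour in $\frac{1}{2\pi i}\int_{(2)}\Lambda'(1/2+s,f\otimes\chi_{8d})^2\,\frac{G(s)}{s}\,ds$ for a fixed even, entire, rapidly decaying $G$ past the simple pole at $s=0$, and then dividing by $\bigl(|8d|\sqrt N/(2\pi)\bigr)\Gamma(\kappa/2)^2$, produces an exact identity
\[
L'(1/2,f\otimes\chi_{8d})^2 \;=\; 2\sum_{n\geq1}\frac{\chi_{8d}(n)}{\sqrt n}\;\omega_d(n),
\]
in which $\omega_d(n)$ is a smooth weight, negligible once $n\gg|d|^2N(\log X)^{O(1)}$ and close to $c_n(0)$ below that range, built from a Mellin integral of $c_n(s)=\sum_{n_1n_2=n}\lambda_f(n_1)\lambda_f(n_2)\bigl(L_s-\log n_1\bigr)\bigl(L_s-\log n_2\bigr)$ with $L_s=\log\bigl(|8d|\sqrt N/(2\pi)\bigr)+\Gamma'(\kappa/2+s)/\Gamma(\kappa/2+s)$. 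The two ``extra'' logarithms in $c_n$ record the two differentiations; together with the length of the $n$-sum they are what produce the cubic polynomial in $\log X$.

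Next I would sum over the admissible discriminants $8d$ with $(d,2N\square)=1$, weight by $F(8d/X)$, and insert the root-number condition as $\mathbbm{1}[w(f\otimes\chi_{8d})=-1]=\tfrac12\bigl(1-w(f)\chi_{8d}(N)\bigr)$ with $w(f)=i^{\kappa}\eta$; this is legitimate precisely because the identity above holds for those $d$ with $w(f\otimes\chi_{8d})=-1$. This splits the moment into a ``principal'' sum, with inner $d$-sum $\sum_d\omega_d(n)\chi_{8d}(n)F(8d/X)$, and a ``twisted'' sum, with inner $d$-sum $\sum_d\omega_d(n)\chi_{8d}(nN)F(8d/X)$. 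Detecting the squarefree condition by M\"obius, applying quadratic reciprocity, and Poisson summing each inner sum over $d$ modulo a suitable multiple of $8N$, the zero frequency survives only when the character argument is a perfect square --- $n=\square$ in the principal sum, and $nN=\square$ (i.e.\ $n$ ranging over $N_0$ times the squares, $N_0$ the squarefree kernel of $N$) in the twisted one. These zero-frequency terms are the diagonal main terms; every nonzero frequency contributes a Gauss-sum-weighted dual sum.

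Controlling these dual sums is the main obstacle, and it is of the same order of difficulty as in Soundararajan--Young: opening the surviving $L$-factors and Poisson summing once more in $d$ reduces matters to an \emph{un-differentiated} second moment of $L(1/2+it,f\otimes\chi_{8d})$ over the family, and here the GRH-conditional upper bounds of Soundararajan --- recorded as Conjectures \ref{conjecture1} and \ref{conjecture2} --- are exactly what is needed to beat the trivial estimate and keep the total error at $O_{\kappa,N,\varepsilon}(X(\log X)^{1+\varepsilon})$. This is the only use of GRH; the differentiated problem is slightly more forgiving because $\omega_d$ decays faster. For the diagonal, one writes the main term as an iterated contour integral: summing the relevant Euler factors of $c_{\ell^2}(s)$ (resp.\ $c_{N_0\ell^2}(s)$) over $\ell$ factors as $\zeta(1+2s+u+v)\,L(1+2s+2u,\sym^2 f)\,L(1+2s+2v,\sym^2 f)\,L(1+2s+u+v,\sym^2 f)$ times the absolutely convergent Euler product defining $Z(u,v)$ in \eqref{prod1}--\eqref{Z1}, with $u,v$ the two variables carrying the logarithms and $s$ the Mellin variable. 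Moving the $u,v,s$ contours to the left and collecting the residue at the origin yields $\frac{X}{\pi^2}L(1,\sym^2 f)^3 Z^*(0,0)\widetilde F(1)$ times a polynomial of degree three in $\log X$, and the usual Laurent-coefficient bookkeeping identifies the leading coefficient $\tfrac13$ together with the subleading coefficient $C_2(f)$, built from the logarithmic derivatives of $\zeta$, $\Gamma(\kappa/2)$, $L(1,\sym^2 f)$, $Z^*$ and $\widetilde F$ exactly as displayed.

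Finally, the vanishing criterion is read off from the explicit form of $Z^*$, which is a sum of the two absolutely convergent Euler products coming from the principal and twisted sums. At every prime $p\nmid 2N$ the local factors are of the shape $1+O(p^{-2})$ and positive, so $Z^*(0,0)$ can vanish only through cancellation between the two products. If $N$ is a perfect square then $\chi_{8d}(N)=1$ for every admissible $d$, the twisted sum coincides with the principal one, and one finds that $Z^*(0,0)$ equals $\bigl(1-w(f)\bigr)$ times a convergent product of positive local densities; hence $Z^*(0,0)=0$ exactly when $w(f)=1$. In that case $w(f\otimes\chi_{8d})=i^{\kappa}\eta\,\chi_{8d}(N)=w(f)=1$ for every admissible $d$, so the summation condition is never satisfied and the moment is identically $0$, in agreement with the vanishing of the main term. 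Conversely, if $N$ is not a perfect square (or $N$ is a perfect square but $w(f)=-1$), then $\{d:w(f\otimes\chi_{8d})=-1\}$ has positive density; $Z^*(0,0)=0$ would then make the entire main term vanish, forcing $\sum_d L'(1/2,f\otimes\chi_{8d})^2F(8d/X)=O_{\kappa,N,\varepsilon}(X(\log X)^{1+\varepsilon})$, which contradicts the lower bound $\gg_{f,N}X(\log X)^2$ that follows from Theorem \ref{thm3} and Cauchy--Schwarz (with $F\geq0$ and $\sum_d F(8d/X)\ll_N X$); alternatively one rules this out by evaluating the local factors of $Z^*$ at $(0,0)$ directly. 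Hence $Z^*(0,0)=0$ if and only if $w(f)=1$ and $N$ is a perfect square.
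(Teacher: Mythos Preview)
Your approach differs from the paper's in a central structural way. You propose to write an approximate functional equation for $L'(1/2,f\otimes\chi_{8d})^2$ directly, Poisson-sum over $d$, take the zero frequency as the diagonal, and bound the nonzero frequencies via Conjecture~\ref{conjecture1}. The paper instead truncates the approximate functional equation for a single $L'$ at height $U=X/(\log X)^{100}$, writing $L'=\mathcal{A}_U+\mathcal{B}_U$, and uses the algebraic identity $L'^2=2L'\mathcal{A}_U-\mathcal{A}_U^2+\mathcal{B}_U^2$ to split the moment into three pieces $\mathrm{I}_U,\mathrm{II}_U,\mathrm{III}_U$. The point is that in $\mathrm{I}_U$ and $\mathrm{II}_U$ at least one factor is short, so Proposition~\ref{prop1} applies with $U_1U_2\le X^2/(\log X)^{100}$ and its error $(U_1U_2)^{1/4}X^{1/2}(\log X)^{11}$ is genuinely $O(X)$; the tail $\mathrm{III}_U=\sum_d\mathcal{B}_U^2$ is handled by the key observation that $\mathcal{B}_U$ is a contour integral of the \emph{un-differentiated} $L(1/2+s,f\otimes\chi_{8d})$ against the kernel $((8d)^s-U^s)/s$, which is uniformly $\ll\log(X/U)\ll\log\log X$ on $\Re(s)=1/\log X$. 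Conjecture~\ref{conjecture1} then gives $\mathrm{III}_U\ll X(\log X)^{1+\varepsilon}$ directly. The paper stresses that, in contrast to Soundararajan--Young, shifted-moment estimates are \emph{not} needed.

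Your direct route is in the spirit of Soundararajan--Young's original argument and can plausibly be pushed through, but the step you summarize as ``of the same order of difficulty as in Soundararajan--Young'' is exactly where they need the more delicate shifted-moment bounds, not merely Conjecture~\ref{conjecture1}. Applying Proposition~\ref{prop1} (which \emph{is} their Proposition~3.1) with both lengths $U_1,U_2\asymp X$ gives an off-diagonal error of $X(\log X)^{11}$, which swamps everything; getting below this requires their full machinery, which you do not invoke. So your claim that Conjecture~\ref{conjecture1} alone yields error $O(X(\log X)^{1+\varepsilon})$ in the direct scheme is unjustified as written, and you have missed the paper's main technical device. (Your remark that ``$\omega_d$ decays faster'' is morally the same phenomenon the paper exploits---the extra $1/u$ in the Mellin kernel for $W$---but the paper cashes it in via truncation rather than inside the dual-sum analysis.) Minor point: ``Poisson summing once more in $d$'' is not what happens---after the first Poisson summation the dual sum is over the frequency variable, and it is bounded by reassembling it as twisted $L$-values. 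Your discussion of the vanishing criterion for $Z^*(0,0)$ is fine; the paper argues by direct inspection of the Euler factors in \eqref{prod1}--\eqref{Z1}, and your alternative via Theorem~\ref{thm3} and Cauchy--Schwarz is also legitimate.
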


By the celebrated theorem of Gross and Zagier \cite{GrossZagier}, theorem \ref{thm1} also gives the variance of canonical heights of Heegner points on an elliptic curve associated with $f$.  Note that the analogue of theorem \ref{thm1} without the derivative is the main result of Soundararajan and Young \cite{SoundYoung}.  In this paper, we compute the main terms in a slightly different manner than do Soundararajan and Young, and applying our technique to the second moment without derivatives improves the error term there to $\ll_{\kappa,\varepsilon} X(\log X)^{1/2+\varepsilon}$.  Nonetheless, shifted moments are still crucial to the theorem of Soundararajan and Young, whereas they are not necessary here.

The next theorem is a moment for two distinct modular forms $f$ and $g$.  Theorem \ref{thm2} is particularly interesting because the asymptotic formula for the analogous moment without derivatives is completely out of reach by current techniques. \begin{theorem}\label{thm2}Assume GRH, and let $F(\cdot)$ be a smooth approximation to the indicator function of $[0,1]$ with compact support.  For any two distinct normalized cuspidal Hecke newforms $f$ and $g$ with trivial central characters, odd levels $N_1$ and $N_2$, and even weights $\kappa_1$ and $\kappa_2$ we have
\[\sum_{\substack{(d,2 N_1N_2\square)=1 \\ w(f \otimes \chi_{8d})=-1 \\ w(g \otimes \chi_{8d})=-1}}  L'(1/2,f\otimes \chi_{8d})L'(1/2,g \otimes \chi_{8d})F(8d/X) = C(f,g)X \log^2 X + O_{ f,g, \varepsilon}\left(X (\log X)^{1+\varepsilon}\right) .\] In the above \[ C(f,g) = \frac{1 }{2\pi^2}  L(1,\sym^2 f)L(1,\sym^2 g)L(1,f \otimes g)Z^*(0,0)\widetilde{F}(1),\] where $Z^*(u,v)$ is a holomorphic function defined by \eqref{prod2} and \eqref{Z2} in $\real(u),\real(v) \geq -1/4+\varepsilon$, depending on $f$ and $g$, given by a sum of four absolutely convergent Euler products and uniformly bounded in $u,v$ where it converges.  Moreover, $Z^*(0,0) =0$ if and only if either the root number $w(f) = 1$ and $N_1$ is square or the root number $w(g)=1$ and $N_2$ is square. In either of these two cases the moment vanishes identically. \end{theorem}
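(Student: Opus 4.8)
The plan is to adapt the method of Soundararajan and Young \cite{SoundYoung}, carrying through the argument the logarithmic weights produced by the two derivatives. Since $w(f\otimes\chi_{8d})=w(g\otimes\chi_{8d})=-1$, both completed $L$-functions vanish at $s=1/2$, and each $\Lambda'(s,\cdot)$ satisfies a functional equation with sign $+1$. Writing $\gamma_f(s)=(8|d|\sqrt{N_1}/2\pi)^s\,\Gamma(s+(\kappa_1-1)/2)$ and likewise $\gamma_g$, one contour shift followed by the functional equation gives the clean identity
\[ L'(1/2,f\otimes\chi_{8d})=2\sum_{m\geq1}\frac{\lambda_f(m)\chi_{8d}(m)}{\sqrt m}\,\omega_f(m),\qquad \omega_f(m)=\frac1{2\pi i}\int_{(2)}\frac{\gamma_f(1/2+s)}{\gamma_f(1/2)}\,\frac{m^{-s}}{s^2}\,ds, \]
and analogously for $g$, where $\omega_f(m)=\log\!\bigl(8|d|\sqrt{N_1}/(2\pi m)\bigr)+\Gamma'(\kappa_1/2)/\Gamma(\kappa_1/2)+O_{\kappa,A}\!\bigl((m/|d|)^{A}\bigr)$ for $m\ll|d|$ and $\omega_f(m)\ll_{\kappa,A}(|d|/m)^{A}$ otherwise. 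Multiplying the two expansions,
\[ L'(1/2,f\otimes\chi_{8d})L'(1/2,g\otimes\chi_{8d})=4\sum_{m,n\geq1}\frac{\lambda_f(m)\lambda_g(n)}{\sqrt{mn}}\,\chi_{8d}(mn)\,\omega_f(m)\,\omega_g(n), \]
an essentially finite double sum over $m,n\ll_\varepsilon|d|^{1+\varepsilon}$ whose weight grows polynomially of degree one in $\log m$ and in $\log n$ --- this is the ultimate source of the $\log^2X$. Since $8d>0$ and $\chi_{8d}(N_i)=\bigl(\tfrac{8d}{N_i}\bigr)$ depends only on $d$ modulo $8N_i$, the two root-number conditions confine $d$ to a fixed union $\mathcal R$ of residue classes modulo $8N_1N_2$; summing the last display against $F(8d/X)$ over squarefree $d\in\mathcal R$ with $(d,2N_1N_2)=1$ reproduces the left-hand side of the theorem.

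I would then interchange summation, bringing the sum over squarefree $d\in\mathcal R$ coprime to $2N_1N_2mn$ inside, and split $\sum_{m,n}$ into the diagonal $mn=\square$ and the off-diagonal $mn\neq\square$. On the diagonal $\chi_{8d}(mn)=\mathbf 1[(mn,2d)=1]$, and the inner sum over $d$ equals $\tfrac18 X\widetilde F(1)$ times an absolutely convergent Euler product in $mn$ (the local squarefree densities), plus a smaller term. Reinstating the Mellin integrals for $\omega_f,\omega_g$ turns the remaining $m,n$-sum into a double contour integral in variables $u,v$ whose arithmetic factor is $\sum_{mn=\square}\lambda_f(m)\lambda_g(n)(\cdots)\,m^{-1/2-u}n^{-1/2-v}$. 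Expanding the local factors, using $\lambda_f(p^2)=\lambda_{\sym^2f}(p)$ and $\lambda_f(p)\lambda_g(p)=\lambda_{f\otimes g}(p)$, and treating separately the primes dividing $N_1$, dividing $N_2$, dividing both, or dividing neither, this factors as $L(1+2u,\sym^2f)\,L(1+2v,\sym^2g)\,L(1+u+v,f\otimes g)\,Z^*(u,v)$, with $Z^*$ the sum of four absolutely convergent Euler products of \eqref{prod2} and \eqref{Z2}, holomorphic and uniformly bounded in $\real u,\real v>-1/4+\varepsilon$. Moving the $u,v$-contours to the origin: here the hypothesis $f\neq g$ is essential, since it forces $L(s,f\otimes g)$ to be entire, so all three $L$-factors are holomorphic at the relevant points and the only singularities are the order-two poles at $u=v=0$ coming from the $s^{-2}$ and $w^{-2}$ in $\omega_f,\omega_g$; collecting the residue against the factor $X^{1+u+v}$ produced by the $d$-sum yields $C(f,g)X\log^2X$ with $C(f,g)$ as stated, the terms of orders $X\log X$ and $X$ being absorbed into the error. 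The vanishing assertion is then immediate: if $N_1$ is a square, then $\chi_{8d}(N_1)=1$ for every $d$ coprime to $N_1$, so $w(f\otimes\chi_{8d})=w(f)$ identically; if in addition $w(f)=1$ the summation condition is never satisfied, the sum is empty, and correspondingly the Euler factor of $Z^*$ at the primes dividing $N_1$ vanishes at $u=v=0$ (and symmetrically for $g$).

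The off-diagonal $mn\neq\square$ carries the real difficulty. For such $m,n$ I would apply Poisson summation in $d$ to $\sum_d\chi_{8d}(mn)F(8d/X)$ over squarefree $d\in\mathcal R$, handling the squarefree sieve via $\mu^2(d)=\sum_{\ell^2\mid d}\mu(\ell)$ exactly as in \cite{SoundYoung}; the frequency zero vanishes because $mn$ is not a square, leaving a dual sum over $k\neq0$ with Gauss-sum coefficients $G_k(mn)$ and a rapidly decaying weight, of effective length $k\ll_\varepsilon(mn)^{1+\varepsilon}/X$. Reassembling, the off-diagonal contribution is at most $\sum_{k\neq0}(\cdots)\bigl|\sum_{m,n}\lambda_f(m)\lambda_g(n)(mn)^{-1/2}G_k(mn)\,\omega_f(m)\omega_g(n)\bigr|$; opening $G_k(mn)$ by multiplicativity, the inner bilinear form is, up to harmless factors and small complex shifts, the quantity for the quadratic twist by $k$ attached to the \emph{un-differentiated} mixed second moment $\sum_k L(1/2,f\otimes\chi_k)L(1/2,g\otimes\chi_k)F(\cdot)$, which is $\ll_\varepsilon X(\log X)^{O(1)}$ by Conjecture \ref{conjecture2} --- that is, by GRH through Soundararajan's upper-bound method for moments \cite{SoundMoments}. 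The main obstacle is precisely here: the log-weights $\omega_f,\omega_g$ make these dual sums heavier than in the undifferentiated problem, and the reduction must be organized so that only the GRH-provable upper bound, and no asymptotic, is needed, uniformly over the primes dividing $N_1N_2$. Once that is arranged, summing over $k$ bounds the total off-diagonal contribution by $O_{f,g,\varepsilon}(X(\log X)^{1+\varepsilon})$, which finishes the proof.
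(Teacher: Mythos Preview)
Your approach has a genuine gap at exactly the point you flag as ``the main obstacle.'' If you expand both derivatives by the full approximate functional equation (length $\asymp|d|$) and apply Poisson summation in $d$ directly, you are in the ``deadlock'' situation: the dual sum in $k$ is of the same size as the original, and the Mellin kernels $1/s^2$ coming from $\omega_f,\omega_g$ each contribute a factor $\asymp\log X$ when integrated over $\Re s=1/\log X$. Even granting the GRH upper bound $\sum_{|k|\le K}|L(1/2+it,f\otimes\chi_k)L(1/2+it,g\otimes\chi_k)|\ll K(\log K)^{1+\varepsilon}$ (which is Conjecture~\ref{conjecture1} via Cauchy--Schwarz, not Conjecture~\ref{conjecture2} as you cite), the off-diagonal is then only $\ll X(\log X)^{3+\varepsilon}$, swamping the main term $C(f,g)X\log^2 X$. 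Concretely, in the paper's Proposition~\ref{prop2} the error is $(U_1U_2)^{1/4}X^{1/2}(\log X)^{11}$, which at full length $U_1=U_2\asymp X$ is $X(\log X)^{11}$; no amount of bookkeeping removes the deficit without a new idea.

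The idea you are missing is the paper's truncation. One writes $L'(1/2,f\otimes\chi_{8d})=\mathcal{A}_U+\mathcal{B}_U$ with $U=X/(\log X)^{100}$, decomposes the product into $\mathrm{I}_U(f,g)+\mathrm{I}_U(g,f)-\mathrm{II}_U(f,g)+\mathrm{III}_U(f,g)$, and applies Proposition~\ref{prop2} to the first three; since at least one of $U_1,U_2$ equals $U$ in each, the Poisson error is $(UX)^{1/4}X^{1/2}(\log X)^{11}\ll X(\log X)^{-14}$, i.e.\ a genuine power-of-log saving. The cross term $\mathrm{III}_U(f,g)=\sum_d\mathcal{B}_U(f)\mathcal{B}_U(g)$ is where GRH enters: the crucial observation is that on $\Re s=1/\log X$ one has $\bigl|((8d)^s-U^s)/s\bigr|\ll\log(8d/U)\ll\log\log X$, so the logarithmic weight in the tail is \emph{small}, and $\mathcal{B}_U$ behaves like the undifferentiated partial sum. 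Cauchy--Schwarz then reduces $\mathrm{III}_U(f,g)$ to $\mathrm{III}_U(f)^{1/2}\mathrm{III}_U(g)^{1/2}\ll X(\log X)^{1+\varepsilon}$ by Conjecture~\ref{conjecture1}. This is precisely the mechanism that converts the derivative problem into the undifferentiated one \emph{only on the tail}, where the log-weight does no damage; your proposal tries to make this conversion globally, which cannot work. (A minor point: the paper detects the root-number conditions by expanding $(1-i^{\kappa_1}\eta_f\chi_{8d}(N_1))(1-i^{\kappa_2}\eta_g\chi_{8d}(N_2))$ rather than restricting $d$ to residue classes; this is what produces the four sums $S_{f,g}(N',h)$ and the four Euler products in $Z^*$.)
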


Lastly, theorem \ref{thm3} below is a first moment in the twist aspect with controlled dependence on both the weight $\kappa$ and level $N$.  Again, the analogue of theorem \ref{thm3} without the derivative is completely out of reach, but would have interesting corollaries, see \cite{LiuYoung}.  \begin{theorem}\label{thm3} Assume GRH, and let $F(\cdot)$ be a smooth approximation to the indicator function of $[0,1]$ with compact support.  For any $A>0$ and any  normalized cuspidal Hecke newform $f$ with trivial central character, odd level $N$ and even weight $\kappa$ we have \begin{equation*}\begin{split}  \sum_{\substack{(d,2 N\square)=1 \\ w(f \otimes \chi_{8d})=-1}} L'(1/2,f\otimes \chi_{8d})F(8d/X)  =   C_3(f)X \left(\log \frac{X\kappa \sqrt{N}}{2\pi} + 2\frac{L'(1,\sym^2 f)}{L(1,\sym^2 f)}+ \frac{Z^{* '}(0)}{Z^*(0)} \right) \\ + O_{A,\varepsilon}\left( X  (\log X \kappa N)^{1/4+\varepsilon} +  \frac{X^{13/17} (\kappa N)^{4/17} }{ (\log X\kappa N)^{A}} \right), \end{split}\end{equation*} In the above \[ C_3(f) = \frac{\widetilde{F}(1) }{2 \pi^2} L(1, \sym^2 f) Z^*(0)\] and $Z^*(u)$ is a holomorphic function defined by \eqref{prod3} and \eqref{Z3} as a sum of two absolutely convergent Euler products for $\real(u)>-1/4+\varepsilon$.  Moreover, $Z^*(0)=0$ if and only if the root number $w(f)=1$ and $N$ is a square.  If so, then the moment vanishes identically, and if not \[  Z^*(0) \gg \frac{\log \log N}{(\log N)^{1/2}}, \] uniformly in $\kappa$.   \end{theorem}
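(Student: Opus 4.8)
The plan is to adapt the method of Soundararajan and Young to the derivative $L'(1/2)$ and to the root-number $-1$ subfamily, while tracking all dependence on the weight $\kappa$ and level $N$. On this subfamily the sign in the functional equation $\Lambda'(s,f\otimes\chi_{8d}) = -i^{\kappa}\eta\,\chi_{8d}(-N)\,\Lambda'(1-s,f\otimes\chi_{8d})$ is $+1$, so the two halves of the approximate functional equation coincide and one has
\[
L'(1/2,f\otimes\chi_{8d}) = 2\sum_{n\ge 1}\frac{\lambda_f(n)\chi_{8d}(n)}{\sqrt n}\,W\!\left(\frac{n}{Q_d}\right),\qquad Q_d\asymp |d|\,\sqrt N\,\kappa,
\]
where $W$ is smooth, rapidly decaying at infinity, satisfies $W(y)=O(\log(2/y))$ as $y\to 0^+$ (the logarithm arising from the derivative), and has all derivatives bounded uniformly in $\kappa$ and $N$ through the explicit $\Gamma$-factor. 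I would write the root-number condition as $\tfrac12\bigl(1-w_0\,\chi_{8d}(N)\bigr)$, with $w_0=i^{\kappa}\eta$ depending only on $f$; this expression is identically $0$ over admissible $d$ precisely when $w_0=1$ and $N$ is a square, which forces the sum, hence the main term and $Z^*(0)$, to vanish in that case.

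Next I would execute the $d$-sum. Sieving the fundamental-discriminant condition by a M\"obius convolution and applying Poisson summation in $d$ along residue classes --- exactly in the shape of Soundararajan's evaluation of the quadratic character sums $\sum_d \chi_{8d}(n)\,\Phi(d/X)$ --- splits off the perfect-square values $n=\ell^2$, which produce the main term, from the non-square $n$, which produce dual sums to be estimated. Collecting the square terms gives a main term of the shape $X\,\widetilde F(1)\sum_{\ell}\ell^{-1}\lambda_f(\ell^2)\,a_N(\ell)\,W(\ell^2/Q_d)$ plus its $\chi_{8d}(N)$-twist, up to an absolute constant, where $a_N$ is an explicit multiplicative density. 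Since $\sum_{\ell}\lambda_f(\ell^2)\ell^{-s}$ equals $L(s,\sym^2 f)/\zeta(2s)$ up to ramified Euler factors and is \emph{holomorphic} at $s=1$, opening $W$ by Mellin inversion produces a contour integral in which the only pole near the line of integration is the double pole at $s=0$ coming from the logarithmic part of $W$; evaluating its residue gives a single power of $\log X$ --- rather than two --- and produces the constant $C_3(f)$ together with the factor $\log\frac{X\kappa\sqrt N}{2\pi}+2\frac{L'(1,\sym^2 f)}{L(1,\sym^2 f)}+\frac{Z^{*\prime}(0)}{Z^*(0)}$. Here $Z^*$ is the Euler product that absorbs $a_N$, the local factors at $2$, and the ramified factors at $p\mid N$, into which the $\chi_{8d}(N)$-twist and the Atkin--Lehner relations $|\lambda_f(p)|=p^{-1/2}$ at $p\,\|\,N$ enter, contributing a correction smaller by a power of $N$.

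The principal obstacle is the error term with its explicit $\kappa N$-dependence. The dual sums arising from the non-square $n$ have length governed by the analytic conductor, so a trivial bound is far too large; the needed savings come from pairing these sums, via Cauchy--Schwarz, against the upper bound for the undifferentiated second moment furnished by Conjectures \ref{conjecture1} and \ref{conjecture2} (Soundararajan's deduction from GRH), which trades a power of the conductor for a factor $(\log X\kappa N)^{-A}$ for any $A>0$. Balancing this against the smooth-truncation and sieve errors in the main-term extraction yields the two displayed error terms $X(\log X\kappa N)^{1/4+\varepsilon}$ and $X^{13/17}(\kappa N)^{4/17}(\log X\kappa N)^{-A}$. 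Finally, for the lower bound I would write $Z^*(0)$ as an Euler product, factor out an absolutely convergent part that is bounded above and below by absolute constants, and estimate the finitely many ramified Euler factors at $p\mid 2N$ from below using the Atkin--Lehner values and a Mertens-type estimate for the resulting product over $p\mid N$; combined with the standard lower bound for $L(1,\sym^2 f)$ this gives $Z^*(0)\gg \log\log N/(\log N)^{1/2}$ uniformly in $\kappa$, while the stated vanishing criterion is exactly the one already noted above.
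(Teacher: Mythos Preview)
Your outline has the right architecture --- approximate functional equation, detection of the root-number condition via $\tfrac12(1-w_0\chi_{8d}(N))$, Poisson summation in $d$, the diagonal $n=\square$ terms producing the main term through a double pole, and dual sums controlled by the GRH-conditional moment bounds --- and your treatment of the main term and of the vanishing criterion for $Z^*(0)$ is essentially correct. But the proposal is missing precisely the idea that is new in this paper and without which the stated error terms cannot be obtained.

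The missing step is an \emph{artificial} truncation of the approximate functional equation at a height $U = X/(\log X\kappa N)^{C}$, splitting $L'(1/2,f\otimes\chi_{8d}) = \mathcal{A}_U + \mathcal{B}_U$ into a short main piece and a tail. If instead you run Poisson summation on the full-length sum as you propose, the dual-sum error one gets (this is exactly what Proposition~\ref{FirstMomentCalculation} produces with $U$ at its natural size) is $X^{9/17}(U\kappa N)^{4/17}(\log X\kappa N)^{6}$, which for $U\asymp X$ is $X^{13/17}(\kappa N)^{4/17}(\log X\kappa N)^{6}$. When $\kappa N$ is close to $X$ this \emph{exceeds} the main term $\asymp X\log X$, so no asymptotic is obtained in that range, and in particular there is no mechanism to produce the arbitrary $(\log X\kappa N)^{-A}$ saving in the second error term. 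That saving comes entirely from choosing $U$ short by an arbitrary power of $\log$; it is not something Conjecture~\ref{conjecture2} supplies. The tail $\mathcal{B}_U$ is then handled separately: it is written as a contour integral, shifted to $\real(s)=1/\log(X\kappa N)$ (the integrand is entire because $L(1/2,f\otimes\chi_{8d})=0$ on this subfamily), and the factor $((8d)^s-U^s)/s$ is bounded by $\log(X/U)\ll\log\log(X\kappa N)$. What remains is an integral of $|L(1/2+it,f\otimes\chi_{8d})|$ summed over $d$, and it is to \emph{this} undifferentiated first moment --- not to the Poisson dual sum --- that Conjecture~\ref{conjecture2} is applied, giving the $X(\log X\kappa N)^{1/4+\varepsilon}$ term. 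No Cauchy--Schwarz and no second-moment bound enter; your description of ``pairing via Cauchy--Schwarz against the undifferentiated second moment'' and of ``trading a power of the conductor for $(\log X\kappa N)^{-A}$'' does not correspond to any step that actually works here.

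A secondary point concerning the $N$-dependence: the paper does \emph{not} take the symmetric approximate functional equation you wrote down but rather the asymmetric one with parameter $Z=N^{1/2}$ in Lemma~\ref{approxfe}, so that the two halves carry weights $W_{N^{1/2}}$ and $W_{N^{-1/2}}$ of different effective lengths. This is what balances the level dependence between the $T(1,h)$ and $T(N,h)$ contributions in Proposition~\ref{FirstMomentCalculation}; the symmetric version would lose in the $N$-exponent of the second error term.
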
  

Thus our methods break convexity in the dependence on $\kappa $ and $N$ in the error term by an arbitrary power of log.  Using GRH once again, we obtain non-vanishing results.  By applying the technique from \cite{IK} theorem 5.17 we have that \[ \frac{L'(1,\sym^2 f)}{L(1,\sym^2 f)} + \frac{Z^{* '}(0)}{Z^*(0)}  \ll \log \log \kappa N.\] These terms therefore may be subsumed into the error term in theorem \ref{thm3}.  In the same vein, by theorem 5.19 of \cite{IK} one has the bound \[L(1, \sym^2 f) \gg (\log \log \kappa N)^{-1}.\]  From these estimates and theorem \ref{thm3} the following corollary is obtained. \begin{corollary}\label{nonvanishingcor}
Assume GRH.  If the root number of $f$ is $1$ then assume also that the level of $f$ is not an integer square. For any $A>0$ there exists an odd squarefree $d$ relatively prime to $N$ with $d \ll_A \kappa N / (\log \kappa N)^A $ for which \[w(f \otimes \chi_{8d})=-1 \,\,\,\,\,\,\,\,\,\,\, \text{ and } \,\,\,\,\,\,\,\,\,\, L'(1/2, f\otimes \chi_{8d}) > 0.\] \end{corollary}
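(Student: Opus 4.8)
The plan is to derive Corollary \ref{nonvanishingcor} from Theorem \ref{thm3} by a standard positivity argument, the only real freedom being the scaling parameter $X$ in the smoothing factor $F(8d/X)$. Since $F$ is supported in a fixed compact set and takes only nonnegative values with $\widetilde F(1)=\int_0^\infty F(x)\,dx>0$, any strictly positive value of the first moment
\[
M(X) := \sum_{\substack{(d,2N\square)=1 \\ w(f\otimes\chi_{8d})=-1}} L'(1/2,f\otimes\chi_{8d})\,F(8d/X)
\]
forces at least one summand to be strictly positive, hence produces a $d$ with $(d,2N\square)=1$, $w(f\otimes\chi_{8d})=-1$, $F(8d/X)>0$ and $L'(1/2,f\otimes\chi_{8d})>0$; the condition $F(8d/X)>0$ then confines $d$ to the range $d\ll X$, and $(d,2N\square)=1$ is precisely the statement that $d$ is odd, squarefree and coprime to $N$. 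So it suffices to exhibit some $X\ll_A\kappa N/(\log\kappa N)^A$ for which Theorem \ref{thm3} yields $M(X)>0$.

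First I would record a clean lower bound for the leading constant $C_3(f)=\frac{\widetilde F(1)}{2\pi^2}L(1,\sym^2 f)Z^*(0)$. Under the hypotheses of the corollary the vanishing condition of Theorem \ref{thm3} (that $w(f)=1$ and $N$ is a square) does not hold, so $Z^*(0)\neq 0$, and in fact $Z^*(0)\gg\frac{\log\log N}{(\log N)^{1/2}}$ uniformly in $\kappa$ (the case $N=1$ being trivial since there $Z^*(0)$ is a fixed positive number); together with the GRH lower bound $L(1,\sym^2 f)\gg(\log\log\kappa N)^{-1}$ recorded just after Theorem \ref{thm3}, and using that $\log\log$ is negligible against the half-powers of $\log\kappa N$ that arise when $\kappa$ is large compared with $N$, this gives $C_3(f)\gg_\varepsilon(\log\kappa N)^{-1/2-\varepsilon}$ with an effective implied constant. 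By the same remarks the secondary terms satisfy $\frac{L'(1,\sym^2 f)}{L(1,\sym^2 f)}+\frac{Z^{* '}(0)}{Z^*(0)}\ll\log\log\kappa N$ and may be absorbed into the error term, so for $\kappa N$ large Theorem \ref{thm3} reads
\[
M(X)=C_3(f)\,X\log\!\Big(\frac{X\kappa\sqrt{N}}{2\pi}\Big)+O_A\!\Big(C_3(f)X\log\log\kappa N+X(\log X\kappa N)^{1/4+\varepsilon}+\frac{X^{13/17}(\kappa N)^{4/17}}{(\log X\kappa N)^{A}}\Big).
\]

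Now choose $X=c\,\kappa N/(\log\kappa N)^{A}$ with a suitable absolute constant $c$; we may assume $\kappa N$ exceeds a bound depending on $A$, the finitely many newforms of smaller level and weight being covered by applying Theorem \ref{thm3} with $f$ fixed and $X$ large in terms of $f$ (permissible because $\kappa N/(\log\kappa N)^{A}$ is bounded below on that finite range, so an $f$-dependent bound on $d$ there is absorbed into an $A$-dependent one). For this $X$ one has $\log(X\kappa\sqrt{N}/(2\pi))\asymp\log X\kappa N\asymp\log\kappa N$, and the main term is positive of size $\gg X(\log\kappa N)^{1/2-\varepsilon}\gg\kappa N/(\log\kappa N)^{A-1/2+\varepsilon}$. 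The first error term is $\asymp\kappa N/(\log\kappa N)^{A-1/4-\varepsilon}$, smaller than the main term by a factor $(\log\kappa N)^{-1/4+2\varepsilon}$; the term $C_3(f)X\log\log\kappa N$ is smaller by a factor $\log\log\kappa N/\log\kappa N$; and since $\frac{13}{17}+\frac{4}{17}=1$ the second error term is $\asymp\kappa N/(\log\kappa N)^{A+13A/17}$, again $o$ of the main term. Hence $M(X)>0$ once $\varepsilon$ is fixed small (say $\varepsilon=1/16$) and $\kappa N$ is large, and the positivity argument of the first paragraph produces the required $d$ with $d\ll X\ll_A\kappa N/(\log\kappa N)^{A}$. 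The only genuine bookkeeping is the exponent of $\log\kappa N$ in the second error term: the identity $\frac{13}{17}+\frac{4}{17}=1$ is exactly what keeps that term of order $\kappa N$ divided by an arbitrarily large power of $\log\kappa N$ after $X$ is replaced by a power of $\kappa N$, so no new analytic obstacle arises — the whole corollary is a formal consequence of Theorem \ref{thm3} together with the two quoted bounds of \cite{IK}.
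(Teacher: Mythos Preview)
Your argument is correct and is exactly the approach the paper sketches: the paper merely says that the corollary follows from Theorem \ref{thm3} together with the bounds $\frac{L'(1,\sym^2 f)}{L(1,\sym^2 f)}+\frac{Z^{*'}(0)}{Z^*(0)}\ll\log\log\kappa N$ and $L(1,\sym^2 f)\gg(\log\log\kappa N)^{-1}$, and you have supplied the routine positivity/bookkeeping details (choice of $X\asymp\kappa N/(\log\kappa N)^A$ and comparison of exponents) that the paper leaves implicit.
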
 

If $E/\mathbb{Q}$ is an elliptic curve given by the Weierstauss equation $y^2=f(x),$ we may define the twisted elliptic curve $E^d/\mathbb{Q}$ by the equation $dy^2=f(x)$.  By the work of Gross and Zagier \cite{GrossZagier} and the modularity theorem \cite{BCDT} we have the following corollary.  \begin{corollary}\label{smallrank1twists} Assume GRH.   Let $E/\mathbb{Q}$ be an elliptic curve of odd conductor $N$.  If the root number of $E$ is $1$, then assume also that the conductor $N$ is not an integer square.  For every $A>0$ there exist odd squarefree $d$ relatively prime to $N$ with $d \ll_A N/(\log N)^A$ for which the curve $E^{8d}/\mathbb{Q}$ has root number $-1$ and Mordell-Weil rank exactly 1.  \end{corollary}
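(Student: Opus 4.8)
The plan is to deduce Corollary~\ref{smallrank1twists} directly from Corollary~\ref{nonvanishingcor} applied to the weight two newform attached to $E$, so that the GRH hypothesis here is inherited from there. First I would invoke the modularity theorem~\cite{BCDT} to produce a normalized cuspidal Hecke newform $f$ of even weight $\kappa=2$, trivial central character, and level exactly equal to the conductor $N$ of $E$ (so $N$ is odd by hypothesis), with $L(s,f)=L(s+1/2,E)$ and, for every positive fundamental discriminant $8d$ coprime to $N$, $L(s,f\otimes\chi_{8d})=L(s+1/2,E^{8d})$, together with matching completed functional equations and hence $w(E^{8d})=w(f\otimes\chi_{8d})$. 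The root number of $E$ itself is $w(f)$, so the degenerate case excluded in Corollary~\ref{nonvanishingcor} --- namely $w(f)=1$ with $N$ a perfect square --- is precisely the case excluded from the hypotheses of the present corollary, and Corollary~\ref{nonvanishingcor} therefore applies to $f$.

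Since $\kappa=2$ is a fixed even integer, the weight dependence in Corollary~\ref{nonvanishingcor} is inert and its conclusion reads: for each $A>0$ there is an odd squarefree $d$ coprime to $N$ with $d\ll_A N/(\log N)^A$ such that $w(f\otimes\chi_{8d})=-1$ and $L'(1/2,f\otimes\chi_{8d})>0$. I would then translate this through the dictionary above. The first condition gives $w(E^{8d})=-1$, so by its functional equation $L(s,E^{8d})$ vanishes at the central point $s=1$; the second condition says precisely that $\tfrac{d}{ds}L(s,E^{8d})\vert_{s=1}=L'(1/2,f\otimes\chi_{8d})>0$, and in particular is nonzero. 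Hence $L(s,E^{8d})$ has a simple zero at $s=1$, i.e. $E^{8d}$ has analytic rank exactly one.

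The last step is to pass from analytic rank one to Mordell--Weil rank exactly one, which is where I would invoke the formula of Gross and Zagier~\cite{GrossZagier} (after the standard choice of an auxiliary imaginary quadratic field, whose existence is guaranteed by the unconditional non-vanishing results for quadratic twists): the non-vanishing of $L'(1,E^{8d})$ forces the associated Heegner point to have infinite order, so $\mathrm{rank}\,E^{8d}(\mathbb{Q})\geq 1$, and combining this with Kolyvagin's theorem --- which under analytic rank one pins the algebraic rank to exactly one (and makes the Tate--Shafarevich group finite) --- gives $\mathrm{rank}\,E^{8d}(\mathbb{Q})=1$. I do not expect a genuine obstacle anywhere in this argument; the step requiring the most care is purely bookkeeping --- checking that $f$ has level exactly $N$ and trivial nebentypus, that $8d$ with $d$ odd squarefree lies in the family of fundamental discriminants singled out at the beginning of this section (indeed $8d=4\cdot(2d)$ with $2d\equiv 2\bmod 4$ squarefree), and that the shift-by-$\tfrac12$ dictionary between $E^{8d}$ and $f\otimes\chi_{8d}$ matches root numbers and central values correctly. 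If one wished to cite only Gross and Zagier one would still obtain $\mathrm{rank}\,E^{8d}(\mathbb{Q})\geq 1$, but the rank-exactly-one statement recorded here is the natural consequence.
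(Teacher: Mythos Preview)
Your proposal is correct and follows exactly the route indicated in the paper, which deduces the corollary in one sentence from Corollary~\ref{nonvanishingcor} via the modularity theorem and Gross--Zagier. You are in fact more careful than the paper: to obtain Mordell--Weil rank \emph{exactly} one (not merely $\geq 1$) one does need Kolyvagin's theorem in addition to Gross--Zagier, and your proposal correctly invokes it, whereas the paper cites only \cite{GrossZagier} and \cite{BCDT}.
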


One expects the convexity bound here to be a non-vanishing twist of size $d \ll_\varepsilon (\kappa N)^{1+\varepsilon},$ see e.g. Hoffstein and Kontorovich \cite{HoffsteinKontorovich}.  Our non-vanishing corollaries on GRH are, in fact, quite weak.  As previously remarked by many authors, the method of moments is an inefficient way to produce non-vanishing theorems.  If one is willing to assume GRH, the methods of Iwaniec, Luo and Sarnak \cite{ILS}, \"{O}zl\"{u}k and Snyder \cite{OS2}, \cite{OS3} or Heath-Brown \cite{HBAvgRank} adapted to small nonvanishing twists should yield better results.  We postpone carrying out this line of research to a future paper, and moreover, we believe that the theorems \ref{thm1}, \ref{thm2} and \ref{thm3} have interest independent of the corollaries.  

We do not use the full strength of GRH in theorems \ref{thm1}, \ref{thm2} or \ref{thm3}.  In fact, in the case of the first two all we need is the following conjecture. \begin{conj}\label{conjecture1} Let $\varepsilon>0$, and $t$ be a real number with $|t| \leq X$ and $1/2 \leq \sigma \leq 1/2+1/\log X.$  Then \[\sum_{\substack{d \in \mathcal{D} \\(d,N)=1 \\  |d|\leq X}} |L(\sigma+it,f\otimes \chi_d)|^2 \ll_{\kappa,N,\varepsilon} X(\log X)^{1+\varepsilon}.\] \end{conj}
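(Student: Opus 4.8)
\emph{Proof proposal.} This bound is deduced from GRH --- for the family $\{L(s,f\otimes\chi_d)\}$ and for $L(s,\sym^2 f)$ --- by the method of Soundararajan \cite{SoundMoments}; I outline the deduction. Write $\mathfrak q_d(t)\asymp_N d^2(\kappa+|t|)^2$ for the analytic conductor of $f\otimes\chi_d$. The first step is the conditional pointwise bound obtained by feeding GRH for $L(s,f\otimes\chi_d)$ into the Hadamard factorisation of $\Lambda(s,f\otimes\chi_d)$ (the analogue of Soundararajan's main proposition for $L(s,f\otimes\chi_d)$, cf.\ the general-degree version due to Chandee): for $2\le x\le\mathfrak q_d(t)$ and $1/2\le\sigma\le1/2+1/\log X$,
\[
\log\big|L(\sigma+it,f\otimes\chi_d)\big|\ \le\ \real\!\sum_{p\le x}\frac{\lambda_f(p)\chi_d(p)}{p^{\sigma+it}}\frac{\log(x/p)}{\log x}\ +\ \frac12\real\!\sum_{p\le\sqrt x}\frac{(\lambda_f(p)^2-2)\chi_d(p)^2}{p^{2\sigma+2it}}\frac{\log(x/p^2)}{\log x}\ +\ \frac{c\log\mathfrak q_d(t)}{\log x}\ +\ O(1).
\]
I would take $x=X^\delta$ for a small fixed $\delta>0$ (discarding at the outset the $\ll X^{\delta/2}$ values of $d$ with $\mathfrak q_d(t)<X^\delta$, whose contribution is negligible by the convexity-breaking bound $|L(\sigma+it,f\otimes\chi_d)|\ll_\varepsilon(\mathfrak q_d(t))^\varepsilon$ on GRH); since $N,\kappa$ are fixed and $|d|,|t|\le X$, the conductor term is then $O_\delta(1)$. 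The prime-square sum must be retained: it is essentially deterministic (the primes dividing $d$ altering it by only $O(\log\log\log X)$), and by the prime number theorem for $L(s,\sym^2 f)$ --- using GRH for this $L$-function to bound $L(1+2it,\sym^2 f)$ when $|t|$ is large, which is the only place GRH for the symmetric square enters --- it contributes the ``arithmetic factor'' $\asymp(\log X)^{-1+o(1)}$ which turns the $(\log X)^{2}$ produced by the main prime sum into the conjectured $(\log X)^{1}$. After exponentiating, it remains to show that
\[
\sum_{\substack{d\in\mathcal D,\ (d,N)=1\\ |d|\le X}}\exp\!\Big(2\real\!\sum_{p\le x}\frac{\lambda_f(p)\chi_d(p)}{p^{\sigma+it}}\frac{\log(x/p)}{\log x}\Big)\ \ll_\varepsilon\ X(\log X)^{2+\varepsilon}
\]
(up to the conductor-side factors that cancel against the arithmetic factor).

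This is the quadratic-twist analogue of Soundararajan's bound for the exponential moment of a prime Dirichlet polynomial, with the mean value over fundamental discriminants $d$ taking the place of the $t$-average in \cite{SoundMoments}. The arithmetic input is the orthogonality of the characters $\chi_d$: as in \cite{SoundYoung}, Poisson summation in $d$ shows that $\sum_{d\in\mathcal D,\,(d,N)=1,\,|d|\le X}\chi_d(m)$ is $\asymp_m X$ for $m$ a perfect square and $\ll X^{1/2+\varepsilon}m^{1/4+\varepsilon}$ otherwise. I would split the primes $p\le x$ into segments $y_{j+1}<p\le y_j$ ($y_0=x$) that are geometrically spaced on the $\log\log$-scale, $\log y_{j+1}=(\log y_j)^{1-\theta}$; on the $j$-th segment a high ($2\ell_j$-th) moment over $d$ of the corresponding partial sum is dominated by its diagonal (square) terms --- the non-square terms being negligible because their combined length $y_j^{2\ell_j}$ remains below $X^{1-\varepsilon}$ as long as $\ell_j$ is a small multiple of $\delta^{-1}$ --- which gives a moment of size $\ll X(C\ell_j v_j)^{\ell_j}$, where $v_j=\sum_{y_{j+1}<p\le y_j}\lambda_f(p)^2p^{-2\sigma}\ll\theta\log\log y_j$ by Rankin--Selberg. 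Rankin's trick applied segment by segment then confines the sum to a set of $d$ on which the exponential is of the expected size $(\log X)^{2+\varepsilon}$, the complementary set having measure $o(X)$ and being absorbed by the trivial bound $\exp(O(X^{\delta/2}))$ there.

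The one genuinely delicate step is the control of the longest segment, the primes up to $x=X^\delta$: there the diagonal approximation to the moments over $d$ is valid only up to order $\asymp\delta^{-1}$, which is far too short to obtain the required concentration in a single step --- this is exactly what forces the hierarchical splitting above, with the admissible moment order $\ell_j$ and the threshold on each segment to be optimised simultaneously, and it is the combinatorial heart of Soundararajan's treatment of the moments of $\zeta$. It transfers to the present family essentially verbatim once the character-sum estimate is available; the explicit-formula proof of the pointwise bound, the Poisson summation evaluating $\sum_d\chi_d(m)$, and the bookkeeping in the concentration step are all routine.
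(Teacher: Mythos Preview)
The paper does not prove this statement: it is stated as a \emph{conjecture}, and the only justification the paper gives is the single sentence ``The work of Soundararajan \cite{SoundMoments} shows that conjecture~\ref{conjecture1} follows from the GRH for the Riemann zeta function, the family of quadratic twists of $f$ and the symmetric square of $f$.'' There is therefore no proof in the paper to compare your proposal against; you have supplied an outline of a deduction that the paper deliberately omits.

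That said, your sketch is a faithful and essentially correct account of how Soundararajan's method applies here. The three ingredients you isolate --- (i) the conditional pointwise inequality for $\log|L(\sigma+it,f\otimes\chi_d)|$ in terms of a short Dirichlet polynomial, (ii) the deterministic evaluation of the prime-square piece producing the factor $(\log X)^{-1+o(1)}$ after squaring, and (iii) the large-deviation bound for the prime sum over $d$ via high moments and the quadratic character orthogonality $\sum_{|d|\le X}\chi_d(m)$ --- are exactly the right ones, and your remark that the hierarchical segmentation of the prime range is forced by the short admissible moment order on the longest segment is the genuine content of \cite{SoundMoments}.

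One small omission: the paper lists GRH for $\zeta$ alongside GRH for $\sym^2 f$ and for the twists, whereas you invoke only the latter two. The reason is visible in your own prime-square sum: writing $\lambda_f(p)^2-2=\lambda_{\sym^2 f}(p)-1$ (for $p\nmid N$), the sum $\tfrac12\sum_{p\le\sqrt x}(\lambda_f(p)^2-2)p^{-2\sigma-2it}$ splits into a $\sym^2 f$-piece and a $\zeta$-piece $-\tfrac12\sum_{p\le\sqrt x}p^{-2\sigma-2it}$. For $|t|$ ranging up to $X$ you need RH for $\zeta$ (or at least a strong enough zero-free region) to keep this second piece $O(\log\log X)$ uniformly, exactly as you use GRH for $\sym^2 f$ on the first piece. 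This is a bookkeeping point rather than a structural gap, but it explains the extra hypothesis in the paper's remark.
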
 Theorem \ref{thm3} on the other hand is true if we assume than $N$ is odd squarefree and conjecture \ref{conjecture2} in place of GRH. \begin{conj}\label{conjecture2} Let $\varepsilon>0$, and $t$ be a real number with $|t| \leq X$ and $1/2 \leq \sigma \leq 1/2+1/\log X.$  Then \[\sum_{\substack{d \in \mathcal{D} \\(d,N)=1\\ |d| \leq X}} |L(\sigma+ it, f \otimes \chi_d)| \ll_{\varepsilon} X (\log X\kappa N )^{1/4+\varepsilon}.\] \end{conj}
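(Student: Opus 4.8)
\emph{Proposal.} The plan is to assume GRH and to run Soundararajan's moment method \cite{SoundMoments}, which is the source indicated in the introduction; the hypothesis enters only through a pointwise upper bound for $\log|L|$. Write $-\tfrac{L'}{L}(s,f\otimes\chi_d)=\sum_n\Lambda_{f\otimes\chi_d}(n)n^{-s}$, so $\Lambda_{f\otimes\chi_d}(p^j)=(\alpha_p^j+\beta_p^j)\chi_d(p)^j\log p$ and, by Deligne's bound $|\alpha_p|=|\beta_p|=1$, $|\Lambda_{f\otimes\chi_d}(n)|\leq 2\Lambda(n)$. The input from GRH is: for $x\geq 2$, $\sigma\geq 1/2$, and $\sigma_0=\sigma+\lambda_0/\log x$ with $\lambda_0$ a suitable absolute constant,
\[\log|L(\sigma+it,f\otimes\chi_d)|\leq\real\sum_{n\leq x}\frac{\Lambda_{f\otimes\chi_d}(n)}{n^{\sigma_0+it}\log n}\cdot\frac{\log(x/n)}{\log x}+c\,\frac{\log\mathfrak q(f\otimes\chi_d,t)}{\log x}+O\Big(\frac{1}{\log x}\Big),\]
where $c>0$ is absolute and $\mathfrak q(f\otimes\chi_d,t)\asymp N|d|^2(\kappa+|t|)^2$, so $\log\mathfrak q\ll\log(X\kappa N)$ throughout the range $|d|,|t|\leq X$. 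This is proved exactly as in \cite{SoundMoments}, via the Hadamard factorization together with the location of the nontrivial zeros on the critical line. (One may assume $\kappa N\ll X^{O(1)}$; outside this range the estimate is not needed for Theorem~\ref{thm3}, in view of its error term.)

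First I would take $x=X^{\delta}$ for a small $\delta=\delta(\varepsilon)$, so that the conductor term above is $O_\varepsilon(1)$ and may be absorbed. Exponentiating and peeling off the $d$-independent prime-power contribution $\real R$ — which, using $\alpha_p^2+\beta_p^2=\lambda_{\sym^2 f}(p)-1$, equals $\tfrac14\big[\log|L(2\sigma_0+2it,\sym^2 f)|-\log|\zeta(2\sigma_0+2it)|\big]+O(1)$ — reduces matters to bounding $\sum_{|d|\leq X}e^{\real P_d}$ with $P_d=\sum_{p\leq x}\lambda_f(p)\chi_d(p)w_p\,p^{-\sigma_0-it}$ and $w_p=\log(x/p)/\log x$. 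I would then write $e^{\real P_d}=\big|e^{\frac12 P_d}\big|^2$ and expand $e^{\frac12 P_d}=\sum_{n}\widetilde c_n\,\chi_d(n^{\flat})\,n^{-\sigma_0-it}$, where $n^{\flat}=\prod_{p:\,v_p(n)\ \mathrm{odd}}p$ and $\widetilde c_n=\prod_p (\tfrac12\lambda_f(p)w_p)^{v_p(n)}/v_p(n)!$ is real, $x$-smooth-supported, and independent of $d$ and $t$ with $|\widetilde c_n|\leq 1$. Opening the square and summing over $d$ uses quasi-orthogonality of quadratic characters: $\sum_{|d|\leq X,\,(d,N)=1,\,d\in\mathcal D}\chi_d(k)$ is a main term $\asymp X$ when $k$ is a perfect square and is $O(k^{1/2}\log k)$ otherwise.

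The diagonal $m^{\flat}=n^{\flat}$ (equivalently, since $m^{\flat},n^{\flat}$ are squarefree, $m^{\flat}=n^{\flat}$) contributes $\asymp X\prod_{p\leq x}\big(|\sum_{a\ \mathrm{even}}\widetilde c_{p^a}p^{-a(\sigma_0+it)}|^2+|\sum_{a\ \mathrm{odd}}\widetilde c_{p^a}p^{-a(\sigma_0+it)}|^2\big)\asymp X\exp\big(\tfrac14\log\log x+\tfrac14[\log|\zeta(2\sigma_0+2it)|+\log|L(2\sigma_0+2it,\sym^2 f)|]+O(1)\big)$; multiplying by $e^{\real R}$ the two $\zeta$-contributions cancel, leaving $\asymp X\exp\big(\tfrac14\log\log x+\tfrac12\log|L(2\sigma_0+2it,\sym^2 f)|+O(1)\big)$. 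The coefficient $\tfrac14$ is $\tfrac12\cdot\tfrac12$, where one $\tfrac12$ comes from squaring $e^{\frac12 P_d}$ and the other from the Rankin--Selberg estimate $\sum_{p\leq x}\lambda_f(p)^2 w_p^2\,p^{-2\sigma_0}=\log\log x+O(1)$ (i.e. the simple pole of $\zeta(s)$ in $L(s,f\times f)=\zeta(s)L(s,\sym^2 f)$). Since $\real(2\sigma_0)\geq 1$, GRH for $L(s,\sym^2 f)$ gives $\log|L(2\sigma_0+2it,\sym^2 f)|\ll\log\log\log(X\kappa N)$ uniformly for $|t|\leq X$, so the diagonal is $\ll X(\log X)^{1/4}(\log\log X\kappa N)^{O(1)}\ll_\varepsilon X(\log X\kappa N)^{1/4+\varepsilon}$.

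The main obstacle is the one at the heart of \cite{SoundMoments}: $e^{\frac12 P_d}$ is an infinite Dirichlet series, so one must split $P_d$ into a small-prime part ($p\leq z$) and a large-prime part ($z<p\leq x$), expand $e^{(\cdot)}$ of each to a slowly growing order, and verify that (i) the resulting genuine Dirichlet polynomial has length $\ll X^{1-\eta}$, making the off-diagonal sums over $d$ negligible against the diagonal of size $\asymp X(\log X)^{1/4}$, and (ii) the truncation error — governed by the rare $d$ for which the large-prime sum is atypically large — does not degrade the exponent $\tfrac14$. Reconciling the length requirement $x^{\mathrm{order}}\ll X^{1-\eta}$ with the condition $\log\mathfrak q/\log x=O_\varepsilon(1)$, and carrying this out uniformly in $t$ (in particular keeping the $\zeta(2\sigma_0+2it)$- and $L(2\sigma_0+2it,\sym^2 f)$-type factors from the prime-power term $R$ under control for all $|t|\leq X$), is the delicate point; these are precisely the combinatorial estimates of \cite{SoundMoments}, and only routine bookkeeping is needed to transcribe them to the present family of quadratic twists.
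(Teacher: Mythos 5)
You were asked to prove Conjecture \ref{conjecture2}, and the paper itself contains no proof of it: it is stated as a conjecture, with only the remark that GRH (for $\zeta$, for the quadratic twists of $f$, and for $\sym^2 f$) implies it ``by keeping track of the dependence on $\kappa$ and $N$ in Soundararajan's proof'' \cite{SoundMoments}. That is exactly the route you propose, so there is no in-paper argument to compare against; your sketch fleshes out the reference the paper defers to, and its skeleton is the right one: the exponent $1/4$ comes from the halved variance $\tfrac12\sum_p\lambda_f(p)^2\cos^2(t\log p)p^{-2\sigma_0}\approx\tfrac12\log\log x$ off the point $t=0$, and the genuinely hard steps (small/large prime splitting, truncation of the exponential, frequency of large values, length-versus-conductor bookkeeping, uniformity in $t$) are, as you say, Soundararajan's combinatorial estimates transcribed to this family.

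Two caveats. First, a harmless but real slip: the prime-square contribution is $\sum_{p^2\le x}(\alpha_p^2+\beta_p^2)p^{-2(\sigma_0+it)}/2$, which equals $\tfrac12\bigl[\log L(2\sigma_0+2it,\sym^2 f)-\log\zeta(2\sigma_0+2it)\bigr]+O(1)$, not $\tfrac14[\cdots]$; with the correct coefficient the $\zeta$-terms no longer cancel exactly and you are left with an extra factor $\exp\bigl(-\tfrac14\log|\zeta(2\sigma_0+2it)|\bigr)$, which under RH is $(\log\log X\kappa N)^{O(1)}$ for $|t|\gg1/\log x$ and is $\le 1$ near $t=0$, so the final bound survives, but the bookkeeping should be redone with the factor $\tfrac12$. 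Second, your choice $x=X^{\delta}$ controls the conductor term only when $\kappa N\ll X^{O(1)}$; the conjecture as literally stated carries no such restriction, so strictly speaking your argument establishes it only in that regime. Since the error term in Theorem \ref{thm3} makes the statement vacuous once $\kappa N$ exceeds a fixed power of $X$, this restriction costs nothing for the paper's application, but it should be stated explicitly rather than left as a parenthetical.
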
  The work of Soundararajan \cite{SoundMoments} shows that conjecture \ref{conjecture1} follows from the GRH for the Riemann zeta function, the family of quadratic twists of $f$ and the symmetric square of $f$.  By keeping track of the dependence on $\kappa$ and $N$ in Soundararajan's proof, one finds that the GRH for quadratic twists of $f$, the Riemann zeta function, and the symmetric square of $f$ implies conjecture \ref{conjecture2}.  Unconditionally, all that is known towards conjectures \ref{conjecture1} and \ref{conjecture2} is a bound of the form $\ll_{f,\varepsilon} (X(1+|t|))^{1+\varepsilon}$ due to Heath-Brown's quadratic large sieve \cite{HBQuadSieve}.  It seems that obtaining the results of this paper unconditionally should not be completely out of reach, but nonetheless, doing so requires additional ideas.

Let us briefly describe the main difficulties in proving the above theorems, some previous attacks on these difficulties, and the new input in our work which allows us to overcome them.  

Take for example theorem \ref{thm1}.  After applying the approximate functional equation and pulling the sum over $d$ inside one encounters a sum of the form \[\sum_d \chi_d(n_1n_2) F\left(\frac{d}{U}\right) \] for some cut-off function $F$, where $\chi_d$ is the quadratic character modulo $d$.  One wants to apply Poisson summation to this sum, but the length of the sum $U\approx X$ is comparable to the square root of the conductor $\sqrt{n_1n_2}$, so the dual sum that one obtains is of the same shape as the original.  This is the familiar ``deadlock'' situation described, for example, in the paper of Munshi \cite{MunshiMoments2}, or by multiple Dirichlet series, for example in \cite{DGH}.  This deadlock has been broken in some ways before.  Soundararajan and Young find that the second moment of $L(1/2,f\otimes \chi_d)$ is transformed by Poisson summation to the dual problem of finding an estimate of the integral over shifts $it_1$ and $it_2$ of the same moment.  They exploit this transformation using GRH to obtain upper bounds on shifted moments to prove their theorem.  Munshi observes in the paper \cite{MunshiMoments2} that taking derivatives amplifies the main term of moments but does not affect the error term.  He uses this fact to unconditionally obtain an asymptotic formula for the first moment of higher derivatives $\Lambda^{(\ell)}(1/2,f \otimes \chi_d)$ with $\ell \geq 8$ weighted by the number of representations of $d$ as a sum of two squares (a situation with conductor of similar length to ours).  Munshi also solves a similar problem in \cite{MunshiMoments3} obtaining an asymptotic for the first derivative in the special case that $f$ corresponds to a CM elliptic curve.  

In our paper, we observe that taking a derivative concentrates the mass of $L'(1/2,f\otimes\chi_d)$ in the terms of the approximate functional equation with small $n$.  When we truncate $U \leq X/(\log X)^{100}$ we gain something from Poisson summation, and treat the tail separately.  The idea behind bounding the tail is that \[L'(1/2,f\otimes \chi_d) \approx \sum_{n\leq |d|} \frac{\lambda_f(n)\chi_d(n)\log\frac{|d|}{n}}{n^{1/2}},\] so that when $|d|/(\log |d|)^{100} \leq n \leq |d|$ we have that the $0\leq \log |d|/n \ll \log \log |d|$ are quite small.  These terms look essentially like the series for $L(1/2,f \otimes \chi_d)$, the moments of which are smaller than moments of the derivative.  We are then able to use Soundararajan's upper bounds assuming GRH \cite{SoundMoments} to bound the tail.  The idea is that the dual sum of a moment of $L'(1/2,f\otimes \chi_d)$ looks like a moment of the un-differentiated $L(1/2,f\otimes \chi_d),$ which we exploit to obtain our results.  

\section{Approximate Functional Equation}

We begin with a lemma which will be used in all three theorems.
\begin{lemma}[Approximate functional equation]\label{approxfe}
Let $f$ be a $\lambda_f(1)=1$ normalized cuspidal newform on $\Gamma_0(N)$ with trivial central character and root number $w(f)=i^\kappa \eta$.  Let $Z>0$ be an arbitrary real number parameter.  Define the cut-off function \[W_Z(x) := \frac{1}{2 \pi i} \int_{(3)} \frac{\Gamma(u+\kappa/2)}{\Gamma(\kappa/2)} \left(\frac{2 \pi x}{Z\sqrt{N}}\right)^{-u}\frac{1-u\log Z}{u^2}\,du.\]  Then  \[ \sum_{n \geq 1} \frac{\lambda_f(n) \chi_d(n)}{n^{1/2}} W_Z\left(\frac{n}{|d|}\right) -i^\kappa\eta \chi_d(-N)\sum_{n \geq 1} \frac{\lambda_f(n) \chi_d(n)}{n^{1/2}} W_{Z^{-1}}\left(\frac{n}{|d|}\right)= \begin{cases} L'(1/2,f \otimes \chi_d) & \text{if } w(f \otimes \chi_{d}) = -1 \\ 0 & \text{if } w(f \otimes \chi_d)=1. \end{cases}  \]
\end{lemma}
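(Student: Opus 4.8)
The plan is to derive this approximate functional equation for $L'(1/2, f\otimes\chi_d)$ by the standard contour-shift argument applied to the completed $L$-function, but with a twist: instead of integrating $\Lambda(s, f\otimes\chi_d)$ against a kernel with a single pole, we integrate $\Lambda(s, f\otimes\chi_d)$ against a kernel having a \emph{double} pole at $s=1/2$, so that the residue picks out the derivative rather than the value. Concretely, I would start from the integral
\[
I := \frac{1}{2\pi i}\int_{(3)} \Lambda\!\left(\tfrac12 + u, f\otimes\chi_d\right)\, G(u)\, \frac{du}{u^2},
\]
where $G(u)$ is a suitable even entire function with $G(0)=1$ chosen to balance the gamma factors (here the relevant choice, once one divides through by the archimedean factor $(|d|\sqrt N/2\pi)^{1/2}\Gamma(\kappa/2)$, produces the ratio $\Gamma(u+\kappa/2)/\Gamma(\kappa/2)$ together with the parameter $Z$ entering through a factor $Z^{u}$; the extra linear factor $1-u\log Z$ in $W_Z$ is exactly what one needs to make the $Z$-dependence cancel between the two terms after differentiation, since $\frac{d}{du}\big|_{u=0}(Z^u(1-u\log Z)) = 0$). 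Moving the contour from $\real(u)=3$ to $\real(u)=-3$, one crosses only the pole at $u=0$, which is a double pole because of the $1/u^2$; its residue is $\Lambda'(1/2, f\otimes\chi_d)$ plus lower-order terms that are arranged to vanish by the choice of kernel.

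Next I would evaluate the two contour integrals. On the line $\real(u)=3$, expanding $\Lambda(1/2+u, f\otimes\chi_d)$ into its absolutely convergent Dirichlet series and integrating term by term gives, after dividing out the archimedean factor, the first sum $\sum_n \lambda_f(n)\chi_d(n) n^{-1/2} W_Z(n/|d|)$ with $W_Z$ exactly as defined in the lemma — the definition of $W_Z$ is just this term-by-term Mellin integral. On the shifted line $\real(u)=-3$ I would apply the functional equation $\Lambda(1/2+u, f\otimes\chi_d) = i^\kappa\eta\chi_d(-N)\,\Lambda(1/2-u, f\otimes\chi_d)$, substitute $u \mapsto -u$ (which sends $Z^u \mapsto Z^{-u}$ and $(1-u\log Z)\mapsto (1+u\log Z)$; note $1/u^2$ is even), and again expand into a Dirichlet series. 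This produces $-i^\kappa\eta\chi_d(-N)\sum_n \lambda_f(n)\chi_d(n) n^{-1/2} W_{Z^{-1}}(n/|d|)$, the sign flip coming from the fact that shifting the contour the other way reverses orientation. Equating, the residue $\Lambda'(1/2, f\otimes\chi_d)$, divided by the (nonzero) archimedean normalization, equals the difference of the two sums; and when $w(f\otimes\chi_d)=i^\kappa\eta\chi_d(-N)=+1$ the two sums coincide so the difference is $0$, consistent with $L'(1/2,f\otimes\chi_d)$ being forced to be the relevant object only in the $-1$ case (when $w=+1$ one has $\Lambda'(1/2)=-\Lambda'(1/2)$, hence $=0$).

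The main point requiring care — and the step I expect to be the real obstacle — is the bookkeeping at the double pole: verifying that the residue of $\Lambda(1/2+u, f\otimes\chi_d)G(u)/u^2$ at $u=0$ is \emph{exactly} $L'(1/2, f\otimes\chi_d)$ times the normalizing constant, with no surviving contribution from $\Lambda(1/2, f\otimes\chi_d)$ (which is $0$ in the relevant case but one wants a clean identity valid term-by-term) and no leftover $\log Z$ or $G'(0)$ terms. This is precisely what dictates the shape of the kernel: one needs $G(u) = $ (ratio of gamma factors) $\times Z^u(1-u\log Z)$ arranged so that $\frac{d}{du}\big|_{u=0}$ of the whole non-$L$ part of the integrand vanishes, leaving only the $L'$ contribution. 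I would check this by writing the Laurent expansion $\Lambda(1/2+u) = \Lambda(1/2) + u\,\Lambda'(1/2) + O(u^2)$, multiplying by the Taylor expansion of the kernel about $u=0$, and reading off the coefficient of $u^{-1}$; the condition ``kernel is $1 + O(u^2)$ near $u=0$'' kills the cross term and gives the result. One also needs the routine justification that the shifted integral on $\real(u)=-3$ converges and that interchanging sum and integral is legitimate, which follows from Stirling bounds on the gamma ratio and polynomial control of $L(1/2+u, f\otimes\chi_d)$ on vertical lines (convexity suffices); these are standard and I would state them without detailed computation.
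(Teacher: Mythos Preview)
Your approach is essentially identical to the paper's: set $I(Z,f,s)=\frac{1}{2\pi i}\int_{(3)}\Lambda(s+u,f\otimes\chi_d)\,Z^{u}\frac{1-u\log Z}{u^2}\,du$, shift to $\real(u)=-3$ to pick up the residue $\Lambda'(s,f\otimes\chi_d)$ (using $Z^{u}(1-u\log Z)=1+O(u^2)$), apply the functional equation together with $u\mapsto -u$ to obtain $I(Z,f,s)=\Lambda'(s,f\otimes\chi_d)+i^\kappa\eta\chi_d(-N)\,I(Z^{-1},f,1-s)$, and set $s=1/2$. Two small slips worth correcting: the kernel $Z^{u}(1-u\log Z)$ is \emph{not} even in $u$ (under $u\mapsto -u$ it becomes the $Z^{-1}$ kernel, which is precisely why $W_{Z^{-1}}$ appears in the second sum), and in the $w=+1$ case the two sums do not coincide a priori when $Z\neq 1$ --- rather, their difference equals $\Lambda'(1/2,f\otimes\chi_d)$, which vanishes because the functional equation for $\Lambda'$ forces $\Lambda'(1/2)=-w\,\Lambda'(1/2)$, exactly the argument you give in your parenthetical remark.
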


\begin{proof}
We follow Iwaniec and Kowalski \cite{IK} Section 5.2. Take \begin{eqnarray*} I(Z,f,s) & := & \frac{1}{2 \pi i } \int_{(3)} \Lambda(s+u,f \otimes \chi_d)Z^{u} \frac{1-u\log Z}{u^2}\,du \\ &= & \Lambda'(s,f\otimes \chi_d) + \frac{1}{2 \pi i} \int_{(-3)} \Lambda(s+u,f \otimes \chi_d) Z^{u}\frac{1-u\log Z}{u^2}\,du, \end{eqnarray*}  so that by a change of variables and an application of the functional equation we have \[ I(Z,f,s)=\Lambda'(s,f\otimes \chi_d) + i^\kappa \eta \chi_d(-N)I(Z^{-1},f,1-s). \]   If the root number $w(f \otimes \chi_d)=-1$, we take $s=1/2$ to find \[ L'(1/2, f \otimes \chi_d) = \sum_{n\geq 1}\frac{\lambda_f(n)\chi_d(n)}{n^{1/2}}W_Z\left(\frac{n}{|d|}\right)+\chi_d(-N)\sum_{n\geq 1}\frac{\lambda_f(n)\chi_d(n)}{n^{1/2}}W_{Z^{-1}}\left(\frac{n}{|d|}\right),\] as in the statement of the lemma.  On the other hand, if the root number of $f\otimes \chi_d$ is 1, then $\Lambda'(1/2, f \otimes \chi_d)=0$, hence \[I(Z,f,1/2)-i^\kappa \eta \chi_d(-N)I(Z^{-1},f,1/2) = 0.\]  Thus the lemma holds for both cases of root number of $f \otimes \chi_d$.    \end{proof}  In the proof of theorems \ref{thm1} and \ref{thm2} we will use $Z=1$ so that the approximate functional equation takes a particularly simple form.  Let $N$ be the level of $f$.  In the proof of theorem \ref{thm3} we take $Z=N^{1/2}$ to compensate for the asymmetry in estimates in level aspect introduced from averaging over root numbers.   Note that the only difference in the approximate functional equation for $L'(1/2,f\otimes \chi_d)$ as opposed to that of $L(1/2,f \otimes \chi_d)$ is the sign of the root number, and the denominator of the integrand of $W(x),$ which becomes $u^2$ instead of $u$.  Therefore, many of the calculations necessary for our results are identical to those in the paper of Soundararajan and Young \cite{SoundYoung}.

\section{Proof of Theorem \ref{thm1}}\label{ProofofTheorem1}

We prove theorem \ref{thm1} by splitting the sums in the approximate functional equation (lemma \ref{approxfe}), and using proposition \ref{prop1} below to compute the main terms.  

\begin{proof}[Proof of Theorem \ref{thm1}.]  Let $F$ be a smooth, nonnegative, compactly supported function on $\R_{>0}$, and recall the definition of $W(x)=W_1(x)$ from the approximate functional equation (lemma \ref{approxfe}).  For a parameter $U \leq X/ (\log X)^{100}$ define the truncated sum \[\mathcal{A}_U(1/2,f \otimes \chi_{8d}) := (1-i^\kappa \eta \chi_d(-N)) \sum_{n=1}^\infty \frac{\lambda_f(n)\chi_{8d}(n)}{\sqrt{n}} W\left(\frac{n}{U}\right), \]  and define the tail $\mathcal{B}_U(1/2,f \otimes \chi_{8d})$ by setting $L'(1/2,f \otimes \chi_{8d}) = \mathcal{A}_U(1/2,f \otimes \chi_{8d})+\mathcal{B}_U(1/2,f \otimes \chi_{8d}).$  Define the sums \begin{eqnarray*} \mathrm{I}_U(f) & := & \sum_{(d,2N\square)=1} L'(1/2,f \otimes \chi_{8d}) \mathcal{A}_U(1/2,f \otimes \chi_{8d}) F(8d/X) \\ \mathrm{II}_U(f) & :=  & \sum_{(d,2N\square)=1} \mathcal{A}_U(1/2,f \otimes \chi_{8d})^2 F(8d/X) \\ \mathrm{III}_U(f) & := & \sum_{(d,2N\square)=1} \mathcal{B}_U(1/2,f \otimes \chi_{8d})^2 F(8d/X). \end{eqnarray*} so that we have the decomposition \[ \sum_{(d,2N\square)=1}L'(1/2,f \otimes \chi_{8d})^2  F(8d/X) = 2 \mathrm{I}_U(f) - \mathrm{II}_U(f) + \mathrm{III}_U(f).\]  Using the below proposition \ref{prop1} we will be able to give asymptotic formulae for $\mathrm{I}_U(f)$ and $\mathrm{II}_U(f)$, and using conjecture \ref{conjecture1} we will obtain an upper bound on $\mathrm{III}_U(f)$ smaller than the main terms.  Applying this decomposition in Soundararajan and Young's work improves the error term there to $O\left(X(\log X)^{1/2+\varepsilon}\right)$.  

For $N' =1$ or $N$, and $h(x,y,z)$ some smooth cut-off function let \[S(N',h) := \sum_{(d,2N\square)=1} \sum_{n_1=1}^\infty\sum_{n_2=1}^\infty \frac{\lambda_f(n_1)\lambda_f(n_2)}{\sqrt{n_1n_2}}\chi_{8d}(N'n_1n_2)h(d,n_1,n_2).\]  \begin{proposition}\label{prop1} Assume GRH or conjecture \ref{conjecture1}.  Let $X,U_1,U_2$ large, $U_1U_2\leq X^2$, and $N$ odd.  Let $h(x,y,z)$ be a smooth function on $\R_{>0}^3,$ with compact support in $x$, having all partial derivatives extending continuously to the boundary, satisfying \[x^iy^jz^kh^{(i,j,k)}(x,y,z) \ll_{i,j,k} \left(1+\frac{x}{X}\right)^{-100}  \left(\log \frac{U_1}{y} \right)\left(1+\frac{y}{U_1}\right)^{-100}\left( \log \frac{U_2}{z}\right)  \left(1+\frac{z}{U_2}\right)^{-100}.\] Set $h_1(y,z) = \int_0^\infty h(xX,y,z)\,dx.$  Then \[S(N',h) = \frac{4X}{\pi^2} \sum_{\substack{(n_1n_2,2)=1 \\ N'n_1n_2 = \square}} \frac{\lambda_f(n_1) \lambda_f(n_2)}{\sqrt{n_1n_2}}\prod_{p|Nn_1n_2} \frac{p}{p+1} h_1(n_1,n_2) + O_{\kappa, N}\left( ( U_1U_2)^{1/4} X^{1/2} (\log X)^{11}\right).\]  \end{proposition}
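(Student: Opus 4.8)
The plan is to follow the method of Soundararajan and Young \cite{SoundYoung} closely, the only new feature being the logarithmic weights carried by $h$ (which come from the $u^{-2}$ in the approximate functional equation of $L'$, and which are simply carried along inertly). First I would detect the squarefree condition on $d$ by M\"obius inversion, writing $\mu^2(d)=\sum_{\ell^2\mid d}\mu(\ell)$, so that $S(N',h)$ becomes a double sum over $\ell$ coprime to $2N$ and over all $d$ coprime to $2N\ell$, with inner weight of effective length $X/\ell^2$ in $d$. Since $\chi_{8\ell^2 d}(m)=\chi_{8d}(m)$ once $(\ell,m)=1$, the inner sum is a smoothly weighted sum of the quadratic symbol $\chi_{8d}(N'n_1n_2)$ in $d$, and I would apply to it the Poisson summation formula for quadratic characters in residue classes modulo $8N'n_1n_2$ (the version used in \cite{SoundYoung}, originating in \cite{SoundMoments}). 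This writes the inner $d$-sum as a zero-frequency term plus a sum over nonzero frequencies $k$ of Gauss sums $G_k(N'n_1n_2)$ against the Fourier transform of the weight, the latter decaying rapidly once $|k|\gg N'n_1n_2(\log X)^{O(1)}/X$.

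The zero-frequency term survives only when $N'n_1n_2$ is a perfect square --- this is the origin of the condition $N'n_1n_2=\square$ in the claimed main term --- and evaluating it is a routine local computation: summing the M\"obius parameter $\ell$ against the relevant Euler factors and imposing coprimality of $d$ with $2NN'n_1n_2$ produces exactly the density $\tfrac{4}{\pi^2}$ of odd squarefree integers together with the local corrections $\prod_{p\mid Nn_1n_2}\frac{p}{p+1}$, while the $d$-integral of the weight collapses to $X\,h_1(n_1,n_2)$. The logarithmic factors $\log(U_1/n_1)$, $\log(U_2/n_2)$ sitting inside $h$ pass through this step unchanged and reappear in $h_1$; they are what eventually produce the cube of a logarithm in Theorem \ref{thm1}.

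For the nonzero frequencies I would argue as in \cite{SoundYoung}. Writing the Gauss sums multiplicatively and using quadratic reciprocity, the remaining sum over $n_1,n_2$ and $k$ reorganizes --- up to lower-order contributions --- into an un-differentiated, shifted second moment of $L(\tfrac12+it,f\otimes\chi_{8d})$ (together with $\zeta$- and $\sym^2 f$-factors) over a family of conductors of size $\ll N'n_1n_2/X$, hence over a range shorter than $X$ by the hypothesis $U_1U_2\le X^2$. The point, emphasized in the introduction, is that this dual object carries no logarithmic weights, so it is genuinely smaller than a derivative moment, and --- after an AM--GM step turning a product of two twisted $L$-values into a sum of second moments --- it is bounded by Conjecture \ref{conjecture1} (equivalently, by GRH via the estimates of \cite{SoundMoments}). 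Combined with the pointwise bound $|G_k(N'n_1n_2)|\ll (N'n_1n_2)^{1/2}$ up to a divisor factor, summation over $\ell$, and summation over the short $k$-ranges, this yields the error term $O_{\kappa,N}\big((U_1U_2)^{1/4}X^{1/2}(\log X)^{11}\big)$, the exponent $11$ accumulating from the divisor function, the weight $h$, and the passage to the dual moment.

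The main obstacle is precisely the treatment of the nonzero frequencies: since the length $U_1U_2$ of the $(n_1,n_2)$-sum is comparable to the conductor $N'n_1n_2$, Poisson summation returns a dual sum of the same analytic conductor --- the ``deadlock'' --- and a purely elementary estimate (via, say, Heath-Brown's quadratic large sieve) is too lossy to beat the main term by the margin required, losing roughly a full power of $X$. The only escape, and the sole use of GRH in the proposition, is to recognize the dual sum as an un-differentiated moment and invoke the conditional bound of Conjecture \ref{conjecture1}. A secondary but still delicate issue is the bookkeeping at the bad primes $2$ and $p\mid N$: pinning down the constant as precisely $\tfrac{4}{\pi^2}$ and the local factors as precisely $\frac{p}{p+1}$ requires care in how the M\"obius variable $\ell$, the coprimality conditions on $d$, and the $2$-adic normalization of the Gauss sums interact.
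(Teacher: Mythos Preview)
Your proposal is correct and follows exactly the approach the paper indicates: the paper in fact omits the proof of Proposition~\ref{prop1}, stating that it is nearly identical to Proposition~3.1 of Soundararajan--Young \cite{SoundYoung} (modulo the generalization from full level to arbitrary odd level~$N$), with Poisson summation (Lemma~\ref{PoissonSummation}) producing the main term from the zero frequency and Conjecture~\ref{conjecture1} bounding the dual sum over nonzero frequencies. One small correction: the Poisson summation formula for quadratic characters you invoke originates in \cite{Sound2000}, not \cite{SoundMoments}.
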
 This proposition and its proof are nearly identical to the main proposition from the paper of Soundararajan and Young \cite{SoundYoung} (see proposition 3.1 and the remarks in \S 5 of that paper) except for minor details of generalizing from full level to arbitrary level $N$, so we omit the proof.  The main idea is to use Poisson summation (see lemma \ref{PoissonSummation}) to evaluate the sum over discriminants $d$, and conjecture \ref{conjecture1} to bound the dual sum thereby obtained.  

We now proceed to the computation of $\mathrm{I}_U(f)$ and $\mathrm{II}_U(f)$.  Let $h(x,y,z) = F(8x/X)W(y/U)W(z/8x).$  In the notation of proposition \ref{prop1} we have by the approximate functional equation that \[\mathrm{I}_U(f) = 2S(1,h)-2i^\kappa \eta S(N,h).\]  For notational ease, set $G(u) := \Gamma(\kappa/2+u)\Gamma(\kappa/2)^{-1}(\sqrt{N}/2 \pi)^{u}$ which, recall, appears in the function $W(x)$.  Let $\widetilde{F}(v) = \int_0^\infty F(x) x^{v-1}\,dx$ denote the Mellin transform and set \[Z_{N'}(u,v) = \sum_{\substack{(n_1n_2,2)=1 \\ N'n_1n_2=\square}} \frac{\lambda_f(n_1)\lambda_f(n_2)}{n_1^{1/2+u} n_2^{1/2+v}} \prod_{p|Nn_1n_2}\frac{p}{p+1}.\]  Applying proposition \ref{prop1} and Mellin inversion, we find that \begin{equation}\label{integral1}\mathrm{I}_U(f) = \frac{X}{\pi^2} \frac{1}{(2 \pi i )^2} \int_{(1)}\int_{(1)} \frac{G(u)G(v)}{u^2v^2} U^uX^v \widetilde{F}(1+v)\left(Z_N(u,v)-i^\kappa \eta Z_1(u,v)\right)\,du\,dv+O_{\kappa,N}(X).\end{equation}

We compute for either $N'= 1$ or $N$ that $Z_{N'}(u,v)$ has the Euler product  \begin{equation*} \begin{split} Z_{N'}(u,v)=\prod_{p \nmid 2N} \left( 1+\frac{p}{p+1} \left[ \frac{1}{2}\left(1-\frac{\lambda_f(p)}{p^{1/2+u}} +\frac{1}{p^{1+2u}}\right)^{-1}\left(1-\frac{\lambda_f(p)}{p^{1/2+v}}+\frac{1}{p^{1+2v}}\right)^{-1} \right. \right. \\ \left. \left.+  \frac{1}{2}\left(1+\frac{\lambda_f(p)}{p^{1/2+u}}+\frac{1}{p^{1+2u}}\right)^{-1}\left(1+\frac{\lambda_f(p)}{p^{1/2+v}}+\frac{1}{p^{1+2v}}\right)^{-1} -1\right]\right) \end{split} \end{equation*}\begin{equation}\label{prod1} \begin{split}\times \prod_{p|N} \frac{p}{p+1} \left[ \frac{1}{2}\left(1-\frac{\lambda_f(p)}{p^{1/2+u}} \right)^{-1}\left(1-\frac{\lambda_f(p)}{p^{1/2+v}}\right)^{-1} + (-1)^{\ord_p(N')}  \frac{1}{2}\left(1+\frac{\lambda_f(p)}{p^{1/2+u}}\right)^{-1}\left(1+\frac{\lambda_f(p)}{p^{1/2+v}}\right)^{-1} \right] .\end{split} \end{equation}  Hence we have that \[Z_{N'}(u,v) = \zeta(1+u+v)L(1+2u, \sym^2 f)L(1+u+v, \sym^2 f)L(1+2v, \sym^2 f) Z_{N'}^*(u,v),\] where $Z_N^*(u,v)$ and $Z_1^*(u,v)$ are given by some absolutely convergent Euler products and are uniformly bounded in the region $\real(u), \real(v) \geq -1/4+\varepsilon$ in $u,v,\kappa$ and $N$.  Set $Z(u,v) := Z_N(u,v)-i^\kappa \eta Z_1(u,v)$ and \begin{equation}\label{Z1}Z^*(u,v) :=Z_N^*(u,v)-i^\kappa \eta Z_1^*(u,v).\end{equation}  A careful inspection of \eqref{Z1} and \eqref{prod1}, using positivity of $(1\pm \lambda_f(p)p^{-1/2})^{-1}$ shows that $Z^*(0,0)=0$ if and only if $\varepsilon(f)=1$ and $N$ is a square.  

We now compute by shifting contours of \eqref{integral1}.  Start the lines of integration at $\real(u)=\real(v) = 1/10$, and begin the computation with shifting the $v$ integration to the $\real(v) = -1/5$ line.  We encounter poles at $v=0$ and $v=-u$.  The remaining double integral on the lines $\real(v)=-1/5$ and $\real(u) = 1/10$ is $\ll_{\kappa,N,\varepsilon} X^{-1/10+\varepsilon}$, and the contribution from the simple pole at $v=-u$ is $\ll_{\kappa,N} 1$.  The main term comes from double pole at $v = 0$, giving \begin{equation*}\begin{split} I_U = \frac{ X  }{\pi^2} \widetilde{F}(1)  \frac{1}{2 \pi i} \int_{(1/10)} \frac{G(u)}{u^2} U^{u} Z(u,0)\left( \log X  +G'(0) +\frac{\widetilde{F}'(1)}{\widetilde{F}(1)}  + \frac{ \frac{d}{dv}Z(u,v)\vert_{v=0}}{Z(u,0)}\right)\,du + O_{\kappa,N}(X).\end{split}\end{equation*} Now $Z(u,0)$ has a single pole and $\frac{d}{dv}Z(u,v)\vert_{v=0}$ has a double pole.  Combine these with $u^2$ in the denominator, and we encounter a triple and quadruple pole.  The residue of the triple pole of \[ \frac{G(u)}{u^2}U^u Z(u,0)\] at $u=0$ is given by \[L(1,\sym^2 f)^3Z^*(0,0)\left(\frac{1}{2}\log^2 U + \left[ \frac{\Gamma'(\kappa/2)}{\Gamma(\kappa/2)} + \log \frac{\sqrt{N}}{2 \pi}+\gamma+3\frac{L'(1,\sym^2 f)}{L(1,\sym^2 f)}+ \frac{\frac{d}{du}Z^*(u,0)\vert_{u=0}}{Z^*(0,0)}\right] \log U +O_{\kappa,N}(1)\right)   .\]  The residue of the quadruple pole of \[ \frac{G(u)}{u^2}U^u \frac{d}{dv}Z(u,v)\vert_{v=0} \] at $u=0$ is given by \[ -L(1,\sym^2 f)Z^*(0,0)\left(\frac{1}{6} \log^3 U + \frac{1}{2} \left[\frac{\Gamma'(\kappa/2)}{\Gamma(\kappa/2)}+ \log \frac{\sqrt{N}}{2\pi}\right] \log^2 U + O_{\kappa,N}(\log U)\right).  \]  By shifting the line of integration to $\real(u) = -1/5$, we find that the the remaining integral is $\ll_{\kappa,N,\varepsilon} X^{-1/5+\varepsilon},$ hence collecting the above terms coming from residues, we find that \begin{equation*}\begin{split} \mathrm{I}_U(f) = \frac{X}{\pi^2} L(1,\sym^2 f)^3 Z^*(0,0)\widetilde{F}(1) \left(\frac{1}{2}\log X (\log U)^2 -\frac{1}{6} \log^3 U  + \left[ \frac{\Gamma'(\kappa/2)}{\Gamma(\kappa/2)}+ \log \frac{\sqrt{N}}{2\pi}+\gamma+3\frac{L'(1,\sym^2 f)}{L(1,\sym^2 f)}\right. \right. \\ \left. \left.+ \frac{\frac{d}{du}Z^*(u,0)\vert_{u=0}}{Z^*(0,0)}\right] \log X \log U  +  \frac{1}{2} \frac{\widetilde{F}'(1)}{\widetilde{F}(1)} (\log U)^2 +O_{\kappa,N}(\log X)\right). \end{split}\end{equation*}

The sum $\mathrm{II}_U(f)$ is computed similarly, but with a different choice of $h(x,y,z)$.  As above, the main term comes from the intersection of the two polar divisors $u=0$ and $v=0$.  One finds \begin{equation*}\begin{split} \mathrm{II}_U(f)  =   \frac{ X}{\pi^2} L(1,\sym^2 f)^3Z^*(0,0) \widetilde{F}(1)\left(\frac{1}{3}\log^3U + \left[ \frac{\Gamma'(\kappa/2)}{\Gamma(\kappa/2)}+ \log \frac{\sqrt{N}}{2\pi} + \gamma +3\frac{L'(1,\sym^2 f)}{L(1, \sym^2 f)} \right. \right. \\  \left. \left.+\frac{\frac{d}{du}Z^*(u,0)\vert_{u=0}}{Z^*(0,0)}\right]\log^2 U + O_{\kappa,N}(\log U) \right).\end{split}\end{equation*}

We now give an upper bound for the sum $\mathrm{III}_U(f)$ which, recall, involves $\mathcal{B}_U$.  We have\[\mathcal{B}_U(1/2,8d) = \left(1-i^\kappa \eta \chi_{8d}(N)\right)\frac{1}{2 \pi i} \int_{(2)} \frac{G(s)}{s} L(1/2+s,f \otimes \chi_{8d}) \left(\frac{(8d)^s-U^s}{s}\right)\,ds.\]  Recall that $L(1/2+s,f \otimes \chi_{8d})$ has root number $-1$ and vanishes at $s=0$, therefore the integrand is actually entire and we move the line of integration to the $\real(s) = 1/\log X$ line.   On this line \[\left| \frac{(8d)^s - U^s}{s}\right| \ll \log \left(8d/U\right),\] uniformly in $s$, thus \[\mathcal{B}_U(1/2,8d) \ll |\log 8d/U | \int_{-\infty}^\infty \frac{\left|G\left(\frac{1}{\log X}+it\right)\right|}{\left|\frac{1}{\log X}+it\right|} \left|L\left(\frac{1}{2}+ \frac{1}{\log X}+it,f \otimes \chi_{8d}\right)\right|\,dt.\]  Inserting this in $\mathrm{III}_U(f)$ we have that \begin{equation}\label{Bsum}\begin{split} \mathrm{III}_U(f) \ll (\log X/U)^2\int_{-\infty}^\infty \int_{-\infty}^\infty \frac{\left| G\left(\frac{1}{\log X}+it_1\right)G\left(\frac{1}{\log X}+it_2\right)\right|}{\left|\left(\frac{1}{\log X}+it_1\right)\left(\frac{1}{\log X}+it_2\right)\right|} \\ \times  \sum_{\substack{(d,2N\square)=1 \\ 0<8d \leq X}}\left|L\left(\frac{1}{2}+ \frac{1}{\log X}+it_1,f \otimes \chi_{8d}\right)L\left(\frac{1}{2}+ \frac{1}{\log X}+it_2,f \otimes \chi_{8d}\right)\right| \,dt_1\,dt_2.\end{split}\end{equation} Use Cauchy-Schwarz to split the sum over $d$ above in two, so that it suffices to bound \[ \int_{-\infty}^\infty \frac{\left| G\left(\frac{1}{\log X}+it\right)\right|}{\left|\left(\frac{1}{\log X}+it\right)\right|} \left( \sum_{\substack{(d,2N\square)=1 \\ 0<8d \leq X}}\left|L\left(\frac{1}{2}+ \frac{1}{\log X}+it,f \otimes \chi_{8d}\right)\right|^2\right)^{1/2} \,dt. \] We have that  \[\int_{-\infty}^\infty \frac{\left| G\left(\frac{1}{\log X}+it\right)\right|^\frac{1}{2}}{\left|\frac{1}{\log X}+it\right|}\,dt \ll \log \log X,\] and \[\left| G\left(\frac{1}{\log X}+ it\right) \right| \sum_{\substack{ (d,2N\square)=1 \\ 0<8d\leq X}} \left|L\left(\frac{1}{2}+ \frac{1}{\log X}+it,f \otimes \chi_{8d}\right)\right|^2 \ll_{f,\varepsilon} X \left(\log X\right)^{1+\varepsilon} \] uniformly in $t$ by conjecture \ref{conjecture1} and the sharp cut-off in $\left| G(1/\log X+it ) \right|$ for large $t$.  Bringing these estimates together we find that \[ \mathrm{III}_U(f) \ll_{\kappa,N,\varepsilon} X (\log X)^{1+\varepsilon} (\log X/U)^2.\]  Note that in contrast to the work of Soundararajan and Young, shifted moments are not necessary to prove our theorem.  

Finally, set $U = X/(\log X)^{100}$.  Note that \[ \left( \log X - \frac{2}{3} \log U\right) \log^2 U = \frac{1}{3} \log^3 X + O_{\varepsilon} ( (\log X)^{1+\varepsilon}),\]   so that pulling together our evaluations of $\mathrm{I}_U(f),\mathrm{II}_U(f)$ and $\mathrm{III}_U(f)$ we find  \begin{equation*} \begin{split} \sum_{(d,2N\square)=1} L'(1/2,f\otimes \chi_{8d})^2F(8d/X) =   \frac{X}{\pi^2}  L(1,\sym^2 f)^3 Z^*(0,0)\widetilde{F}(1) \left(\frac{1}{3} \log^3 X + \left[\frac{\Gamma'(\kappa/2)}{\Gamma(\kappa/2)}+ \log \frac{\sqrt{N}}{2\pi} + \gamma   \right. \right. \\ \left .\left.  +3\frac{L'(1,\sym^2 f)}{L(1,\sym^2 f)}+\frac{\frac{d}{du}Z^*(u,0)\vert_{u=0}}{Z^*(0,0)} + \frac{\widetilde{F}'(1)}{\widetilde{F}(1)}\right] \log^2 X + O_{\kappa,N,\varepsilon}(X (\log X)^{1+\varepsilon}) \right).\end{split}\end{equation*} \end{proof}

\section{Proof of Theorem \ref{thm2}}\label{ProofofTheorem2}

We turn to the moment for two different forms $f$ and $g$ of levels $N_1$ and $N_2$ respectively.  Set $N=N_1N_2$.  The proof of theorem \ref{thm2} is a slight variation on the proof of theorem \ref{thm1}.  \begin{proof}[Proof of theorem \ref{thm2}.] Assume GRH or conjecture \ref{conjecture1}, and that $U \leq X / (\log X)^{100}$.  We split the sum $L'(1/2,f\otimes \chi_{8d}) = \mathcal{A}_U(1/2,f \otimes \chi_{8d}) + \mathcal{B}_U(1/2,f \otimes \chi_{8d}),$ where $\mathcal{A}_U(1/2, f\otimes \chi_{8d})$ and $\mathcal{B}_U(1/2,f \otimes \chi_{8d})$ are defined at the outset of Section \ref{ProofofTheorem1}.  Take the decomposition \begin{equation}\label{fandg}\begin{split}  L'(1/2,f \otimes \chi_{8d})L'(1/2,g \otimes \chi_{8d})  =  L'(1/2,f \otimes \chi_{8d})\mathcal{A}_U(1/2,g \otimes \chi_{8d}) + \mathcal{A}_U(1/2,f \otimes \chi_{8d})L'(1/2,g \otimes \chi_{8d})  \\   - \mathcal{A}_U(1/2,f \otimes \chi_{8d})\mathcal{A}_U(1/2,g \otimes \chi_{8d}) + \mathcal{B}_U(1/2,f \otimes \chi_{8d})\mathcal{B}_U(1/2,g \otimes \chi_{8d}). \end{split}\end{equation} 
Summing over $(d,2N\square)=1$, we have the 4 sums which we denote by \[\mathrm{I}_U(f,g):=\sum_{(d,2N\square)=1} L'(1/2,f \otimes \chi_{8d})\mathcal{A}_U(1/2,g \otimes \chi_{8d}) F(8d/X) ,\] \[\mathrm{I}_U(g,f):=\sum_{(d,2N\square)=1} \mathcal{A}_U(1/2,f \otimes \chi_{8d})L'(1/2,g \otimes \chi_{8d})  F(8d/X) ,\] \[\mathrm{II}_U(f,g):=\sum_{(d,2N\square)=1} \mathcal{A}_U(1/2,f \otimes \chi_{8d})\mathcal{A}_U(1/2,g \otimes \chi_{8d})  F(8d/X) ,\] and \[\mathrm{III}_U(f,g):=\sum_{(d,2N\square)=1} \mathcal{B}_U(1/2,f \otimes \chi_{8d})\mathcal{B}_U(1/2,g \otimes \chi_{8d}) F(8d/X) ,\] so that \[\sum_{(d,2N\square)=1} L'(1/2,f\otimes \chi_{8d})L'(1/2,g \otimes \chi_{8d})F(8d/X) = \mathrm{I}_U(f,g) + \mathrm{I}_U(g,f) - \mathrm{II}_U(f,g) + \mathrm{III}_U(f,g).\]  We can compute precise asymptotic estimates for $\mathrm{I}_U(f,g),$ $\mathrm{I}_U(g,f)$ and $\mathrm{II}_U(f,g)$, meanwhile $\mathrm{III}_U(f,g)$ can be reduced by Cauchy-Schwarz to the sum $\mathrm{III}_U(f)$ from the proof of theorem \ref{thm1}.  Hence \[\mathrm{III}_U(f,g) \ll_{\kappa,N,\varepsilon} X (\log X)^{1+\varepsilon}.\]

We next state the proposition which allows us to compute the sums $\mathrm{I}_U(f,g),$ $\mathrm{I}_U(g,f)$ and $\mathrm{II}_U(f,g)$.  Let $N'$ be one of the four choices $N' = 1, N_1,N_2,$ or $N$.  Define \[S_{f,g}(N',h) := \sum_{(d,2N\square)=1} \sum_{n_1=1}^\infty\sum_{n_2=1}^\infty \frac{\lambda_f(n_1)\lambda_g(n_2)}{\sqrt{n_1n_2}}\chi_{8d}(N'n_1n_2)h(d,n_1,n_2).\] \begin{proposition}\label{prop2} Assume GRH or conjecture \ref{conjecture1}.  Let $X,U_1,U_2$ large, $U_1U_2\leq X^2$, and $N =N_1N_2$ odd.  Let $h(x,y,z)$ be a smooth function on $\R_{>0}^3,$ with compact support in $x$, having all partial derivatives extending continuously to the boundary, satisfying \[x^iy^jz^kh^{(i,j,k)}(x,y,z) \ll_{i,j,k}  \left(1+\frac{x}{X}\right)^{-100}\left(\log \frac{U_1}{y}\right) \left(1+\frac{y}{U_1}\right)^{-100} \left( \log \frac{U_2}{z}\right) \left(1+\frac{z}{U_2}\right)^{-100}.\] Set $h_1(y,z) = \int_0^\infty h(xX,y,z)\,dx.$ Then \[S_{f,g}(N',h) = \frac{4X}{\pi^2} \sum_{\substack{(n_1n_2,2)=1 \\ N'n_1n_2 = \square}} \frac{\lambda_f(n_1) \lambda_g(n_2)}{\sqrt{n_1n_2}}\prod_{p|Nn_1n_2} \frac{p}{p+1} h_1(n_1,n_2) + O_{f,g}\left( ( U_1U_2)^{1/4} X^{1/2} (\log X)^{11}\right).\]  \end{proposition}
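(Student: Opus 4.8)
The plan is to follow the proof of proposition \ref{prop1} --- equivalently, that of proposition 3.1 of \cite{SoundYoung} --- essentially line for line, the one genuinely new point being how the mixed product of twisted $L$-values of $f$ and $g$ is treated in the error term. First I would detect the squarefreeness of $d$ via the identity $\sum_{\ell^2 \mid d}\mu(\ell)$, which is $1$ when $d$ is squarefree and $0$ otherwise, substitute $d = \ell^2 e$, and use the coprimality of $\ell$ to $2N n_1 n_2$ to replace $\chi_{8d}(N' n_1 n_2)$ by $\chi_{8e}(N' n_1 n_2)$; this reduces $S_{f,g}(N',h)$ to a linear combination of sums of the shape $\sum_{n_1,n_2}\frac{\lambda_f(n_1)\lambda_g(n_2)}{\sqrt{n_1 n_2}}\sum_{e}\chi_{8e}(N' n_1 n_2)\,\widetilde h(\ell^2 e, n_1, n_2)$. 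To the inner sum over $e$ I apply Poisson summation modulo $8 N' n_1 n_2$ (lemma \ref{PoissonSummation}), as in the one-form case, which splits it into a zero-frequency contribution and a dual sum over the nonzero frequencies $k$.

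The zero-frequency term produces the main term. The relevant complete character sum is a Gauss-sum-type quantity that vanishes unless $N' n_1 n_2$ is a perfect square, in which case it evaluates --- after carrying out the standard computation of the arithmetic factor, namely the density $\tfrac{1}{\zeta(2)}$ of squarefree $d$ together with the local corrections at the ramified and coprimality-constrained primes, which combine to give $\prod_{p \mid N n_1 n_2}\frac{p}{p+1}$ --- to the stated main term $\frac{4X}{\pi^2}\sum_{N' n_1 n_2 = \square}\frac{\lambda_f(n_1)\lambda_g(n_2)}{\sqrt{n_1 n_2}}\prod_{p\mid N n_1 n_2}\frac{p}{p+1}\,h_1(n_1,n_2)$, with $h_1(n_1,n_2) = \int_0^\infty h(xX, n_1, n_2)\,dx$ the residual archimedean integral. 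This step is identical to that of proposition \ref{prop1} once $\lambda_f(n_2)$ is replaced by $\lambda_g(n_2)$; in particular the sign $(-1)^{\ord_p(N')}$ at primes $p \mid N$ appears in exactly the same way for each of the four choices $N' \in \{1, N_1, N_2, N\}$, and the extra factors $\log(U_i / n_i)$ allowed in the hypothesis on $h$ contribute only additional powers of $\log X$, which are harmlessly absorbed into the error.

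For the dual sum one reassembles the $n_1$- and $n_2$-sums, after shifting contours in the associated Mellin transforms, into a product of shifted twisted $L$-functions $L(\tfrac12 + it_1, f \otimes \chi_{8d''})\,L(\tfrac12 + it_2, g \otimes \chi_{8d''})$ averaged over an auxiliary discriminant parameter $d''$ of length $\ll U_1 U_2 / X$, together with harmless sums over $\ell$ and over the Poisson frequency. Here the only deviation from \cite{SoundYoung} is that the moment to be bounded mixes $f$ and $g$: I would apply Cauchy--Schwarz in the form $|L(\tfrac12 + it_1, f \otimes \chi_{8d''})\,L(\tfrac12 + it_2, g \otimes \chi_{8d''})| \le \tfrac12\bigl( |L(\tfrac12 + it_1, f \otimes \chi_{8d''})|^2 + |L(\tfrac12 + it_2, g \otimes \chi_{8d''})|^2 \bigr)$ and then invoke conjecture \ref{conjecture1} separately for $f$ and for $g$ (each of which follows from GRH). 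Combined with the rapid decay of the cut-offs in $t_1$ and $t_2$ and the standard bounds for the oscillatory integrals coming from Poisson summation, this yields the error term $O_{f,g}\bigl( (U_1 U_2)^{1/4} X^{1/2} (\log X)^{11} \bigr)$, exactly as in the one-form case.

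I expect the main obstacle to be purely bookkeeping: propagating the two distinct levels $N_1$ and $N_2$ and their product $N$ through the congruence conditions, the Poisson modulus, and the Gauss-sum evaluation, and checking that the local factors at primes dividing $N$ emerge with the correct signs $(-1)^{\ord_p(N')}$ for all four values of $N'$. There is no new analytic input beyond what proposition \ref{prop1} already requires, since after the Cauchy--Schwarz step the quadratic-twist families of $f$ and of $g$ are handled entirely independently; accordingly, as with proposition \ref{prop1}, I would present these modifications in abbreviated form rather than reproduce the full argument of \cite{SoundYoung}.
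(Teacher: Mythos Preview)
Your proposal is correct and matches the paper's approach: the paper explicitly omits the proof, stating that proposition \ref{prop2} is a slight variation on proposition \ref{prop1} (itself essentially proposition 3.1 of \cite{SoundYoung}), with Poisson summation producing the main term and conjecture \ref{conjecture1} bounding the dual sum. Your one added observation --- separating the mixed product $|L(\tfrac12+it_1,f\otimes\chi)L(\tfrac12+it_2,g\otimes\chi)|$ via Cauchy--Schwarz so that conjecture \ref{conjecture1} may be applied to $f$ and $g$ individually --- is exactly the minor modification the paper leaves implicit.
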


Proposition \ref{prop2} is a slight variation on proposition \ref{prop1}, so we omit the proof.  The reader should take note of the remarks following proposition \ref{prop1}, as they apply just as well to proposition \ref{prop2}.

Now we proceed to use this proposition to evaluate $\mathrm{I}_U(f,g),$ $\mathrm{I}_U(g,f)$ and $\mathrm{II}_U(f,g)$.  Take for example the case $\mathrm{I}_U(f,g)$, for which we set $h(d,n_1,n_2) = F(8d/X)W(n_1/U)W(n_2/8d).$  By the approximate functional equation (lemma \ref{approxfe}) with $Z=1$ we have that \begin{equation}\label{thm2intermsofS}\begin{split} \sum_{(d,2N\square)=1} L'(1/2,f\otimes \chi_{8d})\mathcal{A}_U(1/2,g\otimes \chi_d)F(8d/X) \hspace{2in} \\ = S_{f,g}(1,h) -i^{\kappa_1}\eta_f S_{f,g}(N_1,h) -i^{\kappa_2}\eta_g S_{f,g}(N_2,h)+i^{\kappa_1+\kappa_2}\eta_f\eta_gS_{f,g}(N,h).\end{split}\end{equation}  Likewise, $\mathrm{I}_U(g,f)$ and $\mathrm{II}_U(f,g)$ are evaluated the same way with $h(d,n_1,n_2) = F(8d/X)W(n_1/8d)W(n_2/U)$ and $h(d,n_1,n_2) = F(8d/X)W(n_1/U)W(n_2/U),$ respectively.  

Next, we evaluate the main terms of the various $S_{f,g}$ in \eqref{thm2intermsofS} by contour integration.  We set $G_f(u) := \Gamma(\kappa_1/2+u)\Gamma(\kappa_1/2)^{-1}(\sqrt{N_1}/2 \pi)^{u}$ to be the Mellin transform of $W_1(x),$ and similarly for $G_g$.  For $N' =1,N_1,N_2$ or $N$ define the Dirichlet series $Z_{N'}(u,v)$ by \[Z_{N'}(u,v) := \sum_{\substack{(n_1n_2,2)=1 \\ N'n_1n_2=\square}} \frac{\lambda_f(n_1)\lambda_f(n_2)}{n_1^{1/2+u} n_2^{1/2+v}} \prod_{p|Nn_1n_2}\frac{p}{p+1}.\] One has therefore that \begin{equation}\label{integral2} S_{f,g}(N',h) =  \frac{X}{2\pi^2} \frac{1}{(2 \pi i )^2} \int_{(1)}\int_{(1)} \frac{G_g(u)G_f(v)}{u^2v^2} U^uX^v \widetilde{F}(1+v)Z_{N'}(u,v)\,du\,dv+O_{\kappa,N}(X).  \end{equation}  Let $\chi_{0,N_i}$ be the trivial Dirichlet character mod $N_i$ for $i=1,2$, that is to say, \[\chi_{0,N_i}(p) = \begin{cases} 1 & \text{ if } p \nmid N_i \\ 0 & \text{ if } p \mid N_i. \end{cases} \]  Then the Euler product for $Z_{N'}(u,v)$ is given by \begin{equation*} \begin{split} Z_{N'}(u,v)= \prod_{p \nmid 2N} \left( 1+\frac{p}{p+1} \left[ \frac{1}{2}\left(1-\frac{\lambda_f(p)}{p^{1/2+u}} +\frac{1}{p^{1+2u}}\right)^{-1}\left(1-\frac{\lambda_g(p)}{p^{1/2+v}}+\frac{1}{p^{1+2v}}\right)^{-1} \right. \right. \\ \left. \left.+  \frac{1}{2}\left(1+\frac{\lambda_f(p)}{p^{1/2+u}}+\frac{1}{p^{1+2u}}\right)^{-1}\left(1+\frac{\lambda_g(p)}{p^{1/2+v}}+\frac{1}{p^{1+2v}}\right)^{-1} -1\right]\right) \end{split} \end{equation*}\begin{equation}\label{prod2} \begin{split}\times \prod_{p|N} \frac{p}{p+1} \left[ \frac{1}{2}\left(1-\frac{\lambda_f(p)}{p^{1/2+u}} +\frac{\chi_{0,N_1}(p)}{p^{1+2u}}\right)^{-1}\left(1-\frac{\lambda_g(p)}{p^{1/2+v}}+\frac{\chi_{0,N_2}(p)}{p^{1+2v}}\right)^{-1} \right.  \\  \left.+ (-1)^{\ord_p(N')}  \frac{1}{2}\left(1+\frac{\lambda_f(p)}{p^{1/2+u}}+\frac{\chi_{0,N_1}(p)}{p^{1+2u}}\right)^{-1}\left(1+\frac{\lambda_g(p)}{p^{1/2+v}}+\frac{\chi_{0,N_2}(p)}{p^{1+2v}}\right)^{-1} \right]. \end{split} \end{equation} 

If $\alpha_f(p)$ and $\beta_f(p)$ are the local roots of $f$ with $\alpha_f(p)+\beta_f(p)=\lambda_f(p)$, then we define for $\real(s)>1$ \[L(s,f\otimes g) = \prod_p \left(1-\frac{\alpha_f(p)\alpha_g(p)}{p^{s}}\right)^{-1}\left(1-\frac{\alpha_f(p)\beta_g(p)}{p^{s}}\right)^{-1}\left(1-\frac{\beta_f(p)\alpha_g(p)}{p^{s}}\right)^{-1}\left(1-\frac{\beta_f(p)\beta_g(p)}{p^{s}}\right)^{-1}, \] and for $\real(s)\leq 1$ by analytic continuation.  Then in any of the four cases $N'=1,N_1,N_2,$ or $N,$ we have that \[Z_{N'}(u,v) = L(1+u+v,f \otimes g) L(1+2u, \sym^2 f)L(1+2v, \sym^2 g) Z^*_{N'}(u,v),\] where $Z_{N'}^*(u,v)$ is given by some absolutely convergent Euler product which is uniformly bounded in the region $\real(u),\real(v) \geq -1/4+\varepsilon$.  Set $Z(u,v) = Z_1(u,v) -i^{\kappa_1}\eta_f Z_{N_1}(u,v) -i^{\kappa_2}\eta_g Z_{N_2}(u,v) + Z_{N}(u,v),$ and \begin{equation}\label{Z2}Z^*(u,v) = Z^*_1(u,v) -i^{\kappa_1} \eta_f Z^*_{N_1}(u,v) -i^{\kappa_2}\eta_g Z^*_{N_2}(u,v) + i^{\kappa_1+\kappa_2}\eta_f\eta_gZ^*_{N}(u,v).\end{equation}  A careful inspection of \eqref{Z2} and \eqref{prod2} using positivity of $(1\pm \lambda_f(p)p^{-1/2})^{-1}$ shows that $Z^*(0,0)=0$ if and only if either root number $w(f)$ or $w(g) =1$, and the corresponding $N_1$ or $N_2$ is a square.  

With this information about $Z_{N'}(u,v)$, one shifts contours of \eqref{integral2} as in the proof of theorem \ref{thm1} to compute the various $S_{f,g}$.  We find that \begin{eqnarray*} \mathrm{I}_U(f,g)  &=& \sum_{(d,2N\square)=1}  L'(1/2,f \otimes \chi_{8d})\mathcal{A}_U(1/2,g \otimes \chi_{8d}) F(8d/X) \\  & = & \frac{ X }{2\pi^2} L(1,\sym^2 f)L(1,\sym^2 g)L(1,f \otimes g)Z^*(0,0) \widetilde{F}(1)  \log X  \log U  + O_{f,g}(X \log X),\end{eqnarray*} and similarly for $\mathrm{I}_U(g,f)$.  We also compute \begin{eqnarray*} \mathrm{II}_U(f,g) &= &  \sum_{(d,2N\square)=1} \mathcal{A}_U(1/2,f \otimes \chi_{8d})\mathcal{A}_U(1/2,g \otimes \chi_{8d}) \\ & = & \frac{ X}{2\pi^2}  L(1,\sym^2 f)L(1,\sym^2 g)L(1,f \otimes g)Z^*(0,0)\widetilde{F}(1) \log^2 U + O_{f,g}(X\log U).\end{eqnarray*}  Finally, setting $U = X/(\log X)^{100}$ we obtain \begin{eqnarray*}& & \sum_{(d,2N\square)=1} L'(1/2,f \otimes \chi_{8d})L'(1/2,g \otimes \chi_{8d}) F(8d/X) \\ & = & \frac{ X}{2\pi^2}  L(1,\sym^2 f)L(1,\sym^2 g)L(1,f \otimes g)Z^*(0,0)\widetilde{F}(1) \log^2 X   + O_{f,g,\varepsilon} \left( X (\log X)^{1+\varepsilon} \right) .\end{eqnarray*}   \end{proof}

\section{Proof of Theorem \ref{thm3}}\label{ProofofTheorem3}

In this section, we apply the techniques of the previous two sections to the first moment of $L'(1/2,f \otimes \chi_{8d})$ over twists, keeping careful track of the dependence on both the weight $\kappa$ and the level $N$.  

\begin{proof}[Proof of Theorem \ref{thm3}.] We prove the theorem by splitting the sum into a main part and tail, and use the asymmetric approximate functional equation (lemma \ref{approxfe}) with $Z=N^{1/2}$.  Assume GRH or conjecture \ref{conjecture2}, and that both $\kappa N \leq X$ and $U \leq X / (\log X\kappa N)^{\frac{17}{4}(A+6)}$ for $A>0$ fixed.  Define the main part \[\mathcal{A}_U(1/2,f\otimes \chi_{8d}) = \sum_{n\geq 1} \frac{\lambda_f(n)\chi_{8d}(n)}{n^{1/2}}W_Z\left(\frac{n}{U}\right)-i^\kappa \eta \chi_{8d}(N)\sum_{n \geq 1} \frac{\lambda_f(n)\chi_{8d}(n)}{n^{1/2}}W_{Z^{-1}}\left(\frac{n}{U}\right),\] and the tail $\mathcal{B}_U(1/2,f \otimes \chi_{8d})=L'(1/2,f\otimes \chi_{8d})-\mathcal{A}_U(1/2,f\otimes \chi_{8d})$ as in Section \ref{ProofofTheorem1}.    Following Soundararajan and Young again, we give the analogue of propositions \ref{prop1} and \ref{prop2} for the first moment.  Let $N'=1$ or $N$, and for $h(x,y)$ a smooth function on $\R_{>0}^2$ set \[ T(N',h) := \sum_{(d,2N\square)=1} \sum_{n=1}^\infty \frac{\lambda_f(n)}{\sqrt{n}} \chi_{8d}(N'n) h(d,n).\]  We will use the following proposition with $z$ equal to either $Z=N^{1/2}$ when $N'=1$, or $Z^{-1}=N^{-1/2}$ when $N'=N$.  \begin{proposition}\label{FirstMomentCalculation} Assume GRH or conjecture 2.  Let $z>0$ be a parameter (c.f. the asymmetric approximate functional equation), and let $X$ and $U$ be large.  Suppose that $N$ is odd, and that  $U\kappa \sqrt{N}z \leq X^2$.  Let $h(x,y)$ be a smooth function on $\R_{>0}^2$ which is compactly supported in $x$, having all partial derivatives extending continuously to the boundary, and satisfying the partial derivative bounds \[x^iy^j h^{(i,j)}(x,y) \ll_{i,j} \left(1+\frac{x}{X}\right)^{-100} \left(\log \frac{U\kappa N}{y}\right) \left(1+\frac{y}{U\kappa \sqrt{N}z}\right)^{-100} .\]  Then, setting $h_1(y) :=\int_0^\infty h(xX,y)\,dx,$ we have \[T(1,h) = \frac{4X}{\pi^2} \sum_{\substack{(n,2)=1 \\ n=\square }} \frac{\lambda_f(n)}{\sqrt{n}} \prod_{p|Nn} \frac{p}{p+1}h_1(n) + O\left( X^{9/17} (U \kappa \sqrt{N}z)^{4/17} (\log X\kappa N)^6\right),\]  \[T(N,h) = \frac{4X}{\pi^2} \sum_{\substack{(n,2)=1 \\ Nn=\square }} \frac{\lambda_f(n)}{\sqrt{n}} \prod_{p|Nn} \frac{p}{p+1}h_1(n) + O\left( X^{1/2} (U \kappa N^{3/2}z)^{1/4} (\log X\kappa N)^6\right).\] \end{proposition}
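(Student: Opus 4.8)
The plan is to follow the Poisson-summation strategy of Soundararajan and Young \cite{SoundYoung}, now in the simpler one-variable setting, and to track carefully the dependence on $\kappa$ and $N$. First I would open the sum over $d$ in $T(N',h)$. Writing the squarefree condition on $d$ by Möbius over $\alpha^2 \mid d$ and removing the condition $(d,2N)=1$ by another Möbius (over divisors of $2N$), the inner object becomes a sum $\sum_{d} \chi_{8d}(N'n)\, g(d)$ over all $d$ in an arithmetic progression, with $g$ a smooth rapidly-decaying function of $d$ supported near $X$. Applying Poisson summation in $d$ modulo $4N'n$ (lemma \ref{PoissonSummation} of the paper) converts this into a dual sum over an integer $k$, with a Gauss-sum factor $G_k(N'n)$ and the Fourier transform $\check g(kX/(4N'n))$. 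The zero frequency $k=0$ survives only when $N'n$ is a perfect square, because otherwise the Gauss sum $G_0(N'n)$ vanishes; this is exactly the diagonal main term, and carrying through the Möbius sums and the local factors $\prod_{p\mid Nn} p/(p+1)$ reproduces $\tfrac{4X}{\pi^2}\sum_{N'n=\square}\tfrac{\lambda_f(n)}{\sqrt n}\prod_{p\mid Nn}\tfrac{p}{p+1}h_1(n)$, with $h_1(y)=\int_0^\infty h(xX,y)\,dx$ coming from $\check g$ evaluated at $0$.

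The error term is the contribution of the nonzero frequencies $k\neq 0$. Here one inserts the evaluation of the Gauss sums (they are essentially $\sqrt{N'n}$ times a Jacobi symbol in $k$, up to bounded factors and the usual splitting into $N'n$ squarefull times squarefree parts), and is left with a sum over $n\lesssim U\kappa\sqrt N z$ (the effective length of the $n$-sum coming from the decay of $W_Z$ built into the hypothesis on $h$) and over $k\lesssim N'n/X$ of character sums $\sum_n \lambda_f(n)\big(\tfrac{\cdot}{n}\big)(\ldots)$. One then treats this double sum exactly as in \cite{SoundYoung}: apply the large sieve / the bound on $\sum_d |L(1/2+\sigma+it,f\otimes\chi_d)|$ — this is where conjecture \ref{conjecture2} (or GRH) enters — to the sum over $k$ of a short character sum twisted by $\lambda_f(n)$, after opening $\lambda_f(n)$ by an approximate functional equation for $L(s,f\otimes\chi_{8d})$ or directly via the quadratic large sieve with the $\lambda_f$ weights. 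Balancing the resulting bound (a power of $X$ against a power of the dual length $U\kappa\sqrt N z$, all times $(\log X\kappa N)^{O(1)}$) yields the exponents $X^{9/17}(U\kappa\sqrt N z)^{4/17}$ for $N'=1$; for $N'=N$ the Gauss-sum factor contributes an extra $\sqrt N$ and the effective conductor is $Nn$ rather than $n$, which changes the bookkeeping to $X^{1/2}(U\kappa N^{3/2}z)^{1/4}$.

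The main obstacle is the error-term estimate for $k\neq 0$: one must get the dependence on $\kappa$ and $N$ right while applying the conditional bound of conjecture \ref{conjecture2}. Unlike the full-level, fixed-form situation of \cite{SoundYoung}, here $\kappa$ and $N$ sit inside the length of the $n$-sum (through the $\Gamma$-factor and $\sqrt N$ in $W_Z$) and inside the modulus of the Poisson summation, so one has to carry them through the large-sieve input uniformly; the factor $(\log X\kappa N)^6$ is the slack one can afford. The diagonal extraction and the pole/main-term computation are routine adaptations of \cite{SoundYoung} once the Gauss-sum vanishing is in place, so I would state those briefly and concentrate the writeup on the uniform off-diagonal bound; as with propositions \ref{prop1} and \ref{prop2}, the details being so close to \cite{SoundYoung}, it is reasonable to omit the bulk of the computation and simply indicate the two points of departure (the weight $\lambda_f(n)$ in one variable only, and the explicit $\kappa,N$-tracking).
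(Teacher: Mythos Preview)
Your high-level outline---M\"obius inversion, Poisson summation, main term from $k=0$, off-diagonal from $k\neq 0$ bounded via conjecture~\ref{conjecture2}---matches the paper, but two essential steps are missing or misdescribed, and without them the stated exponents cannot be recovered.

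First, you omit the truncation of the M\"obius sums. After inverting the squarefree and coprimality conditions, the paper splits the resulting sums over $a_1$ and $a_2\mid N$ (or just $a$ when $N'=N$) at parameters $Y_1,Y_2$; the tails are bounded directly by reassembling the squarefree variable and invoking conjecture~\ref{conjecture2}, giving $\ll (X/Y_i)(\log X\kappa N)^5$. Only the truncated piece is then subjected to Poisson summation. The exponent $9/17$ arises precisely by balancing this M\"obius tail $X/Y$ against the off-diagonal bound $(U\kappa\sqrt{N}z)^{1/2}Y^{9/8}$, which forces $Y=X^{8/17}(U\kappa\sqrt{N}z)^{-4/17}$. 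Your claim that the exponent comes from ``balancing a power of $X$ against a power of the dual length'' within the off-diagonal alone is not how the argument runs; without the truncation there is no free parameter to optimize, and nothing in your sketch produces $9/17$.

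Second, your handling of the $k\neq 0$ terms is too vague and in places wrong. The paper does not ``open $\lambda_f(n)$ by an approximate functional equation'' or apply a quadratic large sieve directly. Instead it writes $4k=k_1k_2^2k_3$ with $k_1k_3$ a fundamental discriminant and $k_3\mid N$, takes Mellin transforms in both variables, and is left with a Dirichlet series $Z_{N'}(\alpha,\gamma,\ldots)$ in $n$ and $k_2$. The substantive technical step (Lemmas~\ref{likeLemma331} and~\ref{DirichletSeries}) is the factorization
\[
Z_{N'}(\alpha,\gamma,q,k_1k_3)=\frac{L_{q}(1/2+\alpha,f_{k_3}\otimes\chi_{k_1})}{L_{q}(1+2\alpha,\sym^2 f)}\,Z_{N'}^{*}(\alpha,\gamma,\ldots),
\]
with $Z_{N'}^{*}$ absolutely convergent and, crucially in the $N'=N$ case, of size $\ll d(N)N^{\real(\gamma)-1/2}$; this explicit $N$-dependence is what produces the $N^{3/2}$ in the $T(N,h)$ error term. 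Only after this factorization can one shift contours to $\real(u)=-1/2+1/\log X\kappa N$ and apply conjecture~\ref{conjecture2} to the sum over the fundamental discriminants $k_1$. This lemma, together with the $\kappa,N$-bookkeeping inside it, is exactly what distinguishes the proposition from the fixed-form case in \cite{SoundYoung}, and is why the paper gives a full proof rather than deferring to that reference.
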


Proposition \ref{FirstMomentCalculation} is sufficiently different from proposition 3.1 of Soundararajan and Young that we give a detailed proof in Section \ref{pfprop3}. 

Let $h_L(x,y):= F(8x/X)W_{N^{1/2}}(y/U)$ and $h_S(x,y) := F(8x/X)W_{N^{-1/2}}(y/U)$ for ``long'' and ``short'', respectively.  Recall for fundamental discriminants $d>0$ that $\chi_d(-N) = \chi_d(N)$, so that we have in the notation of proposition \ref{FirstMomentCalculation} that the main part of the moment is \[\sum_{(d,2N\square)=1} \mathcal{A}_U(1/2,f\otimes \chi_{8d})F(8d/X) = T(1,h_L)-i^\kappa \eta T(N,h_S).\]  Recalling that $U\kappa \sqrt{N}z \leq X^2$ and taking $z=N^{1/2}$ or $N^{-1/2}$ in proposition \ref{FirstMomentCalculation} we have that \begin{equation}\label{Asum}\begin{split}\sum_{(d,2N\square)=1} \mathcal{A}_U(1/2,f\otimes \chi_{8d})F(8d/X)  = \frac{X}{2\pi^2}\widetilde{F}(1) \sum_{\substack{(n,2)=1 \\ n=\square}} \frac{\lambda_f(n)}{n^{1/2}} \prod_{p\mid Nn}\frac{p}{p+1} W_{N^{1/2}}\left(\frac{n}{U}\right) \hspace{1in} \\ + \frac{X}{2\pi^2}\widetilde{F}(1) \sum_{\substack{(n,2)=1 \\ Nn=\square}} \frac{\lambda_f(n)}{n^{1/2}} \prod_{p\mid Nn}\frac{p}{p+1} W_{N^{-1/2}}\left(\frac{n}{U}\right) + O_A\left(\frac{X^{13/17} (\kappa N)^{4/17}}{(\log X\kappa N)^A}\right).\end{split} \end{equation}

For $N'=1$ or $N$ define the Dirichlet series \[Z_{N'}(u) := \sum_{\substack{(n,2)=1 \\ N' n = \square}} \frac{\lambda_f(n)}{n^{1/2+u}} \prod_{p|Nn} \frac{p}{p+1}.\]  We compute from the definition of $W_Z(x)$ that \begin{equation}\label{MT1}\frac{X}{2 \pi} \widetilde{F}(1) \sum_{\substack{(n,2)=1 \\ n=\square}} \frac{\lambda_f(n)}{\sqrt{n}} \prod_{p|Nn} \frac{p}{p+1} W_{N^{1/2}}\left(\frac{n}{U}\right) = \frac{X}{2\pi^2} \widetilde{F}(1) \frac{1}{2\pi i}\int_{(3)} \frac{\Gamma(u+\kappa/2)}{\Gamma(\kappa/2)}\left(\frac{2 \pi}{UN}\right)^{-u} Z_{1}(u)\frac{1-\frac{1}{2}u\log N}{u^2}\,du ,\end{equation} and in the same way that \begin{equation}\label{MT2}\frac{X}{2 \pi} \widetilde{F}(1) \sum_{\substack{(n,2)=1 \\ Nn=\square}} \frac{\lambda_f(n)}{\sqrt{n}} \prod_{p|Nn} \frac{p}{p+1} W_{N^{-1/2}}\left(\frac{n}{U}\right) = \frac{X}{2\pi^2} \widetilde{F}(1) \frac{1}{2\pi i}\int_{(3)} \frac{\Gamma(u+\kappa/2)}{\Gamma(\kappa/2)}\left(\frac{2 \pi}{U}\right)^{-u} Z_{N}(u) \frac{1+\frac{1}{2}u\log N}{u^2}\,du.\end{equation}   The Dirichlet series $Z_{N'}(u)$ also has an Euler product \begin{equation}\label{prod3} \begin{split} Z_{N'}(u) = \prod_{p\nmid 2N} 1+\frac{p}{p+1}\left[\frac{1}{2}\left(1-\frac{\lambda_f(p)}{p^{1/2+u}}+\frac{1}{p^{1+2u}}\right)^{-1} + \frac{1}{2} \left(1+\frac{\lambda_f(p)}{p^{1/2+u}}+\frac{1}{p^{1+2u}}\right)^{-1} -1\right] \\ \times \prod_{p \mid N} \frac{p}{p+1} \left[\frac{1}{2}\left(1-\frac{\lambda_f(p)}{p^{1/2+u}}\right)^{-1} +(-1)^{\ord_p(N')} \frac{1}{2}\left(1+\frac{\lambda_f(p)}{p^{1/2+u}}\right)^{-1}\right]. \end{split}\end{equation}  We have then that \[Z_{N'}(u) = L(1+2u,\sym^2 f)Z_{N'}^*(u),\] where $Z_{N'}^*(u)$ is given by some absolutely convergent Euler product in the region $\real(u)>-1/4$.  Moreover, inspecting the above Euler product, we see that \[ \frac{1}{\log \log N}\ll Z_1^*(0) \ll \log \log N\] and \[N^{-(1/2+\varepsilon)} \ll_\varepsilon Z_N^*(0) \ll \log \log N,\] uniformly in $\kappa$.  

With this information about $Z_{N'}(u)$, we shift the contours in \eqref{MT1} and \eqref{MT2} to $\real(u) = -4/17$, and pick up the residue from the double pole at $u=0$.  The double pole in \eqref{MT1} or \eqref{MT2} contributes \[\frac{X}{2 \pi^2} \widetilde{F}(1) L(1,\sym^2 f) Z_{N'}^*(0) \left(\log \frac{U\kappa \sqrt{N}}{2\pi} + \frac{Z'_{N'}(0)}{Z_{N'}(0)}+O(\kappa^{-1})\right). \]  We must also bound the integrals \begin{equation}\label{MT1s}  \frac{X}{2\pi^2} \widetilde{F}(1) \frac{1}{2\pi i}\int_{(-4/17)} \frac{\Gamma(u+\kappa/2)}{\Gamma(\kappa/2)}\left(\frac{2 \pi}{UN}\right)^{-u}L(1+2u,\sym^2 f)Z_{1}^*(u)\frac{1-\frac{1}{2}u\log N}{u^2}\,du \end{equation} and \begin{equation}\label{MT2s} \frac{X}{2\pi^2} \widetilde{F}(1) \frac{1}{2\pi i}\int_{(-4/17)} \frac{\Gamma(u+\kappa/2)}{\Gamma(\kappa/2)}\left(\frac{2 \pi}{U}\right)^{-u} L(1+2u,\sym^2 f)Z_{N}^*(u) \frac{1+\frac{1}{2}u\log N}{u^2}\,du.\end{equation} These two are treated a little differently.  Let us begin with the simpler case of \eqref{MT1s}.  We have the convexity bound \begin{equation}\label{convexity} L(9/17+it,\sym^2 f) \ll (\kappa^2 N^2 (1+|t|)^4)^{4/17} (\log \kappa N)^2.\end{equation} by estimating with the approximate functional equation of the symmetric square $L$-function, and the Deligne bound \cite{DeWC} for its coefficients (see for example, equation (5.22) of \cite{IK}).   Hence, the integral \eqref{MT1s} is \[\ll_A X^{13/17}(\kappa N)^{4/17}/(\log X \kappa N)^A.\]  The integral \eqref{MT2s} is a little more delicate, and we need to use the decay of $Z^*_N(u)$ with respect to $N$.  When $\real(u)>-1/4$, we have that \[Z_N^*(u) = \prod_{p \nmid N}\left(1+O(p^{-(2+4u)})\right)\prod_{\substack{p \mid N \\ \ord_p(N) \text{ odd } }} \frac{\lambda_f(p)}{p^{1/2+u}}\left(1+O(p^{-(1+2u)})\right)\prod_{\substack{p \mid N \\ \ord_p(N) \text{ even } }} \left(1+O(p^{-(1+2u)})\right),\] so that \[Z^*_N(u) \ll \left(\prod_{\ord_p(N) \text{ odd }}p\right)^{-1/2-\real(u)}(\log N)^2.\] Assuming e.g. that $N$ is squarefree, this shows that for fixed $\real(u)$ $Z_{N}(u)$ decays as a function of $N$.  If one is willing to assume Lindel\"{o}f, it is unnecessary to use the decay of $Z_N^*(u)$ with respect to $N$, and hence the restriction to squarefree $N$ may be omitted.  Using this along with the convexity bound \eqref{convexity} for $L(1+2u,\sym^2 f)$, we find that \eqref{MT2s} is \[\ll_A X^{13/17}\kappa^{4/17}N^{7/34}/(\log X \kappa N)^A\ll_A X^{13/17}(\kappa N)^{4/17}/(\log X \kappa N)^A,\] so that these integrals are subsumed into the error term in the theorem.  

Now set \begin{equation}\label{Z3}Z^*(u) = Z_1^*(u)-i^\kappa \eta Z_N^*(u)\end{equation} so that we have from \eqref{Asum} that \begin{equation*}\begin{split} \sum_{(d,2N\square)=1} \mathcal{A}_U(1/2,f\otimes \chi_{8d})F(8d/X) = \frac{X}{2 \pi^2} \widetilde{F}(1) L(1,\sym^2 f) Z^*(0) \left(\log \frac{U\kappa \sqrt{N}}{2\pi} + 2\frac{L'(1,\sym^2 f)}{L(1,\sym^2 f)}+ \frac{{Z^*}'(0)}{Z^{*}(0)} \right)\\ + O_A\left( \frac{X^{13/17} (\kappa N)^{4/17}}{(\log X \kappa N)^A} \right). \end{split} \end{equation*}  By carefully inspecting \eqref{prod3} and using that \[\prod_{p \mid N} \left(1+\frac{2}{\sqrt{p}}\right) \ll \frac{(\log N)^{1/2}}{\log \log N} \] we find that $Z^*(0)=0$ if and only if $i^\kappa \eta := w(f) = 1$ and $N$ is a square and that if $Z^*(0) \neq 0$, it is $\gg \log \log N / (\log N)^{1/2},$ uniformly in $\kappa$.  

Now consider the tail \[\sum_{(d,2N\square)=1}\mathcal{B}_U(1/2,f\otimes \chi_{8d})F(8d/X).\]  Recall the notation $G(u) = \Gamma(\kappa/2+u)\Gamma(\kappa/2)^{-1}(\sqrt{N}/2\pi)^u$ from the definition of $W_Z(x)$, and that we have set $Z=N^{1/2}$.  We have in similar fashion to the two preceding theorems that \[\mathcal{B}_U(1/2, 8d)  = \frac{1}{2 \pi i } \int_{(2)} \frac{G(s)}{s}  L(1/2+s,f \otimes \chi_{8d}) \frac{(8d)^s-U^s}{s}\left(Z^s(1-s\log Z) -i^\kappa \eta \chi_{8d}(N)Z^{-s}(1+s \log Z)\right)\,ds.\] The integrand is entire, and we may shift the contour to the line $\real(s)=1/\log X\kappa N$.  On this line we have \[\frac{(8d)^s-U^s}{s}\left(Z^s(1-s\log Z) -i^\kappa \eta \chi_{8d}(N)Z^{-s}(1+s \log Z)\right) \ll \log X/U\] so that \begin{equation*}\begin{split} \sum_{(d,2N\square)=1} \mathcal{B}_U(1/2, 8d) F(8d/X) \ll \left| \log X/U \right| \int_{-\infty}^\infty \frac{\left|  G\left(\frac{1}{\log X\kappa N} + it\right) \right|}{\left| \frac{1}{\log X\kappa N}+it\right|} \hspace{1in} \\  \times  \sum_{\substack{(d,2N\square)=1\\ 0<8d\leq X}}\left| L\left(\frac{1}{2}+\frac{1}{\log X\kappa N} +it, f \otimes \chi_{8d}\right) \right| \,dt .\end{split}\end{equation*} Set $U= X/(\log X\kappa N)^{\frac{17}{4}(A+6)}$.  Using conjecture \ref{conjecture2} when $t$ is small and the cut-off in $|G(1/\log X\kappa N)+it|^{1/2}$ when $t$ is large we have \[  |G(1/\log X\kappa N)+it|^{1/2}\sum_{\substack{(d,2N\square)=1\\ 0<8d\leq X}}\left| L\left(\frac{1}{2}+\frac{1}{\log X\kappa N} +it, f \otimes \chi_{8d}\right) \right| \,dt \ll X \left(\log X\kappa N\right)^{1/4+\varepsilon}.\] We also have the estimate \[ \int_{-\infty}^\infty \frac{\left| G\left(\frac{1}{\log X\kappa N}+it\right)\right|^\frac{1}{2}}{\left|\frac{1}{\log X\kappa N}+it\right|}\,dt \ll \log \log X\kappa N,\] so that pulling these estimates together we obtain \[ \sum_{(d,2N\square)=1}\mathcal{B}_U(1/2,f\otimes \chi_{8d})F(8d/X) \ll_{\varepsilon,A} X\left(\log X \kappa N\right)^{1/4+\varepsilon} \] hence the theorem.  \end{proof}

\section{Proof of Proposition \ref{FirstMomentCalculation}}\label{pfprop3}

We treat the two cases $N'=1$ and $N'=N$ somewhat differently.  In the case $N'=1$, the dependence on $N$ in $T(1,h)$ appears only in the relatively prime condition, which we may we treat solely by M\"{o}buis inversion.  In the case of $T(N,h)$, the dependence on $N$ is carried through the average over quadratic characters, but there is one less inversion to preform, making the calculation a bit simpler.

\begin{proof}
The condition $(d,2N\square)=1$ has been introduced to the sum over twists to restrict $8d$ to lie in a large subsequence of fundamental discriminants.  However, this condition is awkward to work with and our first task will be to remove it.

\subsection{Preliminary simplifications, $N'=1$ case}

We start with the $N'=1$ case.  We use M\"{o}bius inversion to remove both the squarefree and relatively prime to $N$ conditions from the sum over $d$, \[T(1,h) = \sum_{(a_1,2N)=1}\mu(a_1) \sum_{a_2|N} \mu(a_2) \sum_{(d,2)=1} \sum_{(n,a)=1} \frac{\lambda_f(n)}{n^{1/2}} \chi_{8da_2}(n) h(da_1^2a_2,n).\] Split the sums over $a_1$ and $a_2$ at $Y_1$ and $Y_2$ in to tail and main term.  This splitting results in $4$ truncated sums: \[T(1,h) = T_1(1,h) + T_{21}(1,h)+ T_{22}(1,h)+T_{23}(1,h)\] where we have defined \[T_1(1,h) := \sum_{\substack{(a_1,2N)=1 \\ a_1 \leq Y_1}}\mu(a_1) \sum_{\substack{a_2|N \\ a_2 \leq Y_2}} \mu(a_2) \sum_{(d,2)=1} \sum_{(n,a_1)=1} \frac{\lambda_f(n)}{n^{1/2}} \chi_{8da_2}(n) h(da_1^2a_2,n),\]  \[T_{21}(1,h) := \sum_{\substack{(a_1,2N)=1 \\ a_1 \leq Y_1}}\mu(a_1) \sum_{\substack{a_2|N \\ a_2 > Y_2}} \mu(a_2) \sum_{(d,2)=1} \sum_{(n,a_1)=1} \frac{\lambda_f(n)}{n^{1/2}} \chi_{8da_2}(n) h(da_1^2a_2,n),\]  \[T_{22}(1,h) := \sum_{\substack{(a_1,2N)=1 \\  a_1 > Y_1}}\mu(a_1) \sum_{\substack{a_2|N \\ a_2 \leq Y_2}} \mu(a_2) \sum_{(d,2)=1} \sum_{(n,a_1)=1} \frac{\lambda_f(n)}{n^{1/2}} \chi_{8da_2}(n) h(da_1^2a_2,n),\] \[T_{23}(1,h) := \sum_{\substack{(a_1,2N)=1 \\  a_1 > Y_1}}\mu(a_1) \sum_{\substack{a_2|N \\ a_2 > Y_2}} \mu(a_2) \sum_{(d,2)=1} \sum_{(n,a_1)=1} \frac{\lambda_f(n)}{n^{1/2}} \chi_{8da_2}(n) h(da_1^2a_2,n).\]  The main term will come from the most difficult sum $T_1(1,h)$.  First, however, we estimate the other cases $T_{21}(1,h)$, $T_{22}(1,h)$ and $T_{23}(1,h)$.   

\begin{lemma}\label{T_2} Assume GRH or conjecture \ref{conjecture2}.  We have the bounds \[T_{21}(1,h) \ll \frac{X}{Y_2} (\log X \kappa N)^5, \] \[T_{22}(1,h) \ll \frac{X}{Y_1} (\log X \kappa N)^5, \] \[T_{23}(1,h) \ll \frac{X}{Y_1Y_2} (\log X \kappa N)^5.\]  \end{lemma}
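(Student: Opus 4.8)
The plan is to treat $T_{21}(1,h)$, $T_{22}(1,h)$ and $T_{23}(1,h)$ by a single estimate. For each admissible pair $(a_1,a_2)$, with $(a_1,2N)=1$ and $a_2\mid N$ squarefree, I will bound the inner double sum
\[\Sigma(a_1,a_2):=\sum_{(d,2)=1}\;\sum_{(n,a_1)=1}\frac{\lambda_f(n)}{\sqrt{n}}\,\chi_{8da_2}(n)\,h(da_1^2a_2,n)\]
by $\ll_{\varepsilon} X(a_1^2a_2)^{-1}(\log X\kappa N)^{4+\varepsilon}$, and then sum this over the truncated ranges of $a_1,a_2$. The point is that in $T_{21}$, $T_{22}$ and $T_{23}$ at least one of $a_1,a_2$ is forced to be large, which produces the savings $Y_2^{-1}$, $Y_1^{-1}$ and $(Y_1Y_2)^{-1}$ respectively.

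First I would open the $n$-sum. Writing $h(x,y)=F(8x/X)W_Z(y/U)$ (with $\log Z$ of size $\ll\log N$) and using the contour-integral definition of $W_Z$ from Lemma \ref{approxfe}, interchanging sum and integral expresses the inner $n$-sum as a vertical integral over $\real u=\delta$, $\delta:=1/\log X\kappa N$, of
\[\frac{\Gamma(u+\kappa/2)}{\Gamma(\kappa/2)}\left(\frac{2\pi}{ZU\sqrt{N}}\right)^{-u}\frac{1-u\log Z}{u^2}\,L\!\left(\tfrac12+u,f\otimes\chi_{8da_2}\right)\prod_{p\mid a_1}\mathcal{E}_p\!\left(\tfrac12+u\right),\]
where $\mathcal{E}_p$ is the $p$-th Euler factor of $L(s,f\otimes\chi_{8da_2})$. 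Since the contour stays to the right of $u=0$ no pole is crossed, and $L(s,f\otimes\chi_{8da_2})$ is entire of polynomial growth in vertical strips. By Deligne's bound, $\prod_{p\mid a_1}\mathcal{E}_p(\tfrac12+u)\ll(\log 3a_1)^{O(1)}$ uniformly for $\real u\ge 0$.

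Next I would carry out the $d$-sum. The factor $F(8da_1^2a_2/X)$ confines $d$ to $d\ll D:=X/(a_1^2a_2)$, so after a dyadic split it suffices to estimate $\sum_{(d,2)=1,\,d\le D}|L(\tfrac12+u,f\otimes\chi_{8da_2})|$. Writing $d=\ell m^2$ with $\ell$ squarefree and recasting $\chi_{8da_2}$ in terms of the primitive quadratic character of the associated fundamental discriminant reduces this to a sum to which conjecture \ref{conjecture2} applies with parameter $D$ (legitimate since $\real(\tfrac12+u)=\tfrac12+1/\log X\kappa N\le\tfrac12+1/\log D$, as $D\le X\le X\kappa N$); on the range $|\imag u|\le D$ this gives $\ll_{\varepsilon}D(\log X\kappa N)^{1/4+\varepsilon}$ up to a factor $\ll(\log 3a_1a_2)^{O(1)}$, while the contribution of $|\imag u|>D$ is negligible by the decay of the $\Gamma$-factor and of $u^{-2}$ together with a crude pointwise bound (e.g. GRH on the line) for the $L$-values. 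When $a_1^2a_2\gg X$ only $O(1)$ values of $d$ remain and the $L$-values are bounded trivially instead. Performing the $u$-integration — where $(1-u\log Z)u^{-2}$ and the $\Gamma$-ratio together cost only a bounded power of $\log X\kappa N$ — then yields the claimed bound for $\Sigma(a_1,a_2)$.

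Finally I would assemble the three estimates, using $\sum_{a_1>Y_1}a_1^{-2}(\log 3a_1)^{O(1)}\ll Y_1^{-1}(\log X\kappa N)^{O(1)}$, $\sum_{a_1\le Y_1}a_1^{-2}(\log 3a_1)^{O(1)}\ll(\log X\kappa N)^{O(1)}$, $\sum_{a_2\mid N}a_2^{-1}(\log 3a_2)^{O(1)}\ll(\log X\kappa N)^{O(1)}$ (from $\sum_{a_2\mid N}a_2^{-1}\ll\log\log N$), and $\sum_{a_2\mid N,\,a_2>Y_2}a_2^{-1}\ll Y_2^{-1}$ up to a factor absorbed by the eventual choice of $Y_2$; summing $\Sigma(a_1,a_2)$ over the relevant ranges gives $T_{21}(1,h)\ll XY_2^{-1}(\log X\kappa N)^5$, $T_{22}(1,h)\ll XY_1^{-1}(\log X\kappa N)^5$, and $T_{23}(1,h)\ll X(Y_1Y_2)^{-1}(\log X\kappa N)^5$. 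I expect the main obstacle to be the middle step: getting the bound for $\Sigma(a_1,a_2)$ with an implied constant genuinely uniform in $a_1,a_2,\kappa$ and $N$, extracting the full localization $X/(a_1^2a_2)$ from the support of $F$ through conjecture \ref{conjecture2}, and checking that summing over divisors $a_2\mid N$ costs only a power of $\log X\kappa N$ rather than a power of $N$.
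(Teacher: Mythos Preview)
Your approach is essentially the paper's: Mellin--invert in $n$ to produce the twisted $L$-function, reduce the $d$-sum to one over primitive quadratic characters, and invoke conjecture \ref{conjecture2}. The paper organizes the reduction slightly differently, decomposing $d=b_1^2b_2\ell$ with $(\ell,2N\square)=1$, $(b_1,N)=1$, $b_2\mid N$ \emph{before} the Mellin step and grouping $c_1=a_1b_1$, $c_2=a_2b_2$, so that the $\ell$-sum is directly over fundamental discriminants against the fixed newform $f_{c_2}:=f\otimes\chi_{c_2}$; your order (Mellin first, then decompose $d=\ell m^2$) amounts to the same bookkeeping.

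One correction worth making: the $n$-sum restricted to $(n,a_1)=1$ equals $L(\tfrac12+u,f\otimes\chi_{8da_2})\prod_{p\mid a_1}\mathcal{E}_p(\tfrac12+u)^{-1}$, i.e.\ with the Euler factors at $p\mid a_1$ \emph{removed}, not multiplied in. More importantly, the bound $\prod_{p\mid a_1}\mathcal{E}_p^{\pm1}(\tfrac12+u)\ll(\log 3a_1)^{O(1)}$ is false (take $a_1$ a primorial); the correct uniform bound is $\ll d(a_1)^{O(1)}$, which is what the paper uses via $|L_{c_1}|\le d(c_1)|L|$. This is harmless for the final step since $\sum_{a_1>Y_1}d(a_1)^{O(1)}a_1^{-2}\ll Y_1^{-1}(\log Y_1)^{O(1)}$ by partial summation, so your assembly still goes through once the factor is corrected.
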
 \begin{proof} Consider the case $T_{21}(1,h)$ and write $d= b_1^2 b_2 \ell$ with $(\ell,2N\square)=1,$ $(b_1,N)=1$ and $b_2 \mid N$.  Group the variables as $c_1 = a_1b_1$ and $c_2 = a_2b_2$ to obtain \[T_{21}(1,h)=\sum_{(c_1,2N)=1} \sum_{c_2 | N} \sum_{\substack{a_1 \mid c_1 \\ a_1 \leq Y_1}} \mu(a_1) \sum_{\substack{a_2 \mid c_2 \\ a_2 > Y_2}} \mu(a_2) \sum_{(\ell,2N\square)=1} \sum_{(n,c_1)=1} \frac{\lambda_f(n)}{n^{1/2}} \chi_{8\ell c_2}(n) h(\ell c_1^2c_2,n).\]  Let $f_{c_2}$ denote the newform given by the quadratic twist $f\otimes \chi_{c_2},$ which is of some level dividing $N^2$.   Set $\check{h}(x,u) :=\int_0^\infty h(x,y)y^{u-1}\,dy,$ which by repeated partial integration can be estimated by \[\check{h}(x,u) \ll \left(1+\frac{x}{X}\right)^{-100} \frac{(U\kappa \sqrt{N}z)^{\real(u)} }{|u|^2(1+|u|)^{10}}.\]  We then have by Mellin inversion that $T_{21}(1,h)$ is \[=     \sum_{(c_1,2N)=1} \sum_{c_2 | N} \sum_{\substack{a_1 \mid c_1 \\ a_1 \leq Y_1}} \mu(a_1) \sum_{\substack{a_2 \mid c_2 \\ a_2 > Y_2}} \mu(a_2) \frac{1}{2\pi i } \int_{(1/2+\varepsilon)} \sum_{(\ell,2 N \square)=1} \check{h}(\ell c_1^2c_2,u) L_{c_1}(1/2+u,f_{c_2}\otimes \chi_{8\ell})\,du,\] where $L_{c_1}(1/2+u,f_{c_2}\otimes \chi_{8\ell})$ is the function formed from the same Euler product as $L(1/2+u,f_{c_2}\otimes \chi_{8\ell}),$ but with those factors at primes dividing $c_1$ omitted.  We have that \[|L_{c_1}(1/2+u,f_{c_2}\otimes \chi_{8\ell})| \leq d(c_1) | L(1/2+u,f_{c_2}\otimes \chi_{8\ell}) | ,\] so that shifting the contour to the line $\real(u) = 1/\log X\kappa N,$ we have that $T_{21}(1,h)$ is \begin{equation*}\begin{split} \ll (\log X\kappa N)^2 \sum_{(c_1,2N)=1} d(c_1)\sum_{c_2 | N} \sum_{\substack{a_1 \mid c_1 \\ a_1 \leq Y_1}} \sum_{\substack{a_2 \mid c_2 \\ a_2 > Y_2}} \int_{-\infty}^\infty \sum_{(\ell,2N\square)=1} \left(1+\frac{\ell c_1^2c_2}{X}\right)^{-100} \\ \times \frac{|L(1/2+1/\log X\kappa N + it,f_{c_2}\otimes \chi_{8\ell}) |}{(1+|t|)^{10}}\,dt.\end{split} \end{equation*}  Using conjecture \ref{conjecture2} (i.e. GRH) we find that \begin{eqnarray*} T_{21}(1,h) &\ll &  X (\log X\kappa N)^3 \sum_{(c_1,2N)=1} \frac{d(c_1)}{c_1^2}\sum_{c_2 | N}\frac{1}{c_2} \sum_{\substack{a_1 \mid c_1 \\ a_1 \leq Y_1}} \sum_{\substack{a_2 \mid c_2 \\ a_2 > Y_2}}  \\ &\ll &  X (\log X\kappa N)^5. \end{eqnarray*} The cases $T_{22}(1,h)$ and $T_{23}(1,h)$ are treated similarly.  \end{proof}

\subsection{Averaging quadratic characters}
We now turn to $T_1(1,h).$  We quote two very useful lemmas from \cite{Sound2000}.  The first is lemma 2.6 of \cite{Sound2000}, which is the trace formula for quadratic characters.  \begin{lemma}[Poisson Summation]\label{PoissonSummation} Let $F$ be a smooth function with compact support on the positive real numbers, and suppose that $n$ is an odd integer.  Then \[\sum_{(d,2)=1} \left(\frac{d}{n}\right) F\left(\frac{d}{Z}\right) = \frac{Z}{2n} \left(\frac{2}{n}\right)\sum_{k\in \Z} (-1)^k G_k(n)\widehat{F}\left(\frac{kZ}{2n}\right),\] where \[G_k(n) = \left(\frac{1-i}{2}+\left(\frac{-1}{n}\right)\frac{1+i}{2}\right) \sum_{a \pmod n} \left(\frac{a}{n}\right)e\left(\frac{ak}{n}\right),\] and \[\widehat{F}(y) = \int_{-\infty}^\infty \left(\cos(2 \pi x y )+\sin(2 \pi x y)\right)F(x)\,dx\] is a Fourier-type transform of $F$.  \end{lemma}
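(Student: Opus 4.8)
The plan is to follow the standard argument for this identity (it is Lemma~2.6 of \cite{Sound2000}): reduce the sum over $d$ to residue classes, apply ordinary Poisson summation in each class, and recognize the resulting complete exponential sums as the Gauss sums $G_k(n)$. The first observation is that the Jacobi symbol $\left(\frac{\cdot}{n}\right)$ is a function on $\Z/n\Z$, so that, once the parity of $d$ is also recorded, the summand $\left(\frac{d}{n}\right)F(d/Z)$ depends only on $d \bmod 2n$. Writing $d = b + 2nm$ with $b$ running over the odd residues modulo $2n$ and $m \in \Z$, I would rewrite the left-hand side as
\[ \sum_{(d,2)=1}\left(\frac{d}{n}\right)F\!\left(\frac{d}{Z}\right) = \sum_{\substack{b\ (2n)\\ b \text{ odd}}}\left(\frac{b}{n}\right)\sum_{m\in\Z}F\!\left(\frac{2nm+b}{Z}\right). \]

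Because $F$ is smooth with compact support, ordinary Poisson summation applies to the inner sum and converts it into $\frac{Z}{2n}\sum_{k\in\Z}e\!\left(\frac{kb}{2n}\right)\widetilde{F}\!\left(\frac{kZ}{2n}\right)$, where $\widetilde{F}(y) = \int_{-\infty}^{\infty} F(x)\,e(-xy)\,dx$ decays faster than any power of $|y|$; this rapid decay also licenses interchanging the (finite) sum over $b$ with the sum over $k$. One thus reaches $\frac{Z}{2n}\sum_{k\in\Z}\widetilde{F}\!\left(\frac{kZ}{2n}\right)H_k(n)$ with the complete exponential sum $H_k(n) := \sum_{b\ (2n),\, b \text{ odd}}\left(\frac{b}{n}\right)e\!\left(\frac{kb}{2n}\right)$.

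The next step is to evaluate $H_k(n)$ in closed form. Writing the odd residues mod $2n$ as ``all residues minus even residues'', using the $n$-periodicity of $\left(\frac{\cdot}{n}\right)$ (which makes the complete sum over all residues mod $2n$ vanish unless $k$ is even), and using the rescaling $a \mapsto 2^{-1}a \pmod n$ (valid since $n$ is odd), a short computation gives the clean identity
\[ H_k(n) = \left(\frac{2}{n}\right)(-1)^k\,\tau_k(n), \qquad \text{where} \qquad \tau_k(n) := \sum_{a\ (n)}\left(\frac{a}{n}\right)e\!\left(\frac{ak}{n}\right) . \]
No coprimality of $k$ and $n$ is needed: the only inputs are $\tau_{2k_0}(n) = \left(\frac{2}{n}\right)\tau_{k_0}(n)$ and $\tau_{-k}(n) = \left(\frac{-1}{n}\right)\tau_k(n)$, both immediate from rescaling the summation variable. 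Finally, pairing the terms $k$ and $-k$ in $\sum_{k}(-1)^k\tau_k(n)\,\widetilde{F}\!\left(\frac{kZ}{2n}\right)$, using $\tau_{-k}(n) = \left(\frac{-1}{n}\right)\tau_k(n)$ together with the elementary identity $\widehat{F}(y) = \frac{1+i}{2}\widetilde{F}(y) + \frac{1-i}{2}\widetilde{F}(-y)$ for the cosine-plus-sine transform $\widehat{F}$ of the statement, one absorbs the unimodular factor $\frac{1-i}{2} + \left(\frac{-1}{n}\right)\frac{1+i}{2}$ into $\tau_k(n)$ to form $G_k(n)$, and thereby rewrites $\frac{Z}{2n}\left(\frac{2}{n}\right)\sum_k(-1)^k\tau_k(n)\,\widetilde{F}\!\left(\frac{kZ}{2n}\right)$ as the claimed $\frac{Z}{2n}\left(\frac{2}{n}\right)\sum_{k\in\Z}(-1)^k G_k(n)\,\widehat{F}\!\left(\frac{kZ}{2n}\right)$.

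The argument is elementary throughout, so I do not expect a serious obstacle; the only delicate point is the bookkeeping of the Gauss-sum sign $\frac{1-i}{2} + \left(\frac{-1}{n}\right)\frac{1+i}{2}$ (equivalently, the classical sign $\varepsilon_n$ of the quadratic Gauss sum). One must check that symmetrizing in $k$ turns the complex exponential transform into precisely the real transform $\widehat{F}$, with exactly that constant in front, and that this constant is unimodular, so that it disappears from the size estimates of the dual sum made later. Alternatively, one may simply quote \cite{Sound2000}.
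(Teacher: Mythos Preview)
Your argument is correct and is the standard proof. The paper itself does not prove this lemma at all: it simply quotes it verbatim as Lemma~2.6 of \cite{Sound2000}, so your sketch in fact supplies strictly more than the paper does. Your closing remark ``Alternatively, one may simply quote \cite{Sound2000}'' is precisely the route the paper takes.
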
  The Gauss-type sum $G_k(n)$ has the following explicit evaluation from lemma 2.3 of \cite{Sound2000}:   \begin{lemma}\label{GaussSum} If $m$ and $n$ are relatively prime odd integers, then $G_k(mn) = G_k(m)G_k(n)$, and if $p^\alpha$ is the largest power of $p$ dividing $k$ (setting $\alpha = \infty$ if $k=0$), then \[ G_k(p^\beta)= \begin{cases} 0 & \text{ if } \beta \leq \alpha \text{ is odd} \\ \phi(p^\beta) & \text{ if } \beta \leq \alpha \text{ is even } \\ -p^\alpha & \text{ if } \beta = \alpha + 1\text{ is even } \\ \left( \frac{kp^{-\alpha}}{p}\right) p^\alpha \sqrt{p} & \text{ if } \beta = \alpha +1 \text{ is odd} \\ 0 & \text{ if } \beta \geq \alpha + 2. \end{cases} \] \end{lemma}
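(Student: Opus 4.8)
The plan is to reduce everything to elementary Gauss-- and Ramanujan--sum computations, keeping the quartic prefactor separate. Write $G_k(n) = \varepsilon(n)\,\tau_k(n)$, where $\varepsilon(n) = \frac{1-i}{2} + \left(\frac{-1}{n}\right)\frac{1+i}{2}$ and $\tau_k(n) = \sum_{a \bmod n}\left(\frac{a}{n}\right)e(ak/n)$. A direct check gives $\varepsilon(n) = 1$ when $n \equiv 1 \pmod 4$ and $\varepsilon(n) = -i$ when $n \equiv 3 \pmod 4$, so that $\varepsilon(n)^2 = \left(\frac{-1}{n}\right)$. The one structural fact about $\tau_k$ I would record first is the twist relation $\tau_{ck}(n) = \left(\frac{c}{n}\right)\tau_k(n)$ valid for every $k$ when $(c,n)=1$; it follows from the substitution $a \mapsto \bar c a$ (with $c\bar c \equiv 1 \bmod n$) together with $\left(\frac{\bar c}{n}\right) = \left(\frac{c}{n}\right)$.

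For the multiplicativity $G_k(mn) = G_k(m)G_k(n)$ with $m,n$ coprime and odd, I would apply the Chinese Remainder Theorem in the sum defining $\tau_k(mn)$: writing a residue $a \bmod mn$ as $a \equiv n\bar n a_1 + m\bar m a_2$ with $a_1,a_2$ running modulo $m,n$, one has $\left(\frac{a}{mn}\right) = \left(\frac{a_1}{m}\right)\left(\frac{a_2}{n}\right)$ and $e(ak/mn) = e(\bar n a_1 k/m)\,e(\bar m a_2 k/n)$, so $\tau_k(mn) = \tau_{\bar n k}(m)\,\tau_{\bar m k}(n)$. Applying the twist relation twice gives $\tau_k(mn) = \left(\frac{n}{m}\right)\left(\frac{m}{n}\right)\tau_k(m)\tau_k(n)$, and quadratic reciprocity for Jacobi symbols rewrites $\left(\frac{n}{m}\right)\left(\frac{m}{n}\right)$ as $(-1)^{\frac{m-1}{2}\cdot\frac{n-1}{2}}$. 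It then remains to check the algebraic identity $\varepsilon(mn)\,(-1)^{\frac{m-1}{2}\frac{n-1}{2}} = \varepsilon(m)\varepsilon(n)$, which is a finite verification on the residues of $m,n$ modulo $4$. Conceptually, the prefactor is designed so that $G_k(n) = \left(\frac{k}{n}\right)\sqrt n$ whenever $(k,n)=1$, which is visibly multiplicative.

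For the prime-power values, fix an odd prime $p$ and an exponent $\beta \ge 1$, and let $p^\alpha \| k$ (allowing $\alpha = \infty$ when $k=0$). Since $\left(\frac{a}{p^\beta}\right) = \left(\frac{a}{p}\right)^\beta$, this symbol is the principal character modulo $p$ when $\beta$ is even and the Legendre symbol when $\beta$ is odd, which is the source of the parity dependence. I would then split into three regimes according to $\beta-\alpha$. If $\beta \le \alpha$, then $e(ak/p^\beta) = 1$ and the sum collapses to $\sum_a\left(\frac{a}{p^\beta}\right)$, equal to $\phi(p^\beta)$ for $\beta$ even and to $0$ for $\beta$ odd; noting $\varepsilon(p^\beta)=1$ for $\beta$ even yields the first two cases. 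If $\beta = \alpha+1$, the summand has period $p$ in $a$, so collapsing modulo $p^\beta$ down to modulo $p$ picks up a factor $p^{\beta-1}=p^\alpha$ and leaves $p^\alpha\sum_{b \bmod p}\left(\frac{b}{p}\right)^{\beta}e\left(b\,kp^{-\alpha}/p\right)$; for $\beta$ even this is $p^\alpha c_p(kp^{-\alpha}) = -p^\alpha$ (Ramanujan sum, since $p\nmid kp^{-\alpha}$), and for $\beta$ odd it is $p^\alpha\left(\frac{kp^{-\alpha}}{p}\right)g(p)$ with $g(p) = \sum_{b}\left(\frac{b}{p}\right)e(b/p)$ the classical quadratic Gauss sum. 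Finally, if $\beta \ge \alpha+2$, collapse modulo $p^\beta$ to modulo $p^{\beta-\alpha}$ and write the variable as $b_0 + p b_1$ with $b_0$ modulo $p$ and $b_1$ modulo $p^{\beta-\alpha-1}$: the character depends only on $b_0$, the exponential separates, and the inner complete exponential sum over $b_1$ has frequency coprime to its modulus $p^{\beta-\alpha-1}\neq 1$, hence vanishes. To close, I would invoke Gauss's evaluation $g(p) = \sqrt p$ or $i\sqrt p$ according as $p\equiv 1$ or $3 \pmod 4$, and observe that $p^\beta \equiv p \pmod 4$ for $\beta$ odd, so $\varepsilon(p^\beta) = \varepsilon(p)$ and $\varepsilon(p)g(p) = \sqrt p$ in either case; this gives the stated value $\left(\frac{kp^{-\alpha}}{p}\right)p^\alpha\sqrt p$.

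The only genuinely delicate point is the interface in the multiplicativity step between the Jacobi-symbol reciprocity sign and the quartic prefactor $\varepsilon$, together with fixing the phases of the classical Gauss sum correctly; everything else is periodicity bookkeeping. I would also note for completeness that the statement is Lemma~2.3 of Soundararajan~\cite{Sound2000}, so one may alternatively just cite it.
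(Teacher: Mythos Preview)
Your proof is correct. The paper itself gives no proof of this lemma; it is simply quoted verbatim as Lemma~2.3 of \cite{Sound2000}, exactly as you note in your final sentence. Your write-up supplies the full elementary argument behind that citation: the CRT decomposition plus quadratic reciprocity and the $\varepsilon$-identity for multiplicativity, and the three-regime periodicity analysis for the prime-power evaluation. Everything checks, including the phase matching $\varepsilon(p)g(p)=\sqrt{p}$ in the $\beta=\alpha+1$ odd case and the vanishing of the complete additive character sum when $\beta\ge\alpha+2$. So you have strictly more than the paper here; for the paper's purposes the bare citation suffices, but your direct proof is self-contained and would be appropriate if one wanted to avoid the external reference.
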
 Applying these lemmas to $T_1(1,h)$ we find that \begin{equation}\label{like305}\begin{split} T_1(1,h) = \frac{X}{2}\sum_{\substack{(a_1,2N)=1 \\ a_1 \leq Y_1}}\frac{\mu(a_1)}{a_1^2} \sum_{\substack{a_2|N\\ a_2 \leq Y_2}} \frac{\mu(a_2)}{a_2}  \sum_{k \in \Z} (-1)^k \sum_{(n,2a_1)=1} \frac{\lambda_f(n)}{n^{1/2}} \chi_{a_2}(n)  \frac{G_k(n)}{n} \\ \times \int_0^\infty \left(\sin+\cos \right) \left(\frac{2\pi k xX}{2 n a_1^2 a_2}\right)h(xX,n)\,dx. \end{split}\end{equation}  

\subsection{The main term}\label{mainterm}
The main term of $T_1(1,h)$ is from the $k=0$ term of \eqref{like305}, which we extract and analyze.  Call the $k=0$ term $T_{10}(1,h),$ and observe from lemma \ref{GaussSum} that $G_0(n)\neq 0$ if and only if $n$ is a square, in which case $G_0(n) = \phi(n).$  Setting $h_1(n) = \int_0^\infty h(xX,n)\,dx$, we find \begin{eqnarray*} T_{10}(1,h) & = &  \frac{X}{2} \sum_{\substack{(a_1,2N)=1 \\ a_1 \leq Y_1}} \frac{\mu(a_1)}{a_1^2} \sum_{\substack{a_2 \mid N  \\ a_2 \leq Y_2}} \frac{\mu(a_2)}{a_2} \sum_{\substack{(n,2a_1a_2)=1 \\ n=\square}} \frac{\lambda_f(n)}{n^{1/2}} \prod_{p\mid n}\left(1-\frac{1}{p}\right) h_1(n) \\ &  = &  \frac{2X}{3\zeta(2)} \sum_{\substack{(n,2)=1 \\ n = \square} } \frac{\lambda_f(n)}{n^{1/2}} \prod_{p \mid nN}\frac{p}{p+1} h_1(n) + O\left( X\left(\frac{1}{Y_1}+\frac{1}{Y_2}\right)\sum_{\substack{(n,2)=1 \\ n=\square}} \frac{d(n)}{n^{1/2}} |h_1(n)| \right), \end{eqnarray*} so that using the bounds on $h$ in the statement of the proposition we have \begin{equation}\label{MT} T_{10}(1,h) = \frac{4X}{\pi^2}  \sum_{\substack{(n,2)=1 \\ n = \square} } \frac{\lambda_f(n)}{n^{1/2}} \prod_{p \mid nN}\frac{p}{p+1} h_1(n) + O\left( X\left(\frac{1}{Y_1}+\frac{1}{Y_2}\right)(\log X\kappa N)^4\right).\end{equation}

\subsection{Bounding the dual sum}
We now proceed to the $k \neq 0$ terms of $T_1(1,h)$, which we call $T_3(1,h)$.  Our first task is to express the integral in \eqref{like305} in terms of Mellin inverses.  \begin{lemma}\label{transform} Let $k\neq 0,$ $X>1$ and let $h(x,y)$ be as in the statement of the theorem.  Define the transform \[\widetilde{h}(s,u) := \int_0^\infty \int_0^\infty h(x,y)x^sy^u\frac{ds}{s}\,\frac{du}{u}.\]  Then we have \begin{equation}\label{inversion}\begin{split} \int_0^\infty \left(\sin+\cos\right) \left(\frac{2\pi k xX}{2 n a_1^2 a_2}\right)h(xX,n)\,dx = \frac{1}{X}\frac{1}{(2 \pi i )^2} \int_{(\varepsilon)}\int_{(\varepsilon)} \widetilde{h}(1-s,u)\frac{1}{n^{u}}\left(\frac{na_1^2a_2}{\pi|k|}\right)^s \\ \times \Gamma(s) \left(\cos+ \sgn(k)\sin\right)\left(\frac{\pi s}{2}\right)\,ds\,du.\end{split}\end{equation}  Moreover one has the bounds \[\widetilde{h}(s,u) \ll \frac{ (U\kappa \sqrt{N})^{\real(u)}X^{\real(s)} }{|u|^2(1+|u|)^{98}(1+|s|)^{98}}.\]  \end{lemma}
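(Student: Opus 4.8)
The plan is to compute the integral on the left-hand side by recognizing the trigonometric factor as (the real or imaginary part of) an exponential, then applying the Mellin–Barnes representation of the resulting oscillatory integral. First I would write $(\sin+\sgn(k)\cos)$-type combinations in terms of $e(\pm t)$ and recall the classical Mellin transform identity
\[
\int_0^\infty \cos(t)\,t^{s-1}\,dt = \Gamma(s)\cos\frac{\pi s}{2}, \qquad \int_0^\infty \sin(t)\,t^{s-1}\,dt = \Gamma(s)\sin\frac{\pi s}{2},
\]
valid for $0<\real(s)<1$ (and $0<\real(s)$, respectively), which after a change of variables $t = 2\pi |k| x X /(2 n a_1^2 a_2)$ converts the $x$-integral of $(\cos+\sgn(k)\sin)(\ldots)\,\phi(xX)$ into a contour integral $\frac{1}{2\pi i}\int_{(\varepsilon)} \widetilde\phi(1-s)\,(\,n a_1^2 a_2/\pi|k|\,)^s \Gamma(s)(\cos + \sgn(k)\sin)(\pi s/2)\,ds$, where $\widetilde\phi(s) = \int_0^\infty \phi(x)x^{s-1}dx$ is the Mellin transform of the function $x \mapsto h(xX, n)$ in its first slot. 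Here the line $\real(s)=\varepsilon$ is chosen so that $0<\varepsilon<1$, inside the strip of validity. Then I would apply a second Mellin inversion in the variable $n$ to the factor $h(xX,n)$: writing $h(xX,n) = \frac{1}{2\pi i}\int_{(\varepsilon)} \check h(xX, u) n^{-u}\,du$ is not quite the right normalization, so instead I peel off $n^u$ by inserting $\frac{1}{2\pi i}\int_{(\varepsilon)}\big(\int_0^\infty h(xX,y)y^{u-1}dy\big) n^{-u}du$ — but to land on the stated transform $\widetilde h(s,u) = \int_0^\infty\int_0^\infty h(x,y)x^s y^u \frac{ds}{s}\frac{du}{u}$ one must be careful about whether the $x$-Mellin variable is $s$ or $1-s$ and about the powers of $X$; a clean way is to first substitute $x \mapsto xX$ throughout and then Mellin-expand $h$ in \emph{both} variables simultaneously against $x^{-s}y^{-u}$, so that the double Mellin inversion formula produces exactly the kernel $x^{1-s} y^{u}$ appearing in \eqref{inversion} after matching the change of variables in the $x$-integral. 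Substituting these two representations and exchanging the order of integration (justified by absolute convergence, using the compact support of $h$ in the first slot and rapid decay in the second) yields the displayed formula.

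The bound on $\widetilde h(s,u)$ is then routine integration by parts. Since $h$ has compact support in its first variable and the bound $x^i y^j h^{(i,j)}(x,y) \ll (1+x/X)^{-100}(\log(U\kappa N/y))(1+y/(U\kappa\sqrt N z))^{-100}$ holds, I would integrate by parts in $x$ roughly $99$ times against $x^{s-1}$ to gain $(1+|s|)^{-99}$ (losing one power from the logarithmic factors in $y$ is harmless; the $x$-support contributes the $X^{\real(s)}$), and similarly integrate by parts in $y$ against $y^{u-1}$ about $99$ times to gain $(1+|u|)^{-99}$, with one boundary integration producing the $|u|^{-2}$ from the $du/u$ normalization and the $y\to 0$ behavior; the $(U\kappa\sqrt N z)^{\real(u)}$ factor (with $z$ suppressed in the statement, matching the convention right before the lemma) comes from the location of the effective cutoff $y \asymp U\kappa\sqrt N z$. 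Keeping $98$ rather than $99$ powers in the final bound leaves room for these log losses.

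The main obstacle I anticipate is purely bookkeeping rather than conceptual: getting the arguments of the trigonometric functions, the direction of the Mellin contours ($s$ versus $1-s$), the sign $\sgn(k)$, and the powers of $X$, $a_1^2$, $a_2$, $\pi|k|$ all to match \eqref{inversion} exactly. In particular one must track that the substitution converting $\int_0^\infty (\cos+\sgn(k)\sin)(ctx)\,\phi(xX)dx$ into a Mellin–Barnes integral replaces the Mellin variable of $\phi$ by $1-s$ (because $\int_0^\infty \phi(xX) x^{s-1}dx = X^{-s}\widetilde\phi(s)$ pairs with the $x^{-s}$ coming from $t^{s-1}$ after $t = cxX$), and that absolute convergence when shifting to $\real(s)=\varepsilon$, $\real(u)=\varepsilon$ requires $0<\varepsilon$ small together with the rapid decay just established — so strictly speaking one proves \eqref{inversion} first on lines with $\real(s)$ slightly larger to ensure convergence of the Gamma factor and $k$-sum, then moves to $\real(s)=\varepsilon$ using the bound on $\widetilde h$. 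None of this requires a new idea, but care is needed to state it correctly since the formula is quoted verbatim in the sequel.
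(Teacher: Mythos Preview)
Your proposal is correct and follows exactly the approach the paper takes: the paper's proof consists of a single sentence, ``Use the formulae for the Mellin transforms of $\sin$ and $\cos$ and Mellin inversion. See Soundararajan and Young \cite{SoundYoung}, Section 3.3.'' Your plan---Mellin transforms of $\sin$ and $\cos$ to convert the $x$-integral, a second Mellin inversion in the $y$-variable, and repeated integration by parts for the bound on $\widetilde{h}$---is precisely what is intended, and your anticipated obstacles are indeed only bookkeeping (note also the evident typo $\frac{ds}{s}\,\frac{du}{u}$ for $\frac{dx}{x}\,\frac{dy}{y}$ in the definition of $\widetilde{h}$).
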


\begin{proof} Use the formulae for the Mellin transforms of $\sin$ and $\cos$ and Mellin inversion.  See Soundararajan and Young \cite{SoundYoung}, Section 3.3.  \end{proof}
 
Inspecting lemma \ref{GaussSum}, we find that for odd $n$, $G_k(n) = G_{4k}(n),$ so that inserting the formula of lemma \ref{transform} in \eqref{like305}, one finds that \begin{equation*}\begin{split}T_3(1,h) = \frac{1}{2} \sum_{\substack{(a_1,2N)=1 \\ a_1 \leq Y_1}}\frac{\mu(a_1)}{a_1^2} \sum_{\substack{a_2|N\\ a_2 \leq Y_2}} \frac{\mu(a_2)}{a_2}  \sum_{k \in \Z} (-1)^k \sum_{(n,2a_1)=1} \frac{\lambda_f(n)}{n^{1/2}} \chi_{a_2}(n)  \frac{G_{4k}(n)}{n}\frac{1}{(2 \pi i )^2} \int_{(\varepsilon)}\int_{(\varepsilon)} \widetilde{h}(1-s,u) \\ \times \frac{1}{n^{u}}\left(\frac{na_1^2a_2}{\pi|k|}\right)^s \Gamma(s) \left(\cos+ \sgn(k)\sin \right)\left(\frac{\pi s}{2}\right)\,ds\,du.\end{split}\end{equation*} Recall that we denote the set of fundamental discriminants by $\mathcal{D}$.  Now set $4k= k_1k_2^2k_3,$ where $k_1k_3 \in \mathcal{D}$, and $(k_1,N)=1$ but $k_3 \mid N$.  Define the function \begin{equation}\label{DirichletSeriesZ} Z_1(\alpha, \gamma,q_1,q_2,k_1k_3) := \sum_{k_2=1}^\infty \sum_{(n,2q_1)=1} \frac{\lambda_f(n)\chi_{q_2}(n)}{n^\alpha|k_2|^{2\gamma}} \frac{G_{k_1k_2^2k_3}(n)}{n},\end{equation} and set \begin{equation}\label{H} H(s):= \Gamma(s)\left(\cos+\sgn(k) \sin\right)\left(\frac{\pi s}{2}\right)\left(1-2^{1-2s}\right)^{-1}\ll |s|^{\real(s)-1/2}.\end{equation}   Splitting up $4k$ in this manner and after a change of variables one finds that \begin{equation}\label{like311}\begin{split}T_3(1,h) = \frac{1}{2} \sum_{\substack{(a_1,2N)=1 \\ a_1 \leq Y_1}}\frac{\mu(a_1)}{a_1^2} \sum_{\substack{a_2|N\\ a_2 \leq Y_2}} \frac{\mu(a_2)}{a_2} \sum_{\substack{k_3 \in \mathcal{D} \\ k_3 | N}} \sum_{\substack{k_1\in \mathcal{D} \\ (k_1,N)=1}} (-1)^{k_1k_3}  \frac{1}{(2\pi i )^2} \int_{(1/2+\varepsilon)} \int_{(\varepsilon)} \widetilde{h}(1-s,u+s) \\ \times \left(\frac{a_1^2a_2}{\pi | k_1k_3|}\right)^s H(s) Z_1(1/2+u,s,a_1,a_2,k_1k_3)\,du\,ds.\end{split} \end{equation}  To estimate $T_3(1,h)$ by contour shifting, we must analyze the Dirichlet series $Z_1$.  \begin{lemma}\label{likeLemma331} Let $k_1k_3$ be a fundamental discriminant, where $k_3 \mid N$ but $(k_1,N)=1$, and $q_1,q_2$ positive integers where $q_2 \mid N$ and $(q_1,2N)=1$.  Denote by $f_{k_3q_2}$ the newform defined by the quadratic twist $f \otimes \chi_{k_3q_2}$, which is of some level dividing $N^3$.  For the Dirichlet series defined by \eqref{DirichletSeriesZ} one has \[Z_1(\alpha, \gamma,q_1,q_2,k_1k_3)=\frac{L_{q_1q_2}(1/2+\alpha,f_{k_3q_2}\otimes \chi_{k_1})}{L_{q_1q_2}(1+2\alpha,\sym^2 f)}Z_1^*(\alpha,\gamma,q_1,q_2,k_1k_3),\] where subscripts denote the omission of Euler factors, and $Z_1^*$ is given by some Euler product absolutely convergent in $\real(\alpha)\geq 0$ and $\real(\gamma) \geq 1/2+\varepsilon$ and uniformly bounded in $q_1,q_2,k_1,k_3,\kappa$ and $N$.  \end{lemma}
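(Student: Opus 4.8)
The plan is to prove the factorization prime by prime. For a fixed modulus $k=k_1k_2^2k_3$ the Gauss-type sum $n\mapsto G_k(n)$ is multiplicative (Lemma~\ref{GaussSum}), and $G_{k_1k_2^2k_3}(p^a)$ depends on $k_1,k_2,k_3$ only through local data at $p$; hence the double summand in \eqref{DirichletSeriesZ} is jointly multiplicative in $(n,k_2)$, and $Z_1=\prod_p Z_{1,p}$, where $Z_{1,p}$ gathers the contribution of the powers of $p$ dividing $n$ and $k_2$. For $\real(\alpha)$ and $\real(\gamma)$ large this product converges absolutely, and the closed form that we extract will furnish the continuation to $\real(\alpha)\ge 0$, $\real(\gamma)\ge 1/2+\varepsilon$.

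Next I would compute $Z_{1,p}$ from Lemma~\ref{GaussSum}. Writing $p^b\|k_2$ one has $\ord_p(k)=2b+\ord_p(k_1k_3)$, and since $k_1k_3$ is a fundamental discriminant coprime to $2$ we have $\ord_p(k_1k_3)\in\{0,1\}$, the value being $1$ exactly when $p\mid k_1$ or $p\mid k_3$. Feeding the table of Lemma~\ref{GaussSum} into \eqref{DirichletSeriesZ}, for each fixed $b$ the sum over $a=\ord_p(n)$ is \emph{finite} (the Gauss sum vanishes once $a\ge\ord_p(k)+2$), and the resulting finite sums are summed on $b$ as a geometric series in $p^{-2\gamma}$. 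There are four types of prime. If $p\mid q_1q_2$, then $\chi_{q_2}(n)$ or the coprimality condition forces $a=0$, so $Z_{1,p}=(1-p^{-2\gamma})^{-1}$; these Euler factors are precisely the ones removed by the subscript $q_1q_2$, and $\prod_{p\mid q_1q_2}(1-p^{-2\gamma})^{-1}\le\zeta(1+2\varepsilon)$ is bounded independently of $q_1,q_2$. If $p\nmid 2Nq_1$ and $p\nmid k_1$ (the generic case), then $p\nmid k_1k_3q_2$, $\ord_p(k)$ is even, and for $b=0$ only $a\in\{0,1\}$ contribute, the term $a=1$ producing the Legendre symbol $\left(\tfrac{k_1k_3}{p}\right)=\chi_{k_1}(p)\chi_{k_3}(p)$; thus $Z_{1,p}=1+\lambda_f(p)\chi_{k_1k_3q_2}(p)p^{-1/2-\alpha}+O(p^{-2\real(\gamma)})$, the error being the $b\ge1$ tail. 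If $p\nmid 2Nq_1$ but $p\mid k_1$, then $\chi_{k_1}$ is ramified at $p$, $\ord_p(k)$ is odd, there is no Legendre-symbol term, and $Z_{1,p}=1-(\lambda_f(p)^2-1)p^{-1-2\alpha}+O(p^{-2\real(\gamma)})$. Finally, if $p\mid N$ (only finitely many primes, but $\ord_p(N)$ possibly large), one reads off $Z_{1,p}$ in the same way, the parity of $\ord_p(k)$ now depending on whether $p\mid k_3$, and matches it against the --- possibly degenerate --- local factor of $f\otimes\chi_{k_3q_2}$ at $p$ (which is trivial whenever $\chi_{k_3q_2}$ is ramified there, and likewise for $f\otimes\chi_{k_1k_3q_2}$).

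In each case one then checks that
\[
Z_{1,p}^*\ :=\ Z_{1,p}\cdot\frac{L_p(1+2\alpha,\sym^2 f)}{L_p\!\left(1/2+\alpha,\ f_{k_3q_2}\otimes\chi_{k_1}\right)}\ =\ 1+O_\varepsilon\!\left(p^{-1-2\varepsilon}\right)
\]
uniformly in $q_1,q_2,k_1,k_3,\kappa,N$ for $\real(\alpha)\ge0$, $\real(\gamma)\ge1/2+\varepsilon$. The mechanism is: the term $1+\lambda_f(p)\chi_{k_1k_3q_2}(p)p^{-1/2-\alpha}$ of $Z_{1,p}$ agrees with the leading terms of $L_p(1/2+\alpha,f_{k_3q_2}\otimes\chi_{k_1})$, while the residual discrepancy --- at a generic prime this is exactly $(1-\lambda_f(p)^2)p^{-1-2\alpha}$, arising because $G_k(p^2)=0$ whereas $\lambda_f(p^2)\ne0$ --- is precisely what the local factor of $L(1+2\alpha,\sym^2 f)$ cancels; and the $k_2$-summation contributes only $O(p^{-2\real(\gamma)})=O(p^{-1-2\varepsilon})$. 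The point that makes this work even on the line $\real(\alpha)=0$ is that the $b=0$ part of $Z_{1,p}$ is a \emph{finite} sum (two or three terms), so there is no divergent geometric tail; the Deligne bound $|\lambda_f(p^a)|\le a+1$ together with $|G_k(p^a)|\le p^a$ (and $\le p^{\,\ord_p(k)+1/2}$ when $a=\ord_p(k)+1$) keeps all implied constants absolute. Assembling, $Z_1^*:=\prod_pZ_{1,p}^*$ converges absolutely and is bounded on the stated region, and $\prod_p$ of the complementary factors equals $L_{q_1q_2}(1/2+\alpha,f_{k_3q_2}\otimes\chi_{k_1})/L_{q_1q_2}(1+2\alpha,\sym^2 f)$ by construction; the unconditional non-vanishing of $L(1+2\alpha,\sym^2 f)$ for $\real(\alpha)\ge0$ makes the quotient holomorphic there.

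The step I expect to be the main obstacle is the bad-prime analysis at $p\mid N$ (including the sub-cases $p\mid k_3$, $p\mid q_2$, $p^2\mid N$): there one must match $Z_{1,p}$ against the local factor of a twisted newform, and the $\sym^2$-local factor, both of which can degenerate in several distinct ways according to the ramification of $f$ at $p$ and the conductors of $\chi_{k_3}$ and $\chi_{q_2}$, all while keeping the bound on $Z_{1,p}^*$ --- hence on $\prod_{p\mid N}Z_{1,p}^*$ --- completely uniform in $N$ (the danger being that both $\ord_p(N)$ and the number of such primes grow with $N$). The generic case and the $p\mid q_1q_2$ case are routine geometric-series bookkeeping once the structure above is in place.
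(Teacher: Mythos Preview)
Your proposal is correct and follows essentially the same approach as the paper: factor $Z_1$ as an Euler product via the joint multiplicativity in $(n,k_2)$ coming from Lemma~\ref{GaussSum}, then check case by case that each local factor $Z_{1,p}$ matches $L_p(1/2+\alpha,f_{k_3q_2}\otimes\chi_{k_1})/L_p(1+2\alpha,\sym^2 f)$ up to $1+O(p^{-1-2\varepsilon})$. The paper's own proof is in fact terser than yours on the bad primes $p\mid N$, so your caution there is misplaced as an ``obstacle'': since $Z_1$ (unlike $Z_N$ in Lemma~\ref{DirichletSeries}) does not carry $N$ inside the Gauss sum, the only way $N$ enters locally is through the Hecke relation $\lambda_f(p^2)=\lambda_f(p)^2$ and the bound $|\lambda_f(p)|\le 1$ at ramified primes; with these the local correction is still $1+O(p^{-1-2\varepsilon})$, and $\prod_{p\mid N}(1+O(p^{-1-2\varepsilon}))$ is bounded absolutely, independent of both the number of prime divisors of $N$ and their multiplicities.
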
 \begin{proof} By lemma \ref{GaussSum}, the terms of the Dirichlet series defining $Z$ are joint multiplicative in $n$ and $k_2$, so that we may decompose $Z$ as an Euler product.  The generic Euler factor is given by \[ \sum_{k_2,n \geq 0} \frac{\lambda_f(p^n)\chi_{q_2}(p)^n}{p^{n\alpha + 2\gamma k_2}}\frac{G_{k_1k_3p^{2k_2}}(p^n)}{p^n},\] and we must check the several cases where $p$ divides the various parameters $N,q_1,q_2,k_1,k_3,$ or not.  First, we consider the generic case where $p \nmid 2Nq_1q_2k_1k_3.$  By lemma \ref{GaussSum}, we find that the terms $k_2\geq 1$ contribute $\ll p^{-(1+2\varepsilon)},$ and the $k_2 = 0$ terms are exactly \[1+\frac{\lambda_f(p)\chi_{k_3q_2}(p)\chi_{k_1}(p)}{p^{1/2+\alpha}},\] so that these Euler factors match those in the statement of the lemma.  Next, consider the cases $p \mid k_1,$ $p\nmid 2Nq_1q_2k_3,$ or $p \mid k_3$, $ p \nmid 2q_1q_2k_1$.  In either of these two cases we check that such an Euler factor is \[1-\frac{\lambda_f(p^2)}{p^{1+2\alpha}} + O(p^{-(1+2\varepsilon)}),\] which again matches the Euler factor in the lemma.   If $p \mid N$, but $p \nmid 2q_1q_2k_1k_3$, then this Euler factor is \[1+\frac{\lambda_f(p)\chi_{k_3q_2}(p)\chi_{k_1}(p)}{p^{1/2+\alpha}} + O(p^{-(1+2\varepsilon)}).\]  Observing that $\lambda_f(p^2)=\lambda_f(p)^2$ for primes dividing the level, the also matches the Euler factor from the statement of the lemma.  Finally, if $p\mid q_1q_2,$ then all terms $n\geq 1$ vanish, and the contribution of such an Euler factor is $1+O(p^{-(1+2\varepsilon)})$. \end{proof} 

We now return to \eqref{like311}, and split the sum over $k_1$ at $U\kappa \sqrt{N}z Y_1^2Y_2^{1/2}/X$.  For the small $k_1$ terms, we shift the lines of integration to $\real(u) = -1/2+ 1/\log X \kappa N$ and $\real(s) = 3/4$, and for the large $k_1$ terms to $\real(u) = -1/2+1/\log X\kappa N$ and $\real(s) = 5/4$.  Recall that $H(s) \ll |s|^{\real(s)-1/2},$ and observe \[|L_{a_1a_2}(1/2+\alpha,f_{k_3q_2} \otimes \chi_{k_1})| \leq d(a_1)d(a_2) |L(1/2+\alpha,f_{k_3q_2} \otimes \chi_{k_1})|.\] Applying the result of Goldfeld, Hoffstein and Lieman \cite{HLAppendix}, the small $k_1$ terms are \begin{equation*}\begin{split} \ll (\log X\kappa N)^3 (U \kappa \sqrt{N}z)^{1/4} X^{1/4} \sum_{\substack{(a_1,2N)=1 \\ a_1 \leq Y_1}}\frac{d(a_1)}{\sqrt{a_1}} \sum_{\substack{a_2|N\\ a_2 \leq Y_2}} \frac{d(a_2)}{a_2^{1/4}} \sum_{\substack{k_3 \in \mathcal{D} \\ k_3 | N}} \frac{1}{|k_3|^{3/4}} \int_{(3/4)}\int_{(-1/2+1/\log X\kappa N)} \\ \times \sum_{\substack{|k_1|\leq  U\kappa \sqrt{N} zY_1^2Y_2^{1/2}/X \\ k_1\in \mathcal{D} \\ (k_1,N)=1  }} \frac{|L(1+u,f_{k_3a_2} \otimes \chi_{k_1})|}{|k_1|^{3/4}}\frac{ds \,du}{(1+|s|)^{98}(1+|u|)^{98}}.\end{split}\end{equation*}  Using conjecture \ref{conjecture2}, i.e. GRH, we find that this is $\ll (U\kappa \sqrt{N}z)^{1/2}  Y_1 Y_2^{1/8} (\log X \kappa N)^6.$  Now consider the large $k_1$ terms.  Similarly, their contribution is \begin{equation*}\begin{split} \ll (\log X\kappa N)^3 \frac{(U \kappa \sqrt{N}z)^{3/4}}{ X^{1/4}} \sum_{\substack{(a_1,2N)=1 \\ a_1 \leq Y_1}}d(a_1)\sqrt{a_1} \sum_{\substack{a_2|N\\ a_2 \leq Y_2}} d(a_2)a_2^{1/4} \sum_{\substack{k_3 \in \mathcal{D} \\ k_3 | N}} \frac{1}{|k_3|^{5/4}} \int_{(5/4)}\int_{(-1/2+1/\log X\kappa N)} \\ \times \sum_{\substack{|k_1|>  U\kappa \sqrt{N}z Y_1^2Y_2^{1/2}/X \\ k_1\in \mathcal{D} \\ (k_1,N)=1  }} \frac{|L(1+u,f_{k_3a_2} \otimes \chi_{k_1})|}{|k_1|^{5/4}}\frac{ds \,du}{(1+|s|)^{98}(1+|u|)^{98}}.\end{split}\end{equation*} Again, by conjecture \ref{conjecture2}, this is $\ll (U\kappa \sqrt{N}z)^{1/2} Y_1 Y_2^{1/8} (\log X \kappa N)^6.$   Taking\[ Y_1=Y_2= \frac{X^{8/17}}{(U\kappa \sqrt{N}z)^{4/17}}, \]  we find that \[T_3(1,h) \ll X^{9/17}(U\kappa \sqrt{N}z)^{4/17}\left(\log X \kappa N \right)^6,\] and drawing all error terms together we obtain the proposition for $N'=1$.  

\subsection{The $N'=N$ case}

The proof in the $T(N,h)$ case follows the same outline as in the $T(1,h)$ case, above.  We need only M\"{o}bius invert the squarefree condition and not the relatively prime to $N$ condition, but we must keep careful track of the dependence on $N$ in the analogue of lemma \ref{likeLemma331}.  We sketch the argument, omitting those details which are similar to those of $T(1,h)$.  

We begin by using M\"{o}bius inversion to remove the squarefree condition and split the resulting sum at $Y$.  \begin{eqnarray*} T(N,h)& = & \left(\sum_{\substack{a \leq Y \\ (a,2N)=1}} + \sum_{\substack{a > Y \\ (a,2N)=1}} \right) \mu(a) \sum_{(d,2N)=1} \sum_{(n,a)=1} \frac{\lambda_f(n)}{\sqrt{n}} \chi_{8d}(Nn)h(da^2,n) \\& =: & T_1(N,h) + T_2(N,h). \end{eqnarray*}  By a slight modification of lemma \ref{T_2}, we find that \[T_2(N,h) \ll \frac{X}{Y} (\log X\kappa N)^5, \] and so we concentrate on $T_1(N,h)$.  Applying Poisson summation (lemma \ref{PoissonSummation}), we have that  \[T_1(N,h) = \frac{X}{2} \sum_{\substack{a \leq Y \\ (a, 2N)=1}} \frac{\mu(a)}{a^2}  \sum_{k \in \Z} (-1)^k \sum_{(n,2a)=1} \frac{\lambda_f(n)}{\sqrt{n}}\frac{G_k( N n)}{Nn} \int_0^\infty \left(\cos + \sin \right)\left(\frac{2\pi k xX}{2Nna^2 }\right)h(xX,n)\,dx.\]  Now we pick out from $T_1(N',h)$ the main term, which is when $k=0$, and call it $T_{10}(N,h)$.  By pulling the sum over $a$ inside and computing as in Subsection \ref{mainterm}, we find that \[T_{10}(N,h) = \frac{4X}{\pi^2} \sum_{\substack{(n,2)=1 \\ Nn=\square}} \frac{\lambda_f(n)}{\sqrt{n}} \prod_{p|Nn} \frac{p}{p+1}h_1(n) + O\left( \frac{X}{Y} \left(\log X\kappa N\right)^3  \right).\]  

Now we turn to the $k \neq 0$ terms of $T_1(N,h)$ and call them $T_3(N,h).$  Define \begin{equation}\label{DirichletSeriesZN} Z_N(\alpha, \gamma, q, k_1k_3) := \sum_{k_2=1}^\infty \sum_{(n,2q)=1} \frac{\lambda_f(n)}{n^\alpha}\left(\frac{N}{|k_2|^2}\right)^\gamma \frac{G_{k_1k_2^2k_3}(Nn)}{Nn}.\end{equation} Recall the definition of $H(s)$ from \eqref{H} and apply the inversion formula \eqref{inversion} for the weight function, to find the analogue of formula \eqref{like311}: \begin{equation}\label{like311again}\begin{split} T_{3}(N,h)= \frac{1}{2} \sum_{\substack{a \leq Y \\ (a, 2N)=1}} \frac{\mu(a)}{a^2} \sum_{\substack{k_3\in\mathcal{D} \\ k_3 \mid N}}\sum_{\substack{k_1 \in \mathcal{D} \\ (k_1,N)=1}} (-1)^{k_1k_3}  \frac{1}{(2 \pi i)^2} \int_{(\varepsilon)}\int_{(1/2+\varepsilon)} \widetilde{h}(1-s,u+s) \left(\frac{a^2}{\pi |k_1k_3|}\right)^s \\ \times H(s) Z_{N}(1/2+u,s,a,k_1k_3) \,ds\,du.\end{split}\end{equation}  In order to use contour shifting, we analyze the Dirichlet series $Z_N$, taking special care with the dependence on $N$.  \begin{lemma}\label{DirichletSeries} Let $k_1k_3$ be a fundamental discriminant, where $k_3 \mid N$ but $(k_1,N)=1$, and $q$ positive integer relatively prime to $2N$.  Denote by $f_{k_3}$ the newform defined by the quadratic twist $f \otimes \chi_{k_3}$, which is of some level dividing $N^2$.  For the Dirichlet series defined by \eqref{DirichletSeriesZN} one has \[Z_{N}(\alpha, \gamma, q,k_1k_3)=\frac{L_{qN}(1/2+\alpha, f_{k_3}\otimes \chi_{k_1})}{L_{qN}(1+2\alpha,\sym^2 f)}Z_{N}^*(\alpha,\gamma, q,k_1k_3),\] where $Z_{N}^*\ll d(N) N^{\real(\gamma)-1/2},$ uniformly in $q, \kappa, k_1,k_3,\real(\gamma) >1/2+\varepsilon$, $\real(\alpha) \geq 0.$  \end{lemma}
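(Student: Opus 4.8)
The plan is to follow the proof of Lemma~\ref{likeLemma331} essentially line by line, the one genuinely new task being to keep every implied constant independent of $N$ at the primes dividing the level.

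First I would note, exactly as in Lemma~\ref{likeLemma331}, that by Lemma~\ref{GaussSum} the summands of \eqref{DirichletSeriesZN} are jointly multiplicative in $n$ and $k_2$, so that after pulling out the harmless explicit factor $N^{\gamma}$ one may write $Z_{N}(\alpha,\gamma,q,k_1k_3)=N^{\gamma}\prod_p\mathcal F_p$, where
\[
\mathcal F_p=\sum_{c\ge 0}\ \sum_{m\ge 0}\ \frac{\lambda_f(p^m)}{p^{m\alpha}}\,\frac{1}{p^{2c\gamma}}\,\frac{G_{k_1k_2^2k_3}\!\bigl(p^{\,e_p+m}\bigr)}{p^{\,e_p+m}},\qquad e_p:=\ord_p(N),
\]
$m=0$ being forced when $p\mid q$, and $\ord_p(k_1k_2^2k_3)=2c+\ord_p(k_3)$ whenever $p\nmid k_1$ (in particular whenever $p\mid N$). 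For $p\nmid 2Nqk_1$ the analysis is verbatim that of Lemma~\ref{likeLemma331}: the $c=0$ terms reproduce the generic Euler factor of $L(1/2+\alpha,f_{k_3}\otimes\chi_{k_1})/L(1+2\alpha,\sym^2 f)$ (the symbol $\chi_{k_1k_3}(p)$ entering through the Legendre symbol in the evaluation of $G_{k_1k_2^2k_3}(p)$ in Lemma~\ref{GaussSum}), while the $c\ge 1$ terms contribute $O(p^{-2\real(\gamma)})=O(p^{-1-\varepsilon})$; for $p\mid q$ with $p\nmid N$ one gets $1+O(p^{-1-\varepsilon})$, and for $p\mid k_1$ with $p\nmid N$ one gets $1-\lambda_f(p^2)p^{-1-2\alpha}+O(p^{-1-\varepsilon})$, which matches the omitted symmetric-square factor. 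Hence, after dividing $Z_N$ by the stated $L$-ratio, the contribution of all $p\nmid N$ is $\prod_{p\nmid N}\bigl(1+O(p^{-1-\varepsilon})\bigr)$; this converges absolutely and is uniformly bounded in $q,\kappa,k_1,k_3$ and in $\real(\alpha)\ge 0$, $\real(\gamma)>1/2+\varepsilon$, the sub-products over $p\mid q$ and over $p\mid k_1$ being majorized by $\prod_p(1+Cp^{-1-\varepsilon})$.

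The crux, and the step I expect to require the most care, is the behaviour at $p\mid N$, where the argument of the Gauss sum is $p^{\,e_p+m}$ rather than $p^m$. Here Lemma~\ref{GaussSum} does the essential work: $G_{k_1k_2^2k_3}(p^{\beta})$ vanishes unless $\beta\le\ord_p(k_1k_2^2k_3)+1$, which forces $e_p+m\le 2c+\ord_p(k_3)+1$, hence $c\ge\tfrac12\bigl(e_p+m-\ord_p(k_3)-1\bigr)$. Feeding this lower bound on $c$ into $p^{-2c\gamma}$, combining it with $p^{-(e_p+m)}$ and with the factor $p^{e_p\gamma}$ that is the $p$-part of $N^{\gamma}$, and using $|\lambda_f(p^m)|\le p^{-m/2}$ (valid since $N$ is the level of $f$), a short case analysis on the parities of $e_p$ and of $\ord_p(k_3)$ shows that the $p$-local factor of $Z_N^*$, namely $p^{e_p\gamma}\mathcal F_p$, satisfies
\[
\bigl|p^{e_p\gamma}\mathcal F_p\bigr|\ \ll\ p^{\,e_p(\real(\gamma)-1/2)}
\]
with an absolute implied constant, uniformly in the remaining parameters. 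In the case $p\nmid k_3$, $e_p$ odd, this factor equals $\chi_{k_1k_3}(p)\,p^{\gamma-1/2}\bigl(1+O(p^{-1-\varepsilon})\bigr)$, in line with the squarefree heuristic; the delicate case is $p\mid k_3$ with $e_p$ even, where the leading term is $-p^{2\gamma-1}\bigl(1+O(p^{-1-\varepsilon})\bigr)$, absorbed because $e_p\ge 2$ forces $p^{e_p(\real(\gamma)-1/2)}\ge p^{2\real(\gamma)-1}$; the two remaining parity cases each contribute a factor $\ll 1$.

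Multiplying over $p\mid N$ (no cancellation occurs here, the omitted $L$-factors all being supported on primes not dividing $N$) then gives
\[
\prod_{p\mid N}\bigl|p^{e_p\gamma}\mathcal F_p\bigr|\ \ll\ 2^{\omega(N)}\prod_{p\mid N}p^{\,e_p(\real(\gamma)-1/2)}\ =\ 2^{\omega(N)}\,N^{\real(\gamma)-1/2}\ \le\ d(N)\,N^{\real(\gamma)-1/2},
\]
and combining with the uniformly bounded product over $p\nmid N$ yields $Z_N^*(\alpha,\gamma,q,k_1k_3)\ll d(N)\,N^{\real(\gamma)-1/2}$, uniformly in $q,\kappa,k_1,k_3$ and in $\real(\alpha)\ge 0$, $\real(\gamma)>1/2+\varepsilon$. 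Absolute convergence of the Euler product defining $Z_N^*$ in that region is then immediate, since every factor outside the finite set of primes dividing $2Nqk_1$ is of the form $1+O(p^{-1-\varepsilon})$. The only real subtlety is the bookkeeping in the $p\mid N$ case analysis; no analytic input beyond Lemma~\ref{GaussSum} and the Deligne bound is needed.
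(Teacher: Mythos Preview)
Your proposal is correct and follows essentially the same approach as the paper: both establish the Euler product via joint multiplicativity from Lemma~\ref{GaussSum}, handle the primes $p\nmid N$ exactly as in Lemma~\ref{likeLemma331}, and then carry out the same four-way case split at $p\mid N$ (on the parity of $\ord_p(N)$ and whether $p\mid k_3$) to bound the local factor by $p^{\,e_p(\real(\gamma)-1/2)}$. One small imprecision: your claim that the two ``remaining'' parity cases are $\ll 1$ is not quite uniform in $\real(\gamma)$ for the case $e_p$ odd, $p\mid k_3$ (where the leading term is $\asymp |\lambda_f(p)|\,p^{\real(\gamma)-1}$), but your stated overall bound $|p^{e_p\gamma}\mathcal F_p|\ll p^{\,e_p(\real(\gamma)-1/2)}$ still holds there, so the argument goes through.
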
 \begin{proof}  From lemma \ref{GaussSum} we see that the summand is within a constant of being jointly multiplicative in $n,k_2$, so that we may write an Euler product.  We use the notation $p^r || N$ to mean that $r$ is the largest power of $p$ dividing $N$.  Then $Z_{N}$ is given by \begin{equation}\label{prod}\prod_{p \nmid 2N} \sum_{k_2,n \geq 0} \frac{\lambda_f(p^n)}{p^{n\alpha+2\gamma k_2}} \frac{G_{k_1k_3p^{2k_2}}(p^n)}{p^n} \prod_{p^r || N}\sum_{k_2,n \geq 0} \frac{\lambda_f(p^n)}{p^{n\alpha + 2 \gamma k_2-r\gamma}} \frac{G_{k_1k_3p^{2k_2}}(p^{r+n})}{p^{r+n}}.\end{equation} We must check all possible cases when $p$ does or does not divide the parameters $N,q,k_1$ and $k_3$.  Let us begin with the generic $p\nmid N$.  Suppose first that $p\nmid 2qk_1$.  We have that all of the terms where $k_2\geq 1$ contribute $\ll p^{-(1+2\varepsilon)},$ uniformly in all parameters.  The $k_2=0$ terms are exactly \[1+ \lambda_f(p) \chi_{k_1k_3}(p) p^{-(1/2+\alpha)},\] which matches the proposed Euler factor in the statement of the lemma up to a uniformly bounded factor.  Now we consider the terms with $p \nmid 2q$ but $p \mid k_1$.  In this case, the Euler factor is given by \[1-\frac{\lambda_f(p^2)}{p^{1+2\alpha}} + O\left( \frac{1}{p^{1+2\varepsilon}}\right),\] which exactly matches the Euler factor in the statement of the lemma up to a uniformly bounded factor in $N,\kappa, k_1$.  If $p \mid 2q$, then the Euler factor is $1+O(p^{-(1+2\varepsilon)}).$  

Now we turn to the terms where $p \mid N$.  Inspecting lemma \ref{GaussSum} we find four cases depending on whether $p$ divides $N$ to even or odd order and whether $p \mid k_3$ or not.   When $r$ is odd the second product of \eqref{prod} is \[\prod_{\substack{ p^r || N \\ r \text{ odd} \\ p \nmid k_3}} \left( \chi_{k_1k_3}(p) p^{\gamma-1/2} + \frac{\lambda_f(p)}{p^{\alpha+\gamma}}+O(p^{-(1+\varepsilon)})\right)\prod_{\substack{p^r || N \\ r \text{ odd} \\ p\mid k_3}} \left(- \frac{\lambda_f(p) }{p^{ \alpha}}p^{\gamma -1}+\frac{\lambda_f(p)}{p^{\alpha+\gamma}}+ O(p^{-(1+2\varepsilon)})\right),\] and when $r$ is even this product is \[\prod_{\substack{p^r || N \\ r \text{ even} \\ p \nmid k_3}} \left(1+\frac{\lambda_f(p)\chi_{k_1k_3}(p)}{p^{\alpha+1/2}}+O(p^{-(1+2\varepsilon)})\right) \prod_{\substack{ p^r || N \\ r \text{ even} \\ p \mid k_3}} \left( -p^{2\gamma -1} + 1 -\frac{1}{p} -\frac{\lambda_f(p^2)}{p^{2\alpha+1}}+ O(p^{-(1+2\varepsilon)})\right).\]  If $r$ is even then $r\geq 2$, so we have that $Z_{N}^* \ll d(N)N^{\real(\gamma)-1/2}.$ 
\end{proof}

Now we return to $T_3(N,h)$, and split the sum over $k_1$ at $U\kappa N^{3/2}zY^2/X$.  When $|k_1| \leq U\kappa N^{3/2}zY^2/X$, shift the lines of integration to $\real(s) = 3/4$ and $\real(u) = -1/2 + 1/\log X\kappa N$, and for the tail $k_1$, shift to $\real(s) = 5/4$ and $\real(u) = -1/2+1/\log X\kappa N$.  We have that \begin{eqnarray*} Z_{N}(1/2+u,s,a,k_1) & \ll & |L_{aN}(1+u,f_{k_3}\otimes \chi_{k_1})| (\log X\kappa N)^2 N^{\real(\gamma)-1/2} \\ & \ll & (\log X\kappa N)^3 \prod_{p|a}\left(1+\frac{10}{\sqrt{p}}\right) |L(1+u,f_{k_3} \otimes \chi_{k_1})| N^{\real(\gamma)-1/2}\end{eqnarray*} unconditionally due to the work of Goldfeld, Hoffstein and Lieman \cite{HLAppendix}.  We also have the estimate $H(s)\ll |s|^{\real(s)-1/2},$ so that the small $k_1$ of $T_3(N,h)$ are \begin{equation*}\label{313}\begin{split} \ll (XU \kappa \sqrt{N}z)^{1/4}N^{1/4}(\log X\kappa N)^5 \sum_{a\leq Y} \frac{1}{\sqrt{a}} \prod_{p|a}\left(1+\frac{10}{\sqrt{p}}\right)  \sum_{\substack{k_3 \in \mathcal{D} \\ k_3 | N}} \frac{1}{|k_3|^{3/4}}\int_{(3/4)} \int_{(-1/2+1/\log X\kappa N)}\\ \times \sum_{\substack{|k_1| \leq U\kappa N^{3/2}zY^2/X \\ k_1\in \mathcal{D} \\ (k_1,N)=1}} \frac{1}{|k_1|^{3/4}} |L(1+u,f_{k_3}\otimes \chi_{k_1})| \frac{du\,ds}{(1+|s|)^{98}(1+|u|)^{98}}.   \end{split}\end{equation*}  We have that by conjecture \ref{conjecture2} this is $\ll (U\kappa N^{3/2}z)^{1/2}Y (\log X\kappa N)^6$.  Similarly the tail $k_1$ terms are \begin{equation*}\label{314}\begin{split}\ll (U\kappa \sqrt{N}z)^{3/4} X^{-1/4} N^{3/4}(\log X\kappa N)^5 \sum_{a \leq Y}\sqrt{a} \prod_{p|a}\left(1+\frac{10}{\sqrt{p}}\right)  \sum_{\substack{k_3 \in \mathcal{D} \\ k_3 | N}} \frac{1}{|k_3|^{5/4}}  \int_{(5/4)} \int_{(-1/2+1/\log X\kappa N)}\\ \times \sum_{\substack{|k_1| > U\kappa N^{3/2}zY^2/X \\ k_1\in \mathcal{D} \\ (k_1,N)=1 }} \frac{1}{|k_1|^{5/4}} |L(1+u,f_{k_3}\otimes \chi_{k_1})| \frac{du\,ds}{(1+|s|)^{98}(1+|u|)^{98}},   \end{split}\end{equation*} which is $\ll  (U\kappa N^{3/2}z)^{1/2}Y (\log X\kappa N)^6$ as well by conjecture \ref{conjecture2}.  Taking $Y=X^{1/2}/(U\kappa N^{3/2})^{1/4},$ we find \[T_3(N,h) \ll X^{1/2} (U \kappa N^{3/2}z)^{1/4} (\log X\kappa N)^6.\] 

\end{proof}

\end{document}